\documentclass[12pt]{amsart}
\usepackage{amssymb}
\usepackage{amsfonts}
\usepackage{amsmath,amsthm,amsfonts,amscd,amssymb,mathdots,amstext}
\usepackage{fullpage}
\usepackage[all]{xy}
 \usepackage[usenames,dvipsnames]{color}

\usepackage{graphicx}
\usepackage{bbding}
\usepackage[misc]{ifsym}

\usepackage[margin=3.65cm]{geometry}

\theoremstyle{plain}
\newtheorem{thm}{Theorem}[section]
\newtheorem{cor}[thm]{Corollary}
\newtheorem{prop}[thm]{Proposition}
\newtheorem{lem}[thm]{Lemma}

\newcounter{theoremintro}
\newtheorem{theoremi}[theoremintro]{Theorem}

\theoremstyle{definition}
\newtheorem{rem}[thm]{Remark}

\newtheorem{defn}[thm]{Definition}

\newtheorem{eg}[thm]{Example}

\newtheorem{problem}[thm]{Problem}

\newcommand{\bD}{{\mathbb{D}}}

\newcommand{\bT}{{\mathbb{T}}}

  \newcommand{\M}{{\mathcal{M}}}
  \newcommand{\N}{{\mathcal{N}}}

  \newcommand{\T}{{\mathcal{T}}}

\renewcommand{\phi}{\varphi}
\newcommand{\upchi}{{\raise.35ex\hbox{$\chi$}}}

\newcommand{\ran}{\operatorname{ran}}

\newcommand{\ol}{\overline}

\newcommand{\td}{\widetilde}

\newcommand{\Dbar}{\overline{\mathbb{D}}}

\usepackage[hidelinks]{hyperref}
\hypersetup{
  pdftitle={The invariant subspace problem and Rosenblum operators II},
  pdfauthor={Junsheng Fang, Bingzhe Hou, Chunlan Jiang, Yuanhang Zhang},
  pdfkeywords={Invariant subspace problem, Rosenblum operators, ideal property, Pearcy's problem, Halmos's third problem, conjugate bi-geometric operators}
}
\renewcommand{\H}{{\mathcal{H}}}
\begin{document}

\title[The invariant subspace problem and Rosenblum operators]{The invariant subspace problem and Rosenblum operators II}

\author{Junsheng Fang}
\address{Junsheng Fang, School of Mathematics, Hebei Normal University, 050016, Shijiazhuang, P. R. China}
\email{jfang@hebtu.edu.cn}

\author{Bingzhe Hou}
\address{Bingzhe Hou, School of Mathematics, Jilin University, 130012, Changchun, P. R. China}
\email{houbz@jlu.edu.cn}
	
\author{Chunlan Jiang}
\address{Chunlan Jiang, School of Mathematics, Hebei Normal University, 050016, Shijiazhuang, P. R. China}
\email{cljiang@hebtu.edu.cn}

\author{Yuanhang Zhang}
\address{Yuanhang Zhang, School of Mathematics, Jilin University, 130012, Changchun, P. R. China}
\email{zhangyuanhang@jlu.edu.cn}

\thanks{Junsheng Fang is supported in part by National Natural Science Foundation of China (No.: 12071109); Chunlan Jiang and Bingzhe Hou are supported in part by National Natural Science Foundation of China (No.:12471120); Chunlan Jiang and Junsheng
Fang are partly supported by the Hebei Natural Science Foundation (No.: A2023205045); Yuanhang Zhang is supported in part by National Natural Science Foundation of China (No.: 12471123).
}
	
\date{}
\subjclass[2010]{Primary 47A15; Secondary 47B47, 47B32, 47B36.}
\keywords{Invariant subspace problem, Rosenblum operators, ideal property, Pearcy's problem, Halmos's third problem, conjugate bi-geometric operators.}
\thanks{}
\begin{abstract}
Let $\H$ be the separable, infinite-dimensional complex Hilbert space. In the first paper of this series, we introduced, by means of the Rosenblum operators, the shift representation operators $K_x=\sum_{n=0}^{\infty}T^nx\otimes e_n$, which relate an operator $T$ with $r(T)<1$ to the unilateral shift $S$ and reveal its hidden analytic structure. In this paper we replace the pair $(H^2,S)$ by the Sobolev disk algebra $R(\bD)$ and the multiplication operator $M_z$, whose canonical left inverse $B$ plays the role of $S^*$, and study the two-parameter family $K_{x,y}=\sum_{n=0}^{\infty}T^nx\otimes B^{*n}y$. We prove that $T\in B(R(\bD))$ with $r(T)<1$ is intransitive whenever there exist a nonzero vector $g$ and a nontrivial $M_z$-invariant manifold $\M$ such that $f(T)g$ has a zero in the closed unit disk for every $f\in\M$; the proof explicitly constructs a nontrivial invariant subspace of $T$. We further introduce the ideal property and show that every intransitive operator on $\H$ is unitarily equivalent to an operator on $R(\bD)$ with this property, so that the Invariant Subspace Problem for arbitrary operators reduces to a problem about a concrete function algebra. Applications include a characterization of intransitivity through the rationality of the generating function $\sum_{n=0}^{\infty}\langle T^n\xi,\eta\rangle z^n$ and new results on Pearcy's problem. Our main application is Halmos's third problem, which has remained open for more than fifty years: we reduce it to invertible operators with conjugate bi-geometric form, and we prove that $T^{-1}$ is intransitive for every invertible $T$ admitting a nontrivial projection $P$ with $PTP=TP$ and $PT(I-P)$ of rank one. Finally, we exhibit tridiagonal operators that are intransitive and admit infinite decreasing chains of invariant subspaces.
\end{abstract}

\date{\today}

\maketitle

Let $\H$ denote the unique (up to isomorphism) separable, infinite-dimensional complex Hilbert space. A {\it subspace} of $\H$ is a linear manifold which is closed in the norm topology. If $\mathcal{L}\subset \H$, then the {\it span} of $\mathcal{L}$, denoted by $\bigvee\mathcal{L}$, is the intersection of all subspaces containing $\mathcal{L}$. We say that the subspace $\M$ is invariant under the operator $T$ if $T\M\subset \M$. Denote by $B(\H)$ the set of all bounded linear operators acting on $\H$, and for $T\in B(\H)$ denote by $\textup{Lat}T$ the collection of all subspaces of $\H$ invariant under $T$; $T$ is called {\it intransitive} if it leaves invariant subspaces other than $\{0\}$ or $\H$; otherwise, it is called {\it transitive}, which means that $\textup{Lat}T=\{\{0\},\H\}$. Denote by $K(\H)$ the closed ideal of $B(\H)$ of all compact operators. Denote by $\ker T$ and $\ran T$ the kernel and the range of $T\in B(\H)$, and by $\sigma(T),\sigma_p(T),r(T)$ its spectrum, point spectrum and spectral radius.

Given $f,g\in\H$, define the rank one operator $f\otimes g$ on $\H$ by $(f\otimes g)h=\langle h,g\rangle f$, $h\in\H$. For $T\in B(\H)$, we write $\{T\}'=\{A\in B(\H):TA=AT\}$ for the {\it commutant} of $T$, a weakly closed subalgebra with identity. For $A,B\in B(\H)$, the {\bf Rosenblum operator} is defined by $\tau_{A,B}(X)=AX-XB$, $X\in B(\H)$. The operator $\tau_{A,B}$ was first studied systematically by Rosenblum \cite{Ros}; later, many scholars obtained abundant results, and we refer to \cite{DavRos74,Fia78,Fia79,BhaRos97} and the references therein for more details.

Let $\bT$ be the unit circle with the normalized Lebesgue measure $m$. For $p=2,\infty$, let $H^p$ be the classical Hardy space of all functions in $L^p(\bT)$ with analytic extensions to the open disk $\bD$. Let $\{e_n\}_{n=0}^\infty$ to denote the canonical orthonormal basis $\{z^n\}_{n=0}^\infty$ of $H^2$ and $S$ be the {\it unilateral shift of multiplicity $1$}, that is, the unique operator with $Se_n=e_{n+1}$ for $n=0,1,2,\cdots$. Up to Hilbert space isomorphism, $\H$, $H^2$ and $\ell^2$ are the same; there is no harm in letting $S$ also denote the unilateral shift in $B(\H)$ or $B(\ell^2)$.

The \textbf{Invariant Subspace Problem} is one of the best-known unsolved problems in functional analysis: does every bounded operator on $\H$ have a nontrivial closed invariant subspace? In the 1930s, von Neumann found a beautiful technique for producing invariant subspaces of compact operators, which Aronszajn and Smith \cite{AS} extended to Banach spaces and Lomonosov \cite{Lom} developed into the statement that every operator commuting with a nonzero compact operator has a nontrivial hyperinvariant subspace. The answer is negative for general Banach spaces, by the examples of Enflo \cite{En} and Read \cite{R1,R2}; for operators on $\H$ the problem remains open, although positive results are known for many classes, such as hyponormal operators with thick spectra \cite{Brown87}. We refer the reader to \cite{RR2} and \cite{CP} for comprehensive accounts, and to \cite{RR-tri} for the theory of simultaneous triangularization.

The unilateral shift $S$ has many remarkable properties; for instance, by the celebrated Beurling theorem \cite{Beu48}, every nontrivial invariant subspace of $S$ is of the form $\phi H^2$ for some inner function $\phi$. In the first paper of this series \cite{FHJZ1}, we related an operator $T\in B(H^2)$ with $r(T)<1$ to $S$ through the Rosenblum operator $\tau_{T,S}$ and introduced the \emph{shift representation operators}
\[K_x=\sum_{n=0}^{\infty}T^nx\otimes e_n,\qquad x\in H^2.\]
The map $x\mapsto K_x$ is a one-to-one map from $H^2$ onto $\ker\tau_{T,S}$. Exploiting the analytic and geometric structure encoded in this kernel, \cite{FHJZ1} obtained an asymmetric weakening of the growth conditions in the Gelfand-Hille and Wermer theorems, an equivalent of the Invariant Subspace Problem via the injectivity of certain Hankel operators, and new results on cyclic vectors.

The present paper continues this program in a different direction: we extend the shift representation technique beyond the Hardy space setting and apply the resulting machinery to several long-standing problems in operator theory. The construction above uses only that $S$ has a left inverse, namely $S^*$; we therefore replace $(H^2,S)$ by $(R(\bD),M_z)$, where $R(\bD)$ is the Sobolev disk algebra and $M_z$ is multiplication by the coordinate function (see Section \ref{S:prelim}). This replacement is advantageous in three respects. First, the functions in $R(\bD)$ are holomorphic on $\bD$ and extend continuously to $\Dbar$, so point evaluations at points of $\Dbar$ are available---a feature that $H^2$ does not possess. Second, $M_z$ is an essentially normal weighted shift, strictly cyclic and indeed strongly strictly cyclic (Propositions \ref{P:R(D)} and \ref{P:Mz is strongly cyclic}); moreover, a subspace of $R(\bD)$ is invariant under $M_z$ if and only if it is an ideal, and the functions in every proper such subspace have a common zero in $\Dbar$ (Proposition \ref{P:Shields1}). Third, $M_z$ admits a canonical left inverse $B$, defined, on the orthonormal basis $\{e_n\}_{n=0}^{\infty}$ of $R(\bD)$ given by Proposition \ref{P:R(D)} (so $e_n=\beta_nz^n$), by $Be_0=0$ and $Be_{n+1}=\frac{\beta_{n+1}}{\beta_n}e_n$; in particular, $B{\mathbf 1}=0$, with ${\mathbf 1}$ the constant function $1$.

These facts suggest the following two-parameter generalization of the shift representation operators. For $A,B,T\in B(\H)$ with $BA=I$ and $r(T)r(B)<1$, and $x,y\in\H$, the series
\[K_{x,y}:=\sum_{n=0}^{\infty}T^nx\otimes B^{*n}y\]
converges in norm, so that $K_{x,y}\in B(\H)$ (Lemma \ref{L:bounded}); for $B=S^*$ and $y=e_0$ it reduces to $K_x$.

Section \ref{S:prelim} collects the basic properties of the operators $K_{x,y}$ and the preliminaries on the Sobolev disk algebra $R(\bD)$. Our starting point is the criterion that $T$ is intransitive whenever $0\neq\ol{K_{x,y}(\N)}\subsetneq\H$ for some invariant linear manifold $\N$ of $A$ (Lemma \ref{L:k_{x,y}}); applied with $A=M_z$, $B$ the canonical left inverse of $M_z$ and $y={\mathbf 1}$, it yields the first of our main theorems, stated below as Theorem \ref{A}.

\begin{theoremi}\label{A}
Let $\H=R(\bD)$, and let $T\in B(\H)$ with $r(T)<1$. Suppose that there exist a nonzero vector $g\in\H$ and a nontrivial invariant linear manifold $\M$ of $M_z$ such that for every $f\in\M$, the zero set
\[Z(f(T)g):=\{z_0\in\Dbar:[f(T)g](z_0)=0\}\]
is nonempty. Then $T$ is intransitive.
\end{theoremi}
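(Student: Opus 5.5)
The plan is to apply Lemma \ref{L:k_{x,y}} with $A=M_z$, $B$ the canonical left inverse of $M_z$, $x=g$, $y={\mathbf 1}$ and $\N=\M$. The criterion requires $r(T)r(B)<1$; I will check that $r(B)=1$ from the weights $\beta_{n+1}/\beta_n$ of Proposition \ref{P:R(D)}, or at least that $r(B)\le 1$, so that $K_{g,{\mathbf 1}}$ is a bounded operator by Lemma \ref{L:bounded}.

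\textbf{Step 1: compute $K_{g,{\mathbf 1}}$ explicitly.} For $f=\sum_k a_k z^k\in R(\bD)$ we have
\[K_{g,{\mathbf 1}}f=\sum_{n\ge 0}\langle f,B^{*n}{\mathbf 1}\rangle T^ng=\sum_{n\ge0}\langle B^nf,{\mathbf 1}\rangle T^ng.\]
From $Be_{n+1}=\frac{\beta_{n+1}}{\beta_n}e_n$ and $e_n=\beta_nz^n$ we get $Bz^{n+1}=z^n$. Hence $B^nf$ has constant term $a_n$, and since ${\mathbf 1}$ is orthogonal to $e_k$ for $k\ge1$, $\langle B^nf,{\mathbf 1}\rangle=a_n\|{\mathbf 1}\|^2$. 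Therefore
\[K_{g,{\mathbf 1}}f=\|{\mathbf 1}\|^2\sum_n a_nT^ng=\|{\mathbf 1}\|^2 f(T)g.\]
The series converges because $r(T)<1$ and the Taylor coefficients of $f\in R(\bD)$ are controlled by the norm. So $K_{g,{\mathbf 1}}(\M)=\{f(T)g:f\in\M\}$. As a side check, $K_{g,{\mathbf 1}}M_z=TK_{g,{\mathbf 1}}$, so the closure $\mathcal{L}:=\ol{K_{g,{\mathbf 1}}(\M)}$ is $T$-invariant directly, independently of the lemma.

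\textbf{Step 2: $\mathcal{L}\neq\H$.} Suppose $\mathcal{L}=\H$. Then ${\mathbf 1}$ is a norm limit of vectors $h_k=f_k(T)g$ with $f_k\in\M$. Since $R(\bD)$ embeds continuously into the disk algebra (Section \ref{S:prelim}), $h_k\to{\mathbf 1}$ uniformly on $\Dbar$. By hypothesis each $h_k$ has a zero $z_k\in\Dbar$, so
\[1=|{\mathbf 1}(z_k)-h_k(z_k)|\le\|h_k-{\mathbf 1}\|_\infty\to0,\]
a contradiction.

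\textbf{Step 3: $\mathcal{L}\neq\{0\}$.} This is the step I expect to need a separate argument. If $f(T)g=0$ for every $f\in\M$, I fix a nonzero $f\in\M$ and consider $\mathcal{K}=\bigvee\{T^ng\}$, which is a nonzero $T$-invariant subspace.
\begin{itemize}
\item If $\mathcal{K}\neq\H$, then $T$ is intransitive and we are done.
\item If $\mathcal{K}=\H$, then $f(T)$ commutes with $T$ and kills the cyclic vector $g$, so $f(T)=0$.
\end{itemize}
In the second case, $f$ is holomorphic and not identically zero on a neighbourhood of $\sigma(T)\subset\bD$. The spectral mapping theorem then forces $\sigma(T)$ to lie in the finite zero set of $f$ near $\sigma(T)$. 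Factoring $f=p\,u$ with $p$ a polynomial and $u$ invertible on a neighbourhood of $\sigma(T)$ gives $p(T)=0$. So $T$ is algebraic, has an eigenvalue, and is therefore intransitive on the infinite-dimensional space $\H$.

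In all remaining cases, $\mathcal{L}$ is a nontrivial invariant subspace of $T$, as required.
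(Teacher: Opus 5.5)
Your proof is correct. Its skeleton is the paper's: the same operator $K_{g,\mathbf 1}$ with $A=M_z$ and $B$ the canonical left inverse, the same identity $K_{g,\mathbf 1}f=\|\mathbf 1\|^2f(T)g$, and then the two checks that the closure is nonzero and proper. Your computation via $Bz^{n+1}=z^n$ is equivalent to the paper's inner-product computation. The difference lies in how you carry out the two checks.

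\textbf{Properness.} The paper approximates $\varphi_n=f_n(T)g$ by polynomials $p_n$ vanishing at $\lambda_n$ (Lemma \ref{L:polynomial}). It then uses norm equivalence to get $M_{p_n}\to I$ and tests against unit vectors $h_n\in\ker(M_z^*-\overline{\lambda_n})$. Since $\langle M_ph_n,h_n\rangle=p(\lambda_n)$, this is a disguised form of the uniform bound $|\varphi(\lambda)|\le C\|\varphi\|$ for $\lambda\in\Dbar$. You take that bound directly from the Sobolev embedding, so your argument is shorter and loses nothing.

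\textbf{Nonvanishing.} The paper transports $T$ to $H^2$ and cites \cite[Proposition 1.7]{FHJZ1}: a non-injective shift representation operator forces an eigenvalue. You instead give a self-contained spectral-mapping and factorization argument, essentially the one the paper itself uses in Step 2 of Proposition \ref{P:ideal property v.s. Banach algebra}.

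Your case split on whether $g$ is cyclic is harmless but unnecessary. From $f(T)g=0$ with $0\ne f\in\M$, write $f=pu$, where $p$ is a polynomial carrying the finitely many zeros of $f$ in $\{|z|\le r(T)\}$. Then $u(T)$ is invertible, so $p(T)g=0$. Since $g\neq0$, $p$ is nonconstant, and some factor $T-w_i$ has nontrivial kernel.

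What each route buys: the paper's keeps the argument inside the series' own machinery (Rosenblum kernels and the shift-representation results of part I). Yours needs nothing beyond the continuous embedding $R(\bD)\hookrightarrow C(\Dbar)$ and the Riesz functional calculus.
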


The proof of Theorem \ref{A} explicitly constructs a nontrivial invariant subspace of $T$, namely $\overline{K_{g,{\mathbf 1}}(\M)}$ (Remark \ref{R:main theorem}).

In Section \ref{S:ideal} we distill from Theorem \ref{A} the \emph{ideal property} of operators on $R(\bD)$ (Definition \ref{D:ideal property}); Theorem \ref{T:operator with i.p. is intransitive} shows that every operator with this property is intransitive, and the notion turns out to be universal.

\begin{theoremi}\label{B}
Every intransitive operator $T\in B(\H)$ is unitarily equivalent to an operator $\td T\in B(R(\bD))$ which has the ideal property.
\end{theoremi}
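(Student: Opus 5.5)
The plan is to transport a given nontrivial invariant subspace of $T$ into a subspace of $R(\bD)$ on which a single point evaluation vanishes identically. That single evaluation then supplies the required common zero. I take Definition \ref{D:ideal property} to mean, roughly: there are a nonzero $g$ and a nontrivial $M_z$-invariant manifold $\M$ such that $f(\td T)g$ has a zero in $\Dbar$ for every $f\in\M$. I have not seen the exact wording of the definition, so the final check below depends on this reading. I aim for the stronger statement that all these zero sets contain the single point $0$.

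\textbf{Step 1.} Let $T$ be intransitive and pick $\mathcal L\in\textup{Lat}\,T$ with $\{0\}\neq\mathcal L\neq\H$. Point evaluation at $0$ is a bounded functional on $R(\bD)$, since $h(0)=\langle h,e_0\rangle/\ol{\beta_0}$ in the orthonormal basis $e_n=\beta_nz^n$ of Proposition \ref{P:R(D)}. Its kernel is
\[
\H_0:=\bigvee\{e_n:n\geq 1\}=\{h\in R(\bD):h(0)=0\}.
\]
Choose a unit vector $u\in\mathcal L^\perp$, which exists because $\mathcal L\neq\H$. The space $\{u\}^\perp=\mathcal L\oplus(\mathcal L^\perp\ominus\mathbb C u)$ is separable and infinite-dimensional. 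Hence there is a unitary $U:\H\to R(\bD)$ with $Uu=e_0$ and $U(\{u\}^\perp)=\H_0$. In particular $U\mathcal L\subset\H_0$.

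\textbf{Step 2.} Set $\td T=UTU^*$, so that $U\mathcal L\in\textup{Lat}\,\td T$. Put $g=Ux$ for some nonzero $x\in\mathcal L$. For any $f$ for which $f(\td T)$ is defined, $f(\td T)$ is a norm limit of polynomials in $\td T$. Each such polynomial maps $g$ into $U\mathcal L$, and $U\mathcal L$ is closed. Therefore
\[
f(\td T)g\in U\mathcal L\subset\H_0,
\]
so $[f(\td T)g](0)=0$ and $0\in Z(f(\td T)g)$. This holds for every admissible $f$, so any nontrivial $M_z$-invariant manifold $\M$ works. The simplest choice is $\M=zR(\bD)$, or the polynomials vanishing at $0$. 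By Proposition \ref{P:Shields1} these are proper ideals.

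\textbf{Step 3 (main obstacle).} The remaining work is to match the precise requirements of Definition \ref{D:ideal property}, especially any spectral radius condition such as $r(\td T)<1$ that is needed for $f(\td T)$ with $f\in R(\bD)$. Unitary equivalence cannot change $r(T)$. So if the definition builds in $r(\td T)<1$, I would check whether it is formulated through polynomials or through the invariant manifold generated by $g$. In either of those formulations the argument of Step 2 goes through unchanged, because it only uses that $\td T$ leaves $U\mathcal L$ invariant, not the spectrum.

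If instead the definition genuinely needs the $R(\bD)$-functional calculus, I would first replace $T$ by $T/(2r(T)+1)$, which has the same lattice, and note where the statement permits this rescaling. I expect this bookkeeping, rather than the construction of $U$, to be the only delicate point.
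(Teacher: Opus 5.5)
Your proof is correct and is essentially the paper's. Both arguments move a nontrivial invariant subspace of $T$ by a unitary into $\{h\in R(\bD):h(0)=0\}$. Then, since $f(\mu\td T)$ is a norm limit of polynomials in $\td T$, one gets $[f(\mu\td T)g](0)=0$, so the closed ideal generated by $f(\mu\td T)g$ is proper. Your choice of $U$, sending a single unit vector $u\perp\mathcal L$ to $e_0$, is a slightly simpler substitute for the paper's three-case dimension matching.

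Your Step 3 concern does not arise. Definition \ref{D:ideal property} already contains a scaling parameter $0<\mu<1/r(\td T)$, and the paper takes $\mu=1/(\|\td T\|+1)$, so $T$ itself is never rescaled. For $\M$, take $zR(\bD)$: the polynomials vanishing at $0$ do not form an ideal of $R(\bD)$.
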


Combining Theorems \ref{T:operator with i.p. is intransitive} and \ref{B}, we see that $T\in B(\H)$ is intransitive if and only if it is unitarily equivalent to an operator on $R(\bD)$ with the ideal property; natural examples come from multiplication at a single point (Example \ref{E:one point}). Moreover, whenever $\sigma_p(T)=\varnothing$, the ideal property is inherited by the norm-closed algebra generated by $T$ and, under a natural commutant condition, by its strong operator closure (Propositions \ref{P:ideal property v.s. Banach algebra} and \ref{P:ideal property v.s. commutant}).

Section \ref{S:Pearcy} is devoted to a problem of Pearcy \cite{Pea78}: if $N$ is a diagonal normal operator with eigenvalues dense in $\bD$, show that every operator $N+F$ with $F$ of rank one has a nontrivial invariant subspace. Our methods apply to rank-one perturbations $D+f\otimes g$ of diagonal operators on $R(\bD)$: as soon as either $f$ or $g$ is not a cyclic vector of $D$, the operator $D+f\otimes g$ has the ideal property and is therefore intransitive (Lemma \ref{L:Pearcy} and Corollary \ref{C:Pearcy}).

Theorem \ref{T:FHJZ1} (a generalization of Theorem 4.4 of \cite{FHJZ1}) shows that $T\in B(\H)$ is intransitive if and only if there exist nonzero vectors $\xi,\eta\in\H$ for which the generating function
\[\varphi(z):=\sum_{n=0}^{\infty}\langle T^n\xi,\eta\rangle z^n\]
is rational. Thus $T$ is intransitive whenever the moments $\langle T^n\xi,\eta\rangle$ are of the form $p_1(n)\lambda_1^n+\cdots+p_m(n)\lambda_m^n$ with $p_1,\dots,p_m$ polynomials (Theorem \ref{T:polynomial moments}); moreover, for finite-rank perturbations of diagonal operators satisfying suitable orthogonality conditions, the rationality of $\varphi$ can be verified explicitly (Theorems \ref{T:H2 perturbation} and \ref{T:finite rank perturbations}).

Section \ref{S:Halmos} is devoted to Halmos's third problem \cite{Hal70}, which has remained open for more than fifty years: is the inverse of an invertible intransitive operator necessarily also intransitive? Some partial positive results are due to Douglas, Foias and Pearcy \cite{DFP05} and to Jiang, Fang and Ji \cite{JFJ25}. We say that $T\in B(\H)$ has a \emph{conjugate bi-geometric form} if, with respect to some nontrivial decomposition $\H=\mathfrak{M}\oplus\mathfrak{M}^{\perp}$, it is of the form $\left[\begin{smallmatrix}T_1&T_{12}\\0&T_2\end{smallmatrix}\right]$ with $T_1^*,T_2\in B_1(O_\delta)$ for some $\delta>0$, where $O_\delta:=\{z\in\mathbb C:|z|<\delta\}$ and $B_1(\Omega)$ denotes the Cowen--Douglas class, consisting of the operators $X\in B(\H)$ such that $\Omega\subset\sigma(X)$, $\ran(X-z)=\H$, $\bigvee_{z\in\Omega}\ker(X-z)=\H$ and $\dim\ker(X-z)=1$ for every $z\in\Omega$. We show that it suffices to solve Halmos's third problem for this class of operators.

\bigskip

\begin{theoremi}\label{C}
The following two statements are equivalent.
\begin{enumerate}
\item[(1)] For any invertible operator $T\in B(\H)$ with a nontrivial invariant subspace, $T^{-1}$ has a nontrivial invariant subspace.
\item[(2)] For any invertible operator $T\in B(\H)$ with a conjugate bi-geometric form, $T^{-1}$ has a nontrivial invariant subspace.
\end{enumerate}
\end{theoremi}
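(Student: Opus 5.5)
(1)$\Rightarrow$(2) is immediate: an operator with a conjugate bi-geometric form leaves $\mathfrak{M}$ invariant, and $\mathfrak{M}$ is nontrivial. The content is (2)$\Rightarrow$(1). The plan is to take a counterexample to (1) and, after a sequence of reductions that preserve the property ``$T$ intransitive, $T^{-1}$ transitive'', show that the reduced operator can be taken to have a conjugate bi-geometric form.

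\textbf{Step 1 (normalize).} Suppose $T$ is invertible, $\mathfrak{M}\in\textup{Lat}\,T$ is nontrivial, and $T^{-1}$ is transitive. Write
\[T=\left[\begin{smallmatrix}T_1&T_{12}\\0&T_2\end{smallmatrix}\right]\]
with respect to $\H=\mathfrak{M}\oplus\mathfrak{M}^\perp$. Because $T^{-1}$ is transitive, $T^{-1}\mathfrak{M}\neq\mathfrak{M}$, so $T_1$ is not onto. Hence $T_1$ is bounded below but not invertible, and $T_2$ is onto but not injective, since $T$ is invertible.

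\textbf{Step 2 (push $0$ deep into the Cowen--Douglas region).} Next I use the fact that for $\lambda$ with $\lambda^{-1}\notin\sigma(T)$, the operator $T^{-1}-\lambda$ is a fixed invertible multiple of $T-\lambda^{-1}$:
\[T^{-1}-\lambda=-\lambda T^{-1}(T-\lambda^{-1}).\]
Consequently $\textup{Lat}(T^{-1})$ is unchanged by Möbius-type changes $T\mapsto (T-\mu)^{-1}$ and by affine maps. More generally, for a conformal map $\phi$ defined near $\sigma(T)$, $\textup{Lat}\,\phi(T)$ is related to $\textup{Lat}\,T$ through the Riesz functional calculus. The aim is to move a point of $\sigma(T_1)\setminus\sigma_{ap}$ (where $T_1-z$ is semi-Fredholm with $\operatorname{ind}<0$) to $0$. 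The index is constant on components. If $T_1-z$ has cokernel dimension $>1$ there, or if $T_1$ has no Fredholm point, I would compress further: pass to $\mathfrak{M}_1=\mathfrak{M}\ominus\bigvee_{|z|<\delta}\ker(T_1^*-\bar z)$, or choose a subspace of it. The goal is to obtain $T_1^*\in B_1(O_\delta)$ and, dually, $T_2\in B_1(O_\delta)$. The span conditions $\bigvee\ker(T_2-z)=\mathfrak{M}^\perp$ and the analogous one for $T_1^*$ are obtained by restricting to the closed span of these kernels. Each restriction or compression must remain inside a nontrivial invariant subspace of $T$, so that the new operator is still invertible and intransitive. Transitivity of its inverse must also be inherited, so I would show that an invariant subspace for the inverse of the new operator produces one for $T^{-1}$.

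\textbf{Step 3 (the main obstacle).} The hard part is arranging that the relevant points are Fredholm points with index exactly $\mp1$ near $0$. When $T_1$ is merely bounded below, $\ran(T_1-z)$ need not be closed for $z$ near $0$. My first attempt is to exploit transitivity of $T^{-1}$ to rule out the degenerate cases, via the following principle: if some spectral or kernel subspace built from $T_1$ or $T_2$ (for instance $\bigvee_n\ker T_2^{\,n}$ pulled back, or $\bigcap_n\ran T_1^n$) is a nontrivial subspace invariant under $T^{-1}$, we get a contradiction, and otherwise that subspace supplies a smaller invariant subspace of $T$ with better structure. Iterating this with a maximality (Zorn) argument should yield a one-dimensional cokernel/kernel configuration. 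I would then use analyticity of $z\mapsto\ker(T_2-z)$ on a small disk to verify the $B_1(O_\delta)$ axioms.
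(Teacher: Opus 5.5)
\textbf{There is a genuine gap in (2)$\Rightarrow$(1).} Your (1)$\Rightarrow$(2) and Step 1 are fine. But Step 3 ends with the hope that a Zorn-type iteration ``should yield a one-dimensional cokernel/kernel configuration,'' and producing that configuration is the entire content of the theorem.

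The missing idea is to keep $T$ fixed, change only the invariant subspace, and make the new one \emph{cyclic}. Write $\mathcal N$ for your original $\mathfrak M$. Since $T|_{\mathcal N}$ is bounded below and not onto, choose $0\neq e_0\in\ker(T|_{\mathcal N})^*$ and set $\mathfrak M=\bigvee_{n\ge0}T^ne_0\subseteq\mathcal N$. Because $T\mathfrak M$ is closed and $e_0\perp T\mathcal N\supseteq T\mathfrak M$, one has $\mathfrak M=\mathbb C e_0\oplus T\mathfrak M$. So $T_1=T|_{\mathfrak M}$ has range of codimension exactly one, with no maximality argument needed.

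Then $\dim\ker T_2=1$ is forced rather than arranged ``dually''. For $0\neq f\in\ker T_2$ one has $T_{12}f\notin\ran T_1$: otherwise $T_1u=T_{12}f$ for some $u$, and $T$ annihilates $(-u)\oplus f$. Hence $f\mapsto T_{12}f+\ran T_1$ is an injective map from $\ker T_2$ into the one-dimensional space $\mathfrak M/\ran T_1$. This matters, because a separate compression on the $T_2$ side, as you suggest, would change $\mathfrak M$ again and could undo what was achieved for $T_1$.

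The spanning axioms come from a dichotomy you only gesture at: if $X$ is injective and bounded below, then $\mathcal R=\bigcap_n\ran X^n$ is closed and $X\mathcal R=\mathcal R$.
\begin{itemize}
\item For $X=T|_{\mathcal N}$: either $\mathcal R\neq\{0\}$, so $T^{-1}\mathcal R=\mathcal R$ and you are done, or $\bigcap_n\ran T_1^n\subseteq\bigcap_nT^n\mathcal N=\{0\}$. In the latter case $\bigvee_n\ker T_1^{*n}=\mathfrak M$. Combined with codimension one, this gives $T_1^*\in B_1(O_\delta)$, because for a left invertible operator the span of $\ker(T_1^*-z)$ over a small disc coincides with $\bigvee_n\ker T_1^{*n}$.
\item For $X=T_2^*=T^*|_{\mathfrak M^\perp}$: a nonzero $\mathcal R$ now gives $T^{-1}\mathcal R^\perp=\mathcal R^\perp$; otherwise you get $T_2\in B_1(O_\delta)$.
\end{itemize}

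\textbf{Step 2 is unnecessary and partly wrong.}
\begin{itemize}
\item Since $T_1$ is bounded below and not onto, $0$ is already where you want it. For all $|z|$ less than the lower bound of $T_1$, $T_1-z$ is bounded below with constant cokernel dimension. So the ``main obstacle'' of nonclosed ranges near $0$ never occurs. What can fail is only the one-dimensionality of the cokernel, and the cyclic subspace repairs that.
\item The claim that $\textup{Lat}(T^{-1})$ is unchanged under affine maps or $T\mapsto(T-\mu)^{-1}$ is false. The inverse of $T+c$ is $(T+c)^{-1}$, whose lattice is unrelated to $\textup{Lat}(T^{-1})$ in general. The inverse of $(T-\mu)^{-1}$ is $T-\mu$, whose lattice is $\textup{Lat}\,T$. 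So these maps do not preserve the property ``$T$ intransitive, $T^{-1}$ transitive''.
\item Your candidate $\mathcal N\ominus\bigvee_{|z|<\delta}\ker((T|_{\mathcal N})^*-\bar z)$ equals $\bigcap_nT^n\mathcal N$. Under your standing assumption this is $\{0\}$, so it cannot lower the cokernel dimension. The useful subspace is the one generated by a cokernel vector, and it is not contained in your candidate.
\item Treating restrictions or compressions of $T$ as ``new operators'' loses invertibility ($T_1$ itself is not invertible). This is another reason the reduction must vary only the subspace, not the operator.
\end{itemize}
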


There is an instructive analogy with the invariant subspace problem for subnormal operators. In \cite{Sarason72}, Sarason observed that the problem for a general subnormal operator can be reduced to a special case, namely that of the operator of multiplication by $z$ on the closure of the polynomials in $L^2(\mu)$ for a suitable finite positive measure $\mu$; based on this reduction, Brown \cite{Brown78} proved that every subnormal operator has a nontrivial invariant subspace. Theorem \ref{D} plays a role analogous to that of Sarason's reduction in the study of Halmos's third problem: it shows that the general problem is equivalent to its restriction to the special class of invertible operators with conjugate bi-geometric form.

For invertible operators with conjugate bi-geometric form we obtain an explicit formula for $T^{-1}$ (Proposition \ref{P:form of inverse}), which yields new classes of operators whose inverses are intransitive: if the off-diagonal part is of rank one, say $T_{12}=e\otimes f$, then $T$ is invertible if and only if $e\notin\ran A$ and $f\notin\ran B^*$, and then $T^{-1}$ is intransitive (Corollary \ref{C:rank-one}, extending the bilateral shift). In particular, we obtain the following criterion.

\begin{theoremi}\label{D}
Let $T\in B(\H)$ be an invertible operator. Suppose that there is a nontrivial projection $P$ such that $PTP=TP$ and such that $PT(I-P)$ is of rank one. Then $T^{-1}$ is intransitive.
\end{theoremi}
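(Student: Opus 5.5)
We write $T$ as an upper triangular matrix with respect to $\H=\mathfrak M\oplus\mathfrak M^\perp$, where $\mathfrak M=\ran P$. The plan is to dispose of the easy cases by hand, and in the remaining case to show that $T$ has a conjugate bi-geometric form with rank-one corner, so that Corollary \ref{C:rank-one} applies.

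\textbf{Setup and the easy case.} The hypothesis $PTP=TP$ says that $\mathfrak M$ is invariant under $T$, and the rank-one hypothesis says
\[
T=\begin{bmatrix}T_1& e\otimes f\\0&T_2\end{bmatrix},\qquad 0\neq e\in\mathfrak M,\ 0\neq f\in\mathfrak M^\perp .
\]
Since $T$ is invertible, $T_1$ is bounded below and $T_2$ is surjective. Moreover, $T_1$ is invertible if and only if $T_2$ is invertible. If they are invertible, then $T^{-1}$ is again upper triangular, so $\mathfrak M$ is a nontrivial invariant subspace of $T^{-1}$ and we are done.

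\textbf{The non-invertible case.} Assume now that $T_1$ and $T_2$ are not invertible. Solving $T(u\oplus v)=a\oplus b$ shows that surjectivity and injectivity of $T$ force:
\begin{itemize}
\item $\ran T_1$ is closed of codimension one in $\mathfrak M$, and $e\notin\ran T_1$;
\item $\ker T_2=\mathbb C k$ is one-dimensional, and $\langle k,f\rangle\neq0$.
\end{itemize}

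\textbf{Step 1: generalized kernel of $T_2$.} Put $\N_2=\bigvee_{n\ge1}\ker T_2^n\subset\mathfrak M^\perp$. We claim that $\mathfrak M\oplus\N_2$ is invariant under $T^{-1}$.
\begin{itemize}
\item For $a\in\mathfrak M$, write $a=T_1u+c\,e$ with $u\in\mathfrak M$ and $c\in\mathbb C$. Then $T^{-1}a=u'\oplus c'k$ for suitable $u'\in\mathfrak M$ and $c'\in\mathbb C$.
\item For $b\in\ker T_2^{n}$, pick $v$ with $T_2v=b$, so $v\in\ker T_2^{n+1}$. Then $T^{-1}(0\oplus b)$ equals $(\ast)\oplus v$ modulo $\mathfrak M\oplus\mathbb C k$.
\end{itemize}
This is a routine check using $\ker T_2=\mathbb Ck$. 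Hence, if $\N_2\neq\mathfrak M^\perp$, we have found a nontrivial invariant subspace of $T^{-1}$.

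\textbf{Step 2: the dual argument.} Apply the same reasoning to $T^*$, which is lower triangular with $\mathfrak M^\perp$ invariant, corner $f\otimes e$, $T_1^*$ surjective, and $\ker T_1^*$ one-dimensional. If $\N_1=\bigvee_n\ker T_1^{*n}\neq\mathfrak M$, then $T^{*-1}$ has the nontrivial invariant subspace $\N_1\oplus\mathfrak M^\perp$. Its orthogonal complement is invariant under $T^{-1}$.

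\textbf{Step 3: reduction to Corollary \ref{C:rank-one}.} In the remaining case $\N_1=\mathfrak M$ and $\N_2=\mathfrak M^\perp$. Then $T_2$ is surjective with $\dim\ker T_2=1$ and dense generalized kernel at $0$. By stability of the index and the kernel dimension under small perturbations, $T_2-z$ is surjective with one-dimensional kernel for $|z|<\delta$. The standard Cowen--Douglas argument then gives
\[
\bigvee_{|z|<\delta}\ker(T_2-z)\supseteq\bigvee_n\ker T_2^n=\mathfrak M^\perp :
\]
derivatives at $0$ of a holomorphic frame $z\mapsto k(z)$ of $\ker(T_2-z)$ span the generalized kernel. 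Hence $T_2\in B_1(O_\delta)$, and symmetrically $T_1^*\in B_1(O_\delta)$ after shrinking $\delta$. So $T$ has a conjugate bi-geometric form with rank-one off-diagonal part, and Corollary \ref{C:rank-one} (via the explicit inverse of Proposition \ref{P:form of inverse}) gives that $T^{-1}$ is intransitive.

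\textbf{Expected obstacle.} I expect the main difficulty to be this last step: verifying the Cowen--Douglas spanning condition from density of the generalized kernel, and matching the hypotheses of Corollary \ref{C:rank-one} ($e\notin\ran T_1$ and the analogous condition on $f$) in the notation used there. If the spanning argument is delicate, a fallback is to use Steps 1--2 only to guarantee density. One would then directly exhibit, via the formula for $T^{-1}$ from Proposition \ref{P:form of inverse}, a $T^{-1}$-invariant subspace spanned by vectors of the form $\sum_n z^nT^{-n}(\cdot)$ built from the eigenvector families $k(z)$.
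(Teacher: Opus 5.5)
Your proposal is correct and follows the paper's proof essentially step for step. Your subspaces $\mathfrak M\ominus\N_1$ and $\mathfrak M\oplus\N_2$ are exactly $\bigcap_n\ran T_1^n$ and the orthogonal complement of $\bigcap_n\ran (T_2^*)^n$, which are the subspaces the paper obtains from Lemma \ref{intersection} applied to $T$ and to $T^*$; your injectivity/surjectivity count giving $\dim\ker T_1^*=\dim\ker T_2=1$ matches the paper's rank-one argument; and your Step 3 reproduces Lemma \ref{L:CD operator}, where the frame $k(z)=(I-zR)^{-1}k$ with $R$ a right inverse of $T_2$ gives the spanning condition, before concluding with Corollary \ref{C:rank-one}. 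The only difference is that you verify invariance under $T^{-1}$ directly on the generalized kernels instead of through $T\mathcal R=\mathcal R$, and your ``expected obstacle'' does not arise, since Corollary \ref{C:rank-one} needs only the invertibility of $T$ together with the conjugate bi-geometric form with rank-one corner.
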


The hypotheses of Theorem \ref{D} are easy to verify in concrete situations, so the theorem gives a readily applicable affirmative answer to Halmos's third problem.

Finally, in Section \ref{S:tridiagonal} we consider tridiagonal operators: although it remains open whether every tridiagonal operator has a nontrivial closed invariant subspace, we exhibit a class of tridiagonal operators that are intransitive and admit infinite decreasing chains of invariant subspaces (Theorem \ref{T:tridiagonal}).


\bigskip

\noindent \textbf{Structure of the paper.} Section \ref{S:prelim} introduces the operators $K_{x,y}$ and the preliminaries on $R(\bD)$; the proofs of Theorems \ref{A} and \ref{B} are given in Sections \ref{S:main} and \ref{S:ideal}, where the ideal property is also studied. Sections \ref{S:Pearcy}, \ref{S:Halmos} and \ref{S:tridiagonal} then treat, respectively, Pearcy's problem (including the rationality criterion for intransitivity), Halmos's third problem (Theorems \ref{C} and \ref{D}) and tridiagonal operators.

\section{The operators \texorpdfstring{$K_{x,y}$}{K\_\{x,y\}} and the Sobolev disk algebra}\label{S:prelim}

In this section we establish the basic properties of the operators $K_{x,y}$ introduced above and collect the preliminaries on the Sobolev disk algebra $R(\bD)$, which provides the setting for all our applications. Throughout the first part of this section, $A,B,T$ denote operators in $B(\mathcal H)$ with $BA=I$, so that $A$ is a right inverse of $B$ and $B$ is a left inverse of $A$.

\begin{lem}\label{L:bounded}
Suppose $B,T\in B(\mathcal H)$ satisfy $r(T)r(B)<1$. For $x,y\in \mathcal H$, define
$$
K_{x,y}:=\sum_{n=0}^\infty T^n x\otimes B^{*n} y.
$$
Then $K_{x,y}\in B(\mathcal H)$.
\end{lem}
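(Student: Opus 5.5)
The plan is to show that the series defining $K_{x,y}$ converges absolutely in operator norm. Then $K_{x,y}$ is the norm limit of its finite partial sums, which are bounded finite-rank operators, and so $K_{x,y}\in B(\mathcal H)$.

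First I record the norm of a rank-one operator: $\|u\otimes v\|=\|u\|\,\|v\|$. Applying this to each term gives
\[
\|T^nx\otimes B^{*n}y\|=\|T^nx\|\,\|B^{*n}y\|\le \|T^n\|\,\|B^n\|\,\|x\|\,\|y\|,
\]
where I used $\|B^{*n}\|=\|(B^n)^*\|=\|B^n\|$.

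It therefore suffices to prove $\sum_n \|T^n\|\,\|B^n\|<\infty$. This is the only step that needs an idea, and I would settle it with Gelfand's spectral radius formula. Since $r(T)r(B)<1$, choose $\varepsilon>0$ small enough that $\rho:=(r(T)+\varepsilon)(r(B)+\varepsilon)<1$. Such an $\varepsilon$ exists because the product is continuous in $\varepsilon$, and the argument also covers the case where one of the radii is $0$. The spectral radius formula $r(X)=\lim_n\|X^n\|^{1/n}$ gives an $N$ with $\|T^n\|\le (r(T)+\varepsilon)^n$ and $\|B^n\|\le (r(B)+\varepsilon)^n$ for all $n\ge N$. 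Hence $\|T^n\|\|B^n\|\le\rho^n$ for $n\ge N$, and the series is dominated by a convergent geometric series.

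Since $B(\mathcal H)$ is a Banach space, absolute convergence implies norm convergence. This gives $K_{x,y}\in B(\mathcal H)$ together with the bound $\|K_{x,y}\|\le \|x\|\|y\|\sum_n\|T^n\|\|B^n\|$. No step is a real obstacle; the only point needing care is the choice of $\varepsilon$, which must make the product of the two perturbed radii less than $1$, not each radius separately.
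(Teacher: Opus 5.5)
Your proof is correct and follows essentially the same route as the paper: both bound the $n$-th term by $\|T^n\|\,\|B^n\|\,\|x\|\,\|y\|$ and then use the spectral radius formula to get geometric decay. The paper picks $\mu\in(r(T)r(B),1)$ directly, whereas you perturb each radius by $\varepsilon$; also, you get convergence in operator norm rather than only pointwise, and this immediately gives compactness of $K_{x,y}$ (Lemma \ref{L:compact}) as a norm limit of finite-rank operators.
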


\begin{proof}
For $z\in \mathcal H$ with $\|z\|=1$,
$$
\sum_{n=0}^\infty \|T^nx\|\,|\langle z,B^{*n}y\rangle|
\leq \sum_{n=0}^\infty \|T^n\|\|x\|\,\|B^{*n}\|\|y\|.
$$
Since $\|B^{*n}\|=\|B^n\|$ and $r(T)r(B)<1$, choose $\mu\in(r(T)r(B),1)$. By the spectral radius formula, there exists $N$ such that for all $n>N$,
$$
\|T^n\|\,\|B^{*n}\|\leq \mu^n.
$$
Thus $K_{x,y}$ is a well-defined linear operator on $\mathcal H$, and for general $z\in\mathcal H$,
$$
\|K_{x,y}(z)\|
\leq \left(\sum_{n=0}^N \|T^n\|\|x\|\|B^{*n}\|\|y\|
+\sum_{n=N+1}^\infty \mu^n\|x\|\|y\|\right)\|z\|.
$$
Hence $K_{x,y}$ is bounded.
\end{proof}

\begin{lem}\label{L:compact}
Under the hypotheses of Lemma \ref{L:bounded}, the operator $K_{x,y}$ is compact, that is, $K_{x,y}\in K(\mathcal H)$.
\end{lem}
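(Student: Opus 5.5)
The plan is to show that $K_{x,y}$ is a norm limit of finite-rank operators. Since $K(\mathcal H)$ is norm closed, this gives compactness.

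For $N\ge 0$ set
$$
K_N:=\sum_{n=0}^{N} T^n x\otimes B^{*n}y .
$$
Each summand $T^nx\otimes B^{*n}y$ is a rank-one (or zero) operator, so $K_N$ has rank at most $N+1$. By the definition of the rank-one operator $f\otimes g$, we have $\|f\otimes g\|=\|f\|\|g\|$. Hence
$$
\|K_{x,y}-K_N\|\le \sum_{n=N+1}^\infty \|T^nx\|\,\|B^{*n}y\|\le \|x\|\|y\|\sum_{n=N+1}^\infty \|T^n\|\,\|B^{n}\|.
$$

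As in the proof of Lemma \ref{L:bounded}, fix $\mu\in(r(T)r(B),1)$. The spectral radius formula gives $\|T^n\|^{1/n}\|B^n\|^{1/n}\to r(T)r(B)<\mu$. So there is $N_0$ with $\|T^n\|\|B^n\|\le\mu^n$ for all $n>N_0$. Consequently, for $N\ge N_0$,
$$
\|K_{x,y}-K_N\|\le \|x\|\|y\|\,\frac{\mu^{N+1}}{1-\mu}\longrightarrow 0 .
$$

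This estimate also shows that the defining series of $K_{x,y}$ converges in operator norm, not merely pointwise. The partial sums $K_N$ are therefore Cauchy in $B(\mathcal H)$, and their norm limit agrees with the pointwise-defined operator from Lemma \ref{L:bounded}. Thus $K_{x,y}$ lies in the norm closure of the finite-rank operators, which is contained in $K(\mathcal H)$.

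There is no real obstacle here. The only point needing care is that the convergence is in operator norm rather than strong convergence, and the geometric tail bound above settles that.
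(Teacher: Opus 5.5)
Your proof is correct, and its core estimate is the paper's: the geometric bound $\|T^n\|\,\|B^{n}\|\le\mu^n$ for large $n$, obtained from the spectral radius formula. The difference is the compactness criterion you feed it into.

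The paper takes a weakly null sequence $\{z_k\}$ in the unit ball and shows $\|K_{x,y}z_k\|\to0$. It splits $K_{x,y}z_k$ into the head $\sum_{n\le N}$, which tends to $0$ because each $\langle z_k,B^{*n}y\rangle\to0$, and the tail $\sum_{n>N}$, which is bounded by $\sum_{n>N}\mu^n\|x\|\|y\|$. It then appeals to the fact that on a Hilbert (reflexive) space an operator sending weakly null sequences to norm null sequences is compact.

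You instead apply the tail bound uniformly on the unit ball, using $\|f\otimes g\|=\|f\|\|g\|$. This gives $\|K_{x,y}-K_N\|\to0$, so $K_{x,y}$ is a norm limit of finite-rank operators.

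Your route has two small advantages. It makes explicit that the defining series converges in operator norm, which the introduction asserts while Lemma \ref{L:bounded} only spells out pointwise with a uniform bound. It also needs no reflexivity, so it would work verbatim on any Banach space. In the Hilbert space setting of the paper the two arguments are equally strong, and the difference is essentially one of bookkeeping.
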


\begin{proof}
Let $\{z_k\}$ be a sequence in the unit ball of $\mathcal H$ such that $z_k\to 0$ weakly. We show that $K_{x,y}(z_k)\to 0$ in norm.

Fix $\varepsilon>0$. Since $r(T)r(B)<1$, choose $\mu\in(r(T)r(B),1)$. Then there exists $N$ such that for all $n>N$,
$$
\|T^n\|\,\|B^{*n}\|\leq \mu^n,
$$
and moreover
$$
\sum_{n=N+1}^\infty \mu^n\|x\|\|y\|<\frac{\varepsilon}{2}.
$$
For each $k$,
$$
\|K_{x,y}(z_k)\|
\leq \sum_{n=0}^N \|T^nx\|\,|\langle z_k,B^{*n}y\rangle|
+\sum_{n=N+1}^\infty \|T^nx\|\,|\langle z_k,B^{*n}y\rangle|.
$$
The tail is bounded by
$$
\sum_{n=N+1}^\infty \|T^n\|\|x\|\|B^{*n}\|\|y\|
\leq \sum_{n=N+1}^\infty \mu^n\|x\|\|y\|
< \frac{\varepsilon}{2}.
$$
Since $z_k\to 0$ weakly, for each fixed $0\leq n\leq N$,
$$
\langle z_k,B^{*n}y\rangle\to 0.
$$
Thus the finite sum tends to $0$, so for sufficiently large $k$,
$$
\sum_{n=0}^N \|T^nx\|\,|\langle z_k,B^{*n}y\rangle|
< \frac{\varepsilon}{2}.
$$
Hence $\|K_{x,y}(z_k)\|<\varepsilon$ for large $k$, so $K_{x,y}\in K(\mathcal H)$.
\end{proof}

\begin{lem}\label{L:kernel of Rosenblum}
Let $A,B,T\in B(\mathcal H)$ satisfy $r(T)r(B)<1$ and $BA=I$. For every $y\in \ker A^*$ and every $x\in \mathcal H$, we have $K_{x,y}\in \ker \tau_{T,A}$.
\end{lem}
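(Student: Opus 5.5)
We need to show $TK_{x,y}=K_{x,y}A$, i.e. $\tau_{T,A}(K_{x,y})=0$. The plan is a direct computation on rank-one terms, using the rules $S(u\otimes v)=Su\otimes v$ and $(u\otimes v)S=u\otimes S^*v$.

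First we compute $TK_{x,y}$. Since the series defining $K_{x,y}$ converges in norm (Lemma \ref{L:bounded}), we may multiply termwise:
\[
TK_{x,y}=\sum_{n=0}^\infty T^{n+1}x\otimes B^{*n}y=\sum_{m=1}^\infty T^m x\otimes B^{*(m-1)}y .
\]

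Next we compute $K_{x,y}A$, again termwise, which is legitimate because right multiplication by $A$ is norm-continuous:
\[
K_{x,y}A=\sum_{n=0}^\infty T^nx\otimes A^*B^{*n}y .
\]

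Now we simplify $A^*B^{*n}y$. From $BA=I$ we get $A^*B^*=(BA)^*=I$. For $n\ge 1$ this gives $A^*B^{*n}y=B^{*(n-1)}y$. For $n=0$ the term is $x\otimes A^*y$, which vanishes because $y\in\ker A^*$. Hence
\[
K_{x,y}A=\sum_{n=1}^\infty T^nx\otimes B^{*(n-1)}y=TK_{x,y},
\]
so $\tau_{T,A}(K_{x,y})=TK_{x,y}-K_{x,y}A=0$.

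There is no real obstacle. The only point needing care is the justification of the termwise manipulations. This follows because the convergence in Lemma \ref{L:bounded} is absolute in operator norm: $\|T^nx\otimes B^{*n}y\|\le\|T^n\|\|B^{*n}\|\|x\|\|y\|\le\mu^n\|x\|\|y\|$ for large $n$, and multiplication on either side by a bounded operator is continuous in norm.
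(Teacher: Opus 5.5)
Your proposal is correct and is essentially the paper's proof: the same termwise computation of $TK_{x,y}$ and $K_{x,y}A$ via $(u\otimes v)A=u\otimes A^*v$, followed by $A^*B^*=I$ and $A^*y=0$. Your explicit justification of the termwise manipulations via absolute operator-norm convergence, $\|T^nx\otimes B^{*n}y\|\le\mu^n\|x\|\|y\|$, is a step the paper leaves implicit.
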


\begin{proof}
Since $BA=I$, taking adjoints gives $A^*B^*=I$. Also,
$$
T K_{x,y}
=\sum_{n=0}^\infty T^{n+1}x\otimes B^{*n}y.
$$
For each rank-one term,
$$
(T^n x\otimes B^{*n}y)A
= T^n x\otimes A^*B^{*n}y.
$$
Because $A^*y=0$, the $n=0$ term vanishes. For $n\geq 1$,
$$
A^*B^{*n}y
= (A^*B^*)B^{*(n-1)}y
= B^{*(n-1)}y.
$$
Therefore
$$
K_{x,y}A
=\sum_{n=1}^\infty T^n x\otimes B^{*(n-1)}y
=\sum_{m=0}^\infty T^{m+1}x\otimes B^{*m}y
=T K_{x,y}.
$$
Thus $K_{x,y}A=TK_{x,y}$, so $K_{x,y}\in \ker \tau_{T,A}$.
\end{proof}

\begin{lem}\label{L:k_{x,y}}
Let $A,B,T\in B(\mathcal H)$ satisfy $r(T)r(B)<1$ and $BA=I$. Suppose $0\neq y\in \ker A^*$, $0\neq x\in \mathcal H$, and $\mathcal N$ is a nontrivial invariant linear manifold of $A$ such that
$$
0\neq \overline{K_{x,y}(\mathcal N)}\subsetneq \mathcal H.
$$
Then $T$ is intransitive.
\end{lem}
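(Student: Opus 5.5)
The plan is to show that the subspace $\mathcal{L}:=\overline{K_{x,y}(\mathcal N)}$ is invariant under $T$. By hypothesis $\mathcal L$ is neither $\{0\}$ nor $\mathcal H$, so invariance is all that is needed. The tool is the intertwining relation of Lemma \ref{L:kernel of Rosenblum}: since $y\in\ker A^*$ and $BA=I$, we have
$$
TK_{x,y}=K_{x,y}A .
$$
By Lemma \ref{L:bounded}, $K_{x,y}$ is a bounded operator, so this is an identity between elements of $B(\mathcal H)$.

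First I will check invariance on the dense linear manifold $K_{x,y}(\mathcal N)$. Take $v\in\mathcal N$. Then
$$
TK_{x,y}v=K_{x,y}(Av).
$$
Since $\mathcal N$ is an invariant linear manifold of $A$, we have $Av\in\mathcal N$. Hence $TK_{x,y}v\in K_{x,y}(\mathcal N)$, and so $T\bigl(K_{x,y}(\mathcal N)\bigr)\subset K_{x,y}(\mathcal N)$.

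Next I will pass to the closure using continuity of $T$:
$$
T\,\overline{K_{x,y}(\mathcal N)}\subset\overline{T K_{x,y}(\mathcal N)}\subset \overline{K_{x,y}(\mathcal N)}.
$$
Thus $\mathcal L$ is a closed subspace invariant under $T$. It is a subspace because $K_{x,y}(\mathcal N)$ is the image of a linear manifold under a linear map, and its closure is therefore closed and linear.

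Finally, the hypothesis $0\neq\mathcal L\subsetneq\mathcal H$ says exactly that $\mathcal L$ is a nontrivial member of $\textup{Lat}\,T$, so $T$ is intransitive. I expect no real obstacle: the only substantive input is the intertwining identity, which Lemma \ref{L:kernel of Rosenblum} already supplies. The standing assumptions $x\neq0$, $y\neq0$ and nontriviality of $\mathcal N$ are not used beyond what the displayed nontriviality hypothesis already encodes.
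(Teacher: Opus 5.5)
Your proposal is correct and follows the paper's own proof essentially step for step. The intertwining $TK_{x,y}=K_{x,y}A$ from Lemma \ref{L:kernel of Rosenblum} gives $T(K_{x,y}(\mathcal N))\subseteq K_{x,y}(\mathcal N)$, continuity of $T$ carries this to the closure, and the displayed hypothesis $0\neq\overline{K_{x,y}(\mathcal N)}\subsetneq\mathcal H$ supplies nontriviality.
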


\begin{proof}
By Lemma \ref{L:kernel of Rosenblum},
$$
T(K_{x,y}(\mathcal N))=K_{x,y}A(\mathcal N)\subseteq K_{x,y}(\mathcal N).
$$
Since $T$ is continuous,
$$
T\left(\overline{K_{x,y}(\mathcal N)}\right)
\subseteq \overline{K_{x,y}(\mathcal N)}.
$$
By assumption, this closed subspace is nonzero and proper. Hence $T$ has a nontrivial closed invariant subspace and is therefore intransitive.
\end{proof}

\bigskip

We now turn to the setting in which these operators will be applied: $\mathcal H=R(\bD)$, $A=M_z$, $B$ the canonical left inverse of $M_z$ (constructed at the end of this section), and $y=\mathbf 1$. The remainder of this section collects the necessary facts about $R(\bD)$ and $B$.

Let $\mathbb D$ be the unit disc in the complex plane, and let $W^{22}(\mathbb D)$ denote the Sobolev space
\[
W^{22}(\mathbb D):=\left\{f\in L^2(\mathbb D, dm): D^\alpha f\in L^2(\mathbb D, dm) \text{ for all } |\alpha|\leq 2\right\},
\]
where $dm$ denotes planar Lebesgue measure.

For $f,g\in W^{22}(\mathbb D)$, define
\[
\langle f,g\rangle:=\sum_{|\alpha|\leq 2}\int_{\mathbb D} D^\alpha f\,\overline{D^\alpha g}\,dm.
\]
Endowed with the inner product above, $W^{22}(\mathbb D)$ is a Hilbert space, and under an equivalent norm it is a unital Banach algebra. By the Sobolev embedding theorem, $f\in W^{22}(\mathbb D)$ implies $f\in C(\overline{\mathbb D})$ and $\|f\|_{C(\overline{\mathbb D})}\leq M\|f\|_{W^{22}(\mathbb D)}$ for some constant $M$.

For $f\in W^{22}(\mathbb D)$, define the multiplication operator $M_f$ on $W^{22}(\mathbb D)$ by
\[
M_f g:=fg,\qquad g\in W^{22}(\mathbb D).
\]
Let
\[
W(\mathbb D):=\{M_f : f\in W^{22}(\mathbb D)\}.
\]
Then $W(\mathbb D)$ is a strictly cyclic operator algebra with strictly cyclic vector $e(z)=1$. We write $\sigma_e(T)$ and $\mathrm{ind}(T)$ for the essential spectrum and the Fredholm index of an operator $T$, and $\sigma_{\mathrm{lre}}(T)$ for its left-right essential spectrum, the intersection of the left and right essential spectra. It is well known that
\[
\sigma(M_z)=\sigma_{\mathrm{lre}}(M_z)=\overline{\mathbb D},
\]
and
\[
\{M_z\}'=W(\mathbb D).
\]
Let $\mathcal A^a(M_z)$ be the closed algebra generated by rational functions of $M_z$ with poles outside $\overline{\mathbb D}$, and set $R(\mathbb D):=\mathcal A^a(M_z)e$. Since $W^{22}(\mathbb D)$ is strictly cyclic, there exist positive constants $N,K$ such that for every $f\in W^{22}(\mathbb D)$,
\begin{equation}\label{Eq:equivalent norms}
N\|f\|_{W^{22}(\mathbb D)}\leq \|M_f\|\leq K\|f\|_{W^{22}(\mathbb D)}.
\end{equation}
Thus $R(\mathbb D)$ is the closure in $W^{22}(\mathbb D)$ of all rational functions with poles outside $\overline{\mathbb D}$; we refer the reader to \cite{Lam71} for details.

The following proposition is \cite[Proposition 4.5.1]{JW2}.

\begin{prop}\label{P:multiplication operator}
Denote
\[
M_z(\mathbb D):=M_z|_{R(\mathbb D)}.
\]
Then:
\begin{enumerate}
\item $\sigma(M_z(\mathbb D))=\overline{\mathbb D}$ and $\sigma_e(M_z(\mathbb D))=\partial \mathbb D$; moreover, for every $z_0\in \mathbb D$,
\[
\ker(M_z(\mathbb D)-z_0)=\{0\}\quad\text{and}\quad \operatorname{ind}(M_z(\mathbb D)-z_0)=-1.
\]

\item The maximal ideal space of $\{M_z(\mathbb D)\}'$ is homeomorphic to $\overline{\mathbb D}$. The homomorphism $k_{z_0}$ corresponding to $z_0\in \overline{\mathbb D}$ is given by
\[
k_{z_0}^*(M_f(\mathbb D))=f(z_0)=\langle f,k_{z_0}\rangle
\]
for some $k_{z_0}\in R(\mathbb D)$, and $k_{z_0}\in \ker (M_z(\mathbb D)-z_0)^*$.

\item $\{M_z(\mathbb D)\}'=\mathcal A^a(M_z(\mathbb D))=\{M_f(\mathbb D):f\in R(\mathbb D)\}$, where $M_f(\mathbb{D}):=M_f|_{R(\mathbb{D})}$.
\end{enumerate}
\end{prop}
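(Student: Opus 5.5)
The plan is to work through the weighted-shift structure of $M_z(\mathbb D)$ and through strict cyclicity, proving (1), (3) and (2) in that order.

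\textbf{Step 1: $M_z(\mathbb D)$ is a weighted shift.} Rotations $f(z)\mapsto f(e^{i\theta}z)$ preserve the $W^{22}$ inner product, since planar Lebesgue measure and the full set of second derivatives are rotation invariant. Hence distinct monomials $z^n$, $z^m$ are orthogonal in $W^{22}(\mathbb D)$. A rational function with poles outside $\overline{\mathbb D}$ is analytic on a disk of radius $>1$, so its Taylor polynomials converge to it in $C^2(\overline{\mathbb D})$, hence in $W^{22}$. Therefore polynomials are dense in $R(\mathbb D)$, and $e_n=z^n/\|z^n\|$ is an orthonormal basis.

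An explicit computation shows that $\|z^n\|^2_{W^{22}}$ is comparable to $n^4\cdot n^{-1}$ up to lower-order terms, so it grows polynomially. Consequently the weights
\[
w_n=\|z^{n+1}\|/\|z^n\|
\]
of $M_z e_n=w_ne_{n+1}$ satisfy $w_n\to1$. For a unilateral weighted shift with $w_n\to 1$ it is standard that $M_z-S'$ is compact, where $S'$ is the unweighted shift in this basis. Hence $\sigma_e(M_z(\mathbb D))=\partial\mathbb D$ and $\mathrm{ind}(M_z(\mathbb D)-z_0)=-1$ for $z_0\in\mathbb D$.

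The kernel is trivial: $(z-z_0)f=0$ with $f$ continuous forces $f=0$. For $|\lambda|>1$, the function $(z-\lambda)^{-1}$ is rational with its pole outside $\overline{\mathbb D}$, so $M_{(z-\lambda)^{-1}}$ maps $R(\mathbb D)$ into itself and inverts $M_z-\lambda$. Together with the index, this gives $\sigma(M_z(\mathbb D))=\overline{\mathbb D}$, which proves (1).

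\textbf{Step 2: the commutant, proving (3).} Let $X$ commute with $M_z(\mathbb D)$ and put $h=X\mathbf 1$. Then $Xp=p(M_z)h=hp$ for every polynomial $p$. Strict cyclicity of $\mathbf 1$, together with the norm equivalence \eqref{Eq:equivalent norms} (multiplication by $h$ is bounded and $W^{22}$ is a Banach algebra), lets us pass to limits, giving $X=M_h(\mathbb D)$ with $h\in R(\mathbb D)$. Conversely, each $M_f$ with $f\in R(\mathbb D)$ is a norm limit of rational functions of $M_z$, again by \eqref{Eq:equivalent norms}. Hence
\[
\{M_z(\mathbb D)\}'=\mathcal A^a(M_z(\mathbb D))=\{M_f(\mathbb D):f\in R(\mathbb D)\}.
\]

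\textbf{Step 3: the maximal ideal space, proving (2).} By Step 2, the commutant is isomorphic as a Banach algebra (with an equivalent norm) to $R(\mathbb D)$. For $z_0\in\overline{\mathbb D}$, evaluation $f\mapsto f(z_0)$ is a character; it is continuous by the Sobolev embedding $\|f\|_\infty\le M\|f\|$.

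Conversely, let $\phi$ be a character and set $z_0=\phi(M_z)$. Then $z_0$ lies in $\sigma(M_z(\mathbb D))=\overline{\mathbb D}$, and $\phi$ agrees with evaluation at $z_0$ on polynomials. By density of polynomials, $\phi$ is evaluation at $z_0$ everywhere. The map $z_0\mapsto$ evaluation at $z_0$ is then a continuous bijection from the compact space $\overline{\mathbb D}$ onto the (Hausdorff) maximal ideal space, hence a homeomorphism.

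The Riesz representation theorem supplies $k_{z_0}\in R(\mathbb D)$ with $f(z_0)=\langle f,k_{z_0}\rangle$. Then
\[
\langle f,(M_z-z_0)^*k_{z_0}\rangle=\big((z-z_0)f\big)(z_0)=0
\]
for all $f$, so $k_{z_0}\in\ker(M_z(\mathbb D)-z_0)^*$.

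The main obstacle is Step 1's norm estimate for the monomials, that is, verifying that $w_n\to1$; everything else follows from it formally. I would compute $\int_{\mathbb D}|D^\alpha z^n|^2\,dm$ directly in polar coordinates. Each term is a polynomial in $n$ times $1/(2n+c)$, so the ratio of consecutive norms tends to $1$.
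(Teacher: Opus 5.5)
Your proof is correct. It cannot be compared step by step with the paper's, because the paper gives no argument here: the proposition is quoted from \cite[Proposition 4.5.1]{JW2}, and the weighted-shift description in Proposition \ref{P:R(D)} is also only cited.

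Your route is self-contained. You derive the orthonormal basis $\{z^n/\|z^n\|\}$ and the weights yourself. You get (1) because $M_z(\mathbb D)$ is a compact perturbation of the unweighted shift, together with the explicit inverse $M_{(z-\lambda)^{-1}}$ for $|\lambda|>1$. You get (3) from $Xp=p\,X\mathbf 1$ and the boundedness of multiplication by elements of the algebra $R(\mathbb D)$. You get (2) by transporting Gelfand theory from $R(\mathbb D)$ to the commutant, using the Sobolev embedding at boundary points. Your only inputs are the background facts already listed in Section \ref{S:prelim}: the Sobolev embedding, the Banach-algebra structure of $W^{22}(\mathbb D)$, and \eqref{Eq:equivalent norms}. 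The citation buys the authors brevity; your argument also recovers the weighted-shift description of Proposition \ref{P:R(D)} along the way.

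Two points should be tightened.

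First, your stated reason for orthogonality of the monomials is wrong. The integrand $\sum_{|\alpha|\le 2}|D^\alpha f|^2$ is not rotation invariant on $W^{22}(\mathbb D)$, because the mixed partial derivative is counted only once. For $f=xy$ the second-order part of the integrand is $1$, while for its rotation $(x^2-y^2)/2$ it is $2$. On holomorphic functions, however, the Cauchy--Riemann equations give $\langle f,g\rangle=\int_{\mathbb D}\bigl(f\bar g+2f'\overline{g'}+3f''\overline{g''}\bigr)\,dm$, and this is rotation invariant. Better still, the polar-coordinate computation you already plan gives orthogonality directly, together with $\|z^n\|^2=\pi(3n^4-n^2+2n+1)/(n+1)=\beta_n^{-2}$. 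Hence $w_n=\beta_n/\beta_{n+1}\to 1$.

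Second, in Step 3 the inclusion $\phi(M_z)\in\overline{\mathbb D}$ must come from the spectrum, not from the norm, since $\|M_z(\mathbb D)\|\ge\beta_0/\beta_1=\sqrt{5/2}>1$. The argument is valid because the spectrum of $M_z$ relative to its commutant coincides with $\sigma(M_z(\mathbb D))$. This holds because the commutant is inverse-closed; concretely, $(M_z-\lambda)^{-1}=M_{(z-\lambda)^{-1}}$ lies in it for $|\lambda|>1$.
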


From now on, we call $R(\mathbb D)$ the \textbf{Sobolev disk algebra}. For simplicity, we shall write $M_f$ instead of $M_f(\mathbb D)$.

\begin{defn}\label{D:beta}
Let $\{\beta_n\}$ be a sequence of positive numbers. Consider the space of sequences $f=\{f_n\}$ such that
\[
\|f\|_\beta^2:=\sum_{n=0}^\infty |f_n|^2\beta_n^2<+\infty.
\]
We formally write
\[
f(z)=\sum_{n=0}^\infty f_n z^n.
\]
Set
\[
H^2(\beta):=\{f:\{f_n\}\in \ell^2(\beta_n)\}.
\]
Let $H^\infty(\beta)$ denote the set of formal series $\phi(z)=\sum_{n=0}^\infty \widehat\phi(n)z^n$ such that $\phi H^2(\beta)\subset H^2(\beta)$.
\end{defn}

For the convenience of the reader, let us recall some results from \cite{Shi}.

\begin{thm}\cite[p62, Theorem 3]{Shi}\label{commutant}
If $A$ is an operator on $H^2(\beta)$ that commutes with $M_z$, then $A=M_\phi$ for some $\phi\in H^\infty(\beta)$.
\end{thm}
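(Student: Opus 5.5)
The statement is Shields' classical commutant theorem. I would prove it by identifying $A$ with multiplication by the function it assigns to the constant $\mathbf 1$.

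\emph{Setup.} Write $\phi:=A\mathbf 1=\sum_{n\ge0}\widehat\phi(n)z^n\in H^2(\beta)$, where $\mathbf 1=z^0$. Since $A$ commutes with $M_z$, for every monomial we have
\[
Az^n=AM_z^n\mathbf 1=M_z^nA\mathbf 1=z^n\phi .
\]
By linearity, $Ap=\phi p$ for every polynomial $p$. Here $\phi p$ is the formal Cauchy product, whose coefficients are finite sums. So $A$ and formal multiplication by $\phi$ agree on the polynomials.

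\emph{Extension to all of $H^2(\beta)$.} Fix $f\in H^2(\beta)$ and let $p_N$ be its Taylor partial sums. Then $p_N\to f$ in $H^2(\beta)$, since the monomials form an orthogonal basis with $\|z^n\|=\beta_n$. Continuity of $A$ gives $\phi p_N=Ap_N\to Af$ in norm.

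Norm convergence in $H^2(\beta)$ implies convergence of each coefficient, because $|\widehat g(k)|\le \|g\|_\beta/\beta_k$. It therefore suffices to compare $k$-th coefficients.
\begin{itemize}
\item The $k$-th coefficient of $\phi p_N$ is $\sum_{j=0}^{\min(k,N)}\widehat\phi(k-j)\widehat f(j)$.
\item For $N\ge k$ this equals $\sum_{j=0}^{k}\widehat\phi(k-j)\widehat f(j)$, which is the $k$-th coefficient of the formal product $\phi f$.
\end{itemize}
Hence the $k$-th coefficient of $Af$ equals that of $\phi f$ for every $k$. So $\phi f=Af\in H^2(\beta)$, which shows $\phi\in H^\infty(\beta)$ and $A=M_\phi$.

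\emph{Main obstacle.} The only delicate point is passing from the norm limit to the formal product. This is handled by the coefficientwise stabilization above: each coefficient of $\phi p_N$ is eventually constant in $N$, so no convergence of $\phi$ itself is needed.
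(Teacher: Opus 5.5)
Your proof is correct, and it is essentially the standard argument from Shields' survey, which the paper cites without reproducing: set $\phi=A\mathbf 1$, obtain $Ap=\phi p$ on polynomials from the commutation relation, then pass to general $f$ using norm convergence of the Taylor partial sums and continuity of the coefficient functionals $|\widehat g(k)|\le\|g\|_\beta/\beta_k$. The coefficientwise stabilization you point out is what justifies $Af=\phi f$ as formal series, and this gives both $\phi\in H^\infty(\beta)$ and $A=M_\phi$.
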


\begin{prop}\cite[p94, Proposition 31; p94, Corollary 1; p95, Corollary 2]{Shi}\label{P:Shields1}
Let $T$ be an injective unilateral shift represented as $M_z$ on $H^2(\beta)$. Then $T$ is strictly cyclic if and only if $H^2(\beta)=H^\infty(\beta)$. In that case:
\begin{enumerate}
\item there exists $c>0$ such that $c\|f\|_\infty\leq \|f\|\leq \|f\|_\infty$ for all $f\in H^2(\beta)$;
\item the maximal ideal space of $H^\infty(\beta)$ is the closed disc $\Delta(T)^-=\{z\in\mathbb C: |z|\leq r(T)\}$;
\item a subspace is invariant under $M_z$ if and only if it is an ideal. Moreover, the functions in any proper invariant subspace have a common zero in $\Delta(T)^-$.
\end{enumerate}
\end{prop}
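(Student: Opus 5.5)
The statement is Proposition \ref{P:Shields1}, quoted from \cite{Shi}. My plan is to derive it from Theorem \ref{commutant} together with standard Banach-algebra facts. I normalize $\beta_0=1$, so that $\mathbf 1=e_0$. I read $\|f\|_\infty$ as the multiplier norm $\|M_f\|$ on $H^2(\beta)$; the left inequality in (1) then also controls the sup-norm on $\Delta(T)^-$, through the spectral radius bound used below.

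\emph{The equivalence.} Suppose first that $T=M_z$ is strictly cyclic with strictly cyclic vector $e$. Then the norm-closed algebra $\mathcal A(T)$ generated by $T$ satisfies $\mathcal A(T)e=H^2(\beta)$. Every element of $\mathcal A(T)$ commutes with $M_z$, so by Theorem \ref{commutant} it has the form $M_\phi$ with $\phi\in H^\infty(\beta)$.
\begin{itemize}
\item If $e=\mathbf 1$, each $f\in H^2(\beta)$ equals $M_\phi\mathbf 1=\phi$ for some $\phi\in H^\infty(\beta)$, so $f\in H^\infty(\beta)$.
\item For a general strictly cyclic vector $e$, I would first show that $\mathbf 1$ is also strictly cyclic. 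Write $\mathbf 1=M_\psi e$. Since $e=e\cdot 1$ lies in $H^2(\beta)=\{\phi e:\phi\in H^\infty(\beta)\}$ and this set is an algebra, one checks that $\mathcal A(T)\mathbf 1\supseteq\mathcal A(T)M_\psi e$, which is all of $H^2(\beta)$. I expect this bookkeeping to be the delicate point of this direction; Shields's argument can be followed here.
\end{itemize}
Conversely, if $H^2(\beta)=H^\infty(\beta)$, then $\{M_\phi\mathbf 1:\phi\in H^\infty(\beta)\}=H^2(\beta)$. The algebra $\{M_\phi\}$ is the commutant by Theorem \ref{commutant}, so it is closed, and $\mathbf 1$ is a strictly cyclic vector.

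\emph{Item (1).} Consider the map $f\mapsto M_f$ from $H^2(\beta)$ to $B(H^2(\beta))$. It has closed graph, because coefficients depend continuously on $f$. By the closed graph theorem, $\|M_f\|\le C\|f\|$. In the other direction, $\|f\|=\|M_f\mathbf 1\|\le\|M_f\|$. Finally, for $|\lambda|\le r(T)$, the number $f(\lambda)$ lies in $\sigma(M_f)$ by the spectral mapping theorem, so $|f(\lambda)|\le\|M_f\|$. This gives the two-sided norm equivalence.

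\emph{Item (2).} With the equivalent norm, $H^2(\beta)$ is a commutative unital Banach algebra, isometric to $\{M_\phi\}$. It is generated by $z$, because polynomials are dense in $H^2(\beta)$ (the $e_n$ form a basis) and the norms are equivalent. Hence its maximal ideal space is homeomorphic to $\sigma(M_z)$. For an injective weighted shift, $\sigma(M_z)$ is rotation invariant and connected. The point I must still check is that there is no hole around $0$. Here I would use strict cyclicity: the adjoint $M_z^*$ has every $\lambda$ with $|\lambda|<r(T)$ as an eigenvalue, with eigenvector given by point evaluation, which is bounded by (1). This forces $\sigma(M_z)=\Delta(T)^-$.

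\emph{Item (3).} An ideal is clearly $M_z$-invariant. Conversely, let $\M$ be a closed subspace with $M_z\M\subset\M$. Then $p(M_z)\M\subset\M$ for every polynomial $p$. Polynomials are dense in $H^\infty(\beta)=H^2(\beta)$ in the multiplier norm by (1), so $M_\phi\M\subset\M$ for all $\phi$, and $\M$ is an ideal. A proper closed ideal is contained in a maximal ideal, which by (2) is the kernel of evaluation at some $\lambda\in\Delta(T)^-$. That $\lambda$ is the required common zero.

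I expect the main obstacle to be the forward direction of the equivalence, namely transferring strict cyclicity from an arbitrary vector $e$ to $\mathbf 1$, together with the no-hole spectral argument in (2).
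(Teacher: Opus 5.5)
The paper gives no proof of this proposition; it is quoted from Shields, so your argument has to stand on its own. Items (1)--(3) are essentially sound once the equivalence is established. However, the forward direction has a genuine gap at exactly the step you flagged.

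The inclusion $\mathcal A(T)\mathbf 1\supseteq\mathcal A(T)M_\psi e$ is an identity, since $M_\psi e=\mathbf 1$. Declaring the right-hand side to be all of $H^2(\beta)$ therefore assumes the conclusion. The reason you offer, that $\{\phi e:\phi\in H^\infty(\beta)\}$ is an algebra, is also unfounded: $(\phi_1e)(\phi_2e)=\phi_1\phi_2e^2$, and nothing tells you that $e$ is a multiplier. This step is not bookkeeping. What you actually need is that $M_\psi$ is invertible in $\mathcal A(T)$, and a Banach-algebra argument gives this:
\begin{enumerate}
\item The map $\Phi(A)=Ae$ from $\mathcal A(T)$ onto $H^2(\beta)$ is injective. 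Indeed, if $Ae=0$ then $A(Be)=B(Ae)=0$ for all $B\in\mathcal A(T)$, and $\mathcal A(T)e=H^2(\beta)$. By the open mapping theorem, $\Phi$ is an isomorphism of Banach spaces.
\item $\Phi$ carries the ideal $\mathcal A(T)M_\psi$ onto $\mathcal A(T)\mathbf 1$. This set contains every polynomial, so it is dense.
\item Hence $\mathcal A(T)M_\psi$ is a dense ideal of the unital Banach algebra $\mathcal A(T)$. It meets the open group of invertibles, and therefore equals $\mathcal A(T)$.
\item Consequently $\mathcal A(T)\mathbf 1=\mathcal A(T)M_\psi e=\mathcal A(T)e=H^2(\beta)$, and your case $e=\mathbf 1$ finishes the proof.
\end{enumerate}
This is Lambert's observation that a cyclic vector of a strictly cyclic abelian algebra is automatically strictly cyclic.

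There are also smaller slips.
\begin{itemize}
\item In the converse you show that the commutant $\{M_\phi\}$ has $\mathbf 1$ as a strictly cyclic vector. By your own convention, however, strict cyclicity concerns the norm-closed algebra $\mathcal A(T)$. You need $\{M_\phi:\phi\in H^\infty(\beta)\}=\mathcal A(T)$, which follows from the closed-graph bound in (1) together with density of polynomials. So (1) should be proved first.
\item In (1), the claim that $f(\lambda)\in\sigma(M_f)$ ``by the spectral mapping theorem'' is not justified, because $f$ need not be holomorphic on a neighbourhood of $\sigma(M_z)$. The boundedness of point evaluations you invoke in (2) rests on this claim. Use instead $\beta_n=\|M_z^n\mathbf 1\|\le\|M_{z^n}\|\le C\beta_n$. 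This gives $\beta_n^{1/n}\to r(T)$, and then Cauchy--Schwarz gives bounded evaluations for $|\lambda|<r(T)$.
\item Alternatively, in (2) you can use facts you already assert. The spectrum is connected and rotation-invariant, and it contains $0$ (because $\mathbf 1\notin M_zH^2(\beta)$) and a point of modulus $r(T)$. Such a set is already the full disc, so there is no hole to rule out.
\item In (3), observe that the partial sums of $f$ converge in the algebra norm. Hence every character, including those with $|\lambda|=r(T)$, is evaluation of the power series. This is what makes the ``common zero'' meaningful.
\end{itemize}
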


The following proposition collects the basic properties of $R(\mathbb D)$.

\begin{prop}\cite[Proposition 4.5.3]{JW2}\label{P:R(D)}
\begin{enumerate}
\item The Hilbert space $R(\mathbb D)$ has an orthonormal basis $\{e_n\}_{n=0}^\infty$, where $e_n=\beta_n z^n$ and
\[
\beta_n=\left[\frac{n+1}{(3n^4-n^2+2n+1)\pi}\right]^{1/2},\qquad n=0,1,2,\dots
\]

\item If $f(z)=\sum_{n=0}^\infty c_n z^n$ is analytic in $\mathbb D$, then $f\in R(\mathbb D)$ if and only if
\[
\sum_{n=0}^\infty \frac{|c_n|^2}{\beta_n^2}<+\infty.
\]
Moreover, if $f\in R(\mathbb D)$, then the partial sums $S_n=\sum_{k=0}^n c_k z^k$ converge uniformly to $f$ on $\overline{\mathbb D}$.

\item The operator $M_z$ is an essentially normal unilateral weighted shift, with
\[
M_z e_n=\frac{\beta_n}{\beta_{n+1}}e_{n+1},\qquad n=0,1,2,\dots
\]
\end{enumerate}
\end{prop}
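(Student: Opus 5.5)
The plan is a direct computation in the $W^{22}(\bD)$ inner product, using only two ingredients: the Cauchy--Riemann equations and the rotation invariance of planar Lebesgue measure.

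\emph{Step 1: the norm of an analytic function.} For analytic $f$ on $\bD$ we have $\partial_x f=f'$ and $\partial_y f=if'$. The second derivatives $\partial_x^2 f,\partial_x\partial_y f,\partial_y^2 f$ are $f''$, $if''$ and $-f''$. There are three multi-indices $\alpha$ with $|\alpha|=2$, namely $(2,0),(1,1),(0,2)$. Hence for analytic $f,g$ with enough regularity,
\[
\langle f,g\rangle=\int_{\bD} f\bar g\,dm+2\int_{\bD} f'\overline{g'}\,dm+3\int_{\bD} f''\overline{g''}\,dm .
\]

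\emph{Step 2: orthogonality of monomials and the constants $\beta_n$.} Every derivative of a monomial is again a monomial, and $\int_{\bD} z^k\bar z^j\,dm=0$ for $k\neq j$ by rotation invariance. So $\{z^n\}$ is an orthogonal family. Using $\int_{\bD}|z|^{2k}dm=\pi/(k+1)$ we get
\[
\|z^n\|^2=\pi\Big[\tfrac{1}{n+1}+2n+3n^2(n-1)\Big]=\frac{\pi(3n^4-n^2+2n+1)}{n+1},
\]
where the last equality is checked by combining over the denominator $n+1$. This gives $\|z^n\|=\beta_n^{-1}$ with the stated $\beta_n$, so $e_n=\beta_nz^n$ is an orthonormal family.

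\emph{Step 3: completeness.} Let $r$ be a rational function with poles outside $\Dbar$. Its Taylor coefficients decay like $\rho^n$ for some $\rho<1$, while $\beta_n^{-1}$ grows only like $n^{3/2}$. So the Taylor partial sums of $r$ converge to $r$ in $W^{22}$: the derivatives up to order two converge uniformly on $\Dbar$. Hence polynomials are dense in $R(\bD)$, and $\{e_n\}$ is an orthonormal basis. This proves (1).

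\emph{Step 4: the coefficient criterion and uniform convergence, part (2).} By the Sobolev embedding, $\|h\|_{C(\Dbar)}\le M\|h\|_{W^{22}}$.
\begin{itemize}
\item If $f\in R(\bD)$, Parseval gives $f=\sum\langle f,e_n\rangle e_n$ in norm. The embedding turns this into uniform convergence of the partial sums on $\Dbar$, and comparing with the Taylor series identifies $\langle f,e_n\rangle=\beta_n c_n$. Therefore $\sum|c_n|^2/\beta_n^2=\|f\|^2<\infty$.
\item Conversely, if $\sum |c_n|^2/\beta_n^2<\infty$, the partial sums $S_n$ form a Cauchy sequence in $R(\bD)$. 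Their norm limit is also their uniform limit, which equals $f$ pointwise on $\bD$. Hence $f\in R(\bD)$.
\end{itemize}

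\emph{Step 5: the weighted shift, part (3).} We have $M_ze_n=\beta_nz^{n+1}=\frac{\beta_n}{\beta_{n+1}}e_{n+1}$. The weights $\beta_n/\beta_{n+1}$ tend to $1$. For a weighted shift with weights $w_n$, $M_z^*M_z-M_zM_z^*$ is diagonal with entries $|w_n|^2-|w_{n-1}|^2\to0$, so it is compact and $M_z$ is essentially normal.

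\emph{Expected obstacle.} The delicate points are the bookkeeping in Step 1 (the mixed derivative $\partial_x\partial_y$ counts once, which is where the factor $3$ comes from) and the density argument in Step 3 identifying $R(\bD)$ with the closure of the polynomials. Everything else is routine.
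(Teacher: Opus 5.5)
The paper does not prove this proposition; it quotes it from \cite{JW2} without argument, so there is no internal proof to compare against. Your direct verification is correct and self-contained. The individual steps all check out:
\begin{itemize}
\item the Cauchy--Riemann reduction $\langle f,g\rangle=\int f\bar g+2\int f'\overline{g'}+3\int f''\overline{g''}$ for analytic $f,g$ (your count of the multi-indices is confirmed by the stated formula for $\beta_n$);
\item the norm computation $\|z^n\|^2=\pi(3n^4-n^2+2n+1)/(n+1)$;
\item the density of polynomials in $R(\bD)$, using the geometric decay of the Taylor coefficients of rational functions with poles off $\Dbar$;
\item the diagonal self-commutator argument showing that $M_z$ is essentially normal.
\end{itemize}

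There is one slip in Step 4. Since $\langle z^k,e_n\rangle=\beta_n\|z^n\|^2\delta_{kn}=\delta_{kn}/\beta_n$, the correct identification is $\langle f,e_n\rangle=c_n/\beta_n$, not $\beta_n c_n$. Only the corrected version yields the conclusion $\sum|c_n|^2/\beta_n^2=\|f\|^2$ that you draw in the next sentence.

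You can also do part (2) without the Sobolev embedding. Since $\beta_n^2\sim 1/(3\pi n^3)$, the series $\sum_n\beta_n^2$ converges, and Cauchy--Schwarz gives $\sum|c_n|\le(\sum|c_n|^2/\beta_n^2)^{1/2}(\sum\beta_n^2)^{1/2}$. This yields absolute and uniform convergence on $\Dbar$ directly, which is the same estimate the paper uses in the proof of Lemma \ref{L:polynomial}.
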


\begin{rem} With the notation of Definition \ref{D:beta}, $R(\mathbb D)$ is actually $H^2(\beta^{-1})$, where
\[
\beta^{-1}:=\left\{\left[\frac{(3n^4-n^2+2n+1)\pi}{n+1}\right]^{1/2}\right\}_{n=0}^\infty.
\]
\end{rem}

\begin{prop}\label{P:Mz is strongly cyclic}
The operator $M_z$ is strongly strictly cyclic on $R(\mathbb D)$.
\end{prop}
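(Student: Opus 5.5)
We will verify the defining condition directly on the weight sequence of $M_z$. Recall (Shields \cite{Shi}) that the injective weighted shift $M_z$ on $H^2(\beta)$ is \emph{strongly strictly cyclic} if it is strictly cyclic and, for every $j\ge 0$, its restriction to the invariant subspace
\[
\M_j:=\bigvee\{z^{m}: m\ge j\}=\{f\in H^2(\beta): \widehat f(0)=\cdots=\widehat f(j-1)=0\}
\]
is again strictly cyclic. By the Remark following Proposition \ref{P:R(D)}, $R(\bD)=H^2(\gamma)$ with $\gamma_n:=\beta_n^{-1}$, and from the explicit formula
\[
\gamma_n^2=\frac{(3n^4-n^2+2n+1)\pi}{n+1}\asymp (n+1)^3 .
\]
So there are constants $0<c\le C$ with $c(n+1)^{3/2}\le\gamma_n\le C(n+1)^{3/2}$ for all $n$, and everything reduces to estimates on this power weight.

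\textbf{Step 1: the criterion.} The restriction $M_z|_{\M_j}$ is unitarily equivalent to $M_z$ on $H^2(\gamma^{(j)})$ with $\gamma^{(j)}_k:=\gamma_{j+k}/\gamma_j$ (identify $z^{j+k}$ with $z^k$). For a weighted shift on $H^2(\delta)$, strict cyclicity with strictly cyclic vector $1$ follows from Shields' sufficient condition
\[
\sup_{n}\sum_{k=0}^{n}\Bigl(\frac{\delta_n}{\delta_k\,\delta_{n-k}}\Bigr)^2<\infty .
\]
Under this condition, multiplication of coefficient sequences (convolution) is bounded $H^2(\delta)\times H^2(\delta)\to H^2(\delta)$ by Cauchy--Schwarz, so $H^2(\delta)=H^\infty(\delta)$, and we can apply Proposition \ref{P:Shields1}.

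\textbf{Step 2: the estimate.} For $\delta=\gamma^{(j)}$ the quantity to bound is
\[
\Sigma_{j,n}:=\sum_{k=0}^{n}\Bigl(\frac{\gamma_{j+n}\,\gamma_j}{\gamma_{j+k}\,\gamma_{j+n-k}}\Bigr)^2\le \Bigl(\frac{C^2}{c^2}\Bigr)^2\sum_{k=0}^{n}\Bigl(\frac{(j+n+1)(j+1)}{(j+k+1)(j+n-k+1)}\Bigr)^{3}.
\]
By symmetry $k\leftrightarrow n-k$ it suffices to sum over $k\le n/2$. In that range $j+n-k+1\ge \tfrac12(j+n+1)$, so each term is at most
\[
8\Bigl(\frac{2(j+1)}{j+k+1}\Bigr)^{3}\le 64\,\frac{(j+1)^3}{(k+1)^3}.
\]
Hence $\Sigma_{j,n}\le C_j\sum_{k\ge0}(k+1)^{-3}<\infty$, uniformly in $n$ for each fixed $j$. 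The case $j=0$ gives strict cyclicity of $M_z$ itself, which is consistent with Proposition \ref{P:multiplication operator}(3). For $j\ge1$ it gives strict cyclicity of each $M_z|_{\M_j}$. Thus $M_z$ is strongly strictly cyclic.

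\textbf{Main obstacle.} The delicate point is matching Shields' precise definition of ``strongly strictly cyclic''. The bound above is uniform in $n$ but grows like $(j+1)^3$ in $j$. If the definition requires a bound uniform in $j$ as well (uniform strict cyclicity of all restrictions), this crude estimate does not suffice. In that case I would first check whether the weight $(n+1)^{3/2}$ still satisfies Shields' exact characterization (stated via the norming constants of the restrictions). Only if that fails would I replace the weighted-$\ell^2$ convolution bound by a direct Sobolev-algebra argument: show that $W^{22}(\bD)$, and hence each closed ideal $z^jR(\bD)$ after division by $z^j$, is a Banach algebra with constants controlled by the $C^2$ norm on $\Dbar$.
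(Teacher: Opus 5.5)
Your argument is correct, but it takes a different route from the paper's.

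The paper does no estimate. It notes that $\beta_n^{-1}$ is comparable to $\widetilde\beta_n=(n+1)^{3/2}$, and quotes Shields' worked example (p.~99, Example~1) that $M_z$ on $H^2(\widetilde\beta)$ is strongly strictly cyclic. It then transfers this to $R(\bD)$ via Shields' similarity criterion for weighted shifts: comparable weights give similar shifts. Here the similarity is just the identity map between equivalent norms, so it preserves each $\M_j$.

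You instead prove everything from scratch. You derive the Kerlin--Lambert convolution bound by Cauchy--Schwarz, pass through Proposition~\ref{P:Shields1}, and check the criterion directly for every tail weight $\gamma^{(j)}$. Your version is self-contained and makes explicit what ``strongly'' adds. The paper's is shorter but rests entirely on the cited example.

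You can shorten your proof by borrowing the paper's comparison idea. Since $(k+1)/(j+1)\le (j+k+1)/(j+1)\le k+1$, you get $\gamma^{(j)}_k\asymp\gamma_k$ with $j$-dependent constants. Hence $H^2(\gamma^{(j)})=H^2(\gamma)=H^\infty(\gamma)=H^\infty(\gamma^{(j)})$, and only the case $j=0$ of your estimate is needed.

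On your ``main obstacle'': as I understand Shields' definition, it requires only that each restriction $M_z|_{\M_j}$ be strictly cyclic, with no uniformity in $j$. Your $j$-dependent bound is then exactly what is needed. This reading is consistent with the paper: with your normalization $\gamma^{(j)}_0=1$, a uniform-in-$j$ version actually fails for this weight, so the cited example could not satisfy it. Indeed, for fixed $k$ one has $\gamma^{(j)}_k\to 1$ as $j\to\infty$, so the tails approach $H^2$, which is not an algebra. Test this with $p=\sum_{k=0}^{N}z^k$, for which $\|p^2\|_{H^2}/\|p\|_{H^2}^2$ grows like $N^{1/2}$. So the fallback you sketch could not rescue a uniform version, but it is not needed.
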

\begin{proof}
Let $\{\beta_n\}_{n=0}^\infty$ be as in Proposition \ref{P:R(D)}. For each $n$, set
\[
\widetilde\beta_n:=(n+1)^{3/2}.
\]
It is easy to check that
\[
\frac{\beta_n^{-1}}{2\sqrt{\pi}}\leq \widetilde\beta_n\leq 2\beta_n^{-1},\qquad \forall n\ge 0.
\]
Hence $H^2(\widetilde\beta)=H^2(\beta^{-1})=R(\mathbb D)$. By \cite[p99, Example 1]{Shi}, $M_z$ on $H^2(\widetilde\beta)$ is strongly strictly cyclic. By \cite[p60, Theorem 2'(b)]{Shi}, the operators $M_z$ on $H^2(\widetilde\beta)$ and on $R(\mathbb D)$ are similar. Therefore $M_z$ is strongly strictly cyclic on $R(\mathbb D)$.
\end{proof}

We shall also need the canonical left inverse of $M_z$, which will play the role of $S^*$ in the Hardy space setting. Let $B\in B(R(\mathbb D))$ denote the operator defined by
\[
B e_0=0,\qquad B e_{n+1}=\frac{\beta_{n+1}}{\beta_n} e_n,\quad n=0,1,2,\dots
\]
Then $B M_z=I$. We call $B$ the canonical left inverse of $M_z$. Since $\{\beta_n\}$ is decreasing,
\[
\|B\|=\sup_{n\ge 0}\frac{\beta_{n+1}}{\beta_n}\le 1,
\]
so $r(B)\le 1$. In particular, $r(T)r(B)<1$ for every $T\in B(R(\mathbb D))$ with $r(T)<1$, so that the results of the first part of this section apply to the pair $(A,B)=(M_z,B)$ whenever $r(T)<1$.

%

\section{An intransitivity criterion}\label{S:main}

In this section we prove the first of our main theorems, Theorem \ref{A}, an intransitivity criterion on the Sobolev disk algebra. We begin with the following elementary lemma.

\begin{lem}\label{L:polynomial}
Let $\varphi(z)=\sum_{k=0}^{\infty}c_k z^k\in R(\mathbb D)$ satisfy $\varphi(\lambda_0)=0$ for some $\lambda_0\in \overline{\mathbb D}$. Then for every $\varepsilon>0$, there exists a polynomial $p\in \mathbb C[z]$ such that $\|p-\varphi\|<\varepsilon$ and $p(\lambda_0)=0$.
\end{lem}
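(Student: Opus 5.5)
We take partial sums and correct them by subtracting the value at $\lambda_0$. Let $S_n(z)=\sum_{k=0}^n c_k z^k$ be the $n$-th partial sum of $\varphi$.

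\emph{Step 1: the partial sums converge to $\varphi$ in norm.} By Proposition \ref{P:R(D)}, $\{e_k=\beta_k z^k\}$ is an orthonormal basis of $R(\bD)$. The expansion of $\varphi$ in this basis is $\varphi=\sum_k (c_k/\beta_k)e_k$, and its coefficients are square-summable by part (2) of that proposition. Therefore
\[
\|\varphi-S_n\|^2=\sum_{k>n}\frac{|c_k|^2}{\beta_k^2}\longrightarrow 0 .
\]

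\emph{Step 2: the values at $\lambda_0$ converge.} Point evaluation at $\lambda_0\in\Dbar$ is a bounded functional on $R(\bD)$. This follows from the Sobolev embedding $\|f\|_{C(\Dbar)}\le M\|f\|_{W^{22}}$, or equivalently from $f(\lambda_0)=\langle f,k_{\lambda_0}\rangle$ in Proposition \ref{P:multiplication operator}. Part (2) of Proposition \ref{P:R(D)} also gives uniform convergence of $S_n$ on $\Dbar$. Either way,
\[
S_n(\lambda_0)\to\varphi(\lambda_0)=0 .
\]

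\emph{Step 3: correct the partial sum.} Set
\[
p_n:=S_n-S_n(\lambda_0)\cdot\mathbf 1 .
\]
This is a polynomial with $p_n(\lambda_0)=0$. It satisfies
\[
\|p_n-\varphi\|\le\|S_n-\varphi\|+|S_n(\lambda_0)|\,\|\mathbf 1\| .
\]
Here $\|\mathbf 1\|=\beta_0^{-1}$ is finite, and both terms tend to $0$ by Steps 1 and 2. Choosing $n$ large enough gives $\|p_n-\varphi\|<\varepsilon$, and $p=p_n$ is the required polynomial.

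There is no real obstacle. The only point needing care is the continuity of evaluation at boundary points $|\lambda_0|=1$. This is exactly where $R(\bD)$ is better than $H^2$, and the embedding into $C(\Dbar)$ already recorded in the paper handles it.
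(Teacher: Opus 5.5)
Your proof is correct and uses the same construction as the paper: you take the partial sum $S_N$ and subtract $S_N(\lambda_0)\mathbf 1$, then control the two error terms. The only difference is how you show $S_N(\lambda_0)\to 0$. The paper bounds $|S_N(\lambda_0)|=|\sum_{k>N}c_k\lambda_0^k|$ directly by Cauchy--Schwarz using $\sum_k\beta_k^2<\infty$, while you cite bounded point evaluation (or the uniform convergence in Proposition \ref{P:R(D)}(2)), which amounts to the same thing.
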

\begin{proof}
Set $\varepsilon_1=\frac{\varepsilon}{4}>0$. By Proposition \ref{P:R(D)} and the convergence of $\sum_{n\ge0}\beta_n^2$, there exists $N\in\mathbb N$ such that
\[
\left\|\sum_{k=N+1}^{\infty}c_k z^k\right\|^2
=\sum_{k=N+1}^\infty \frac{|c_k|^2}{\beta_k^2}<\varepsilon_1^2
\qquad\text{and}\qquad
\left(\sum_{k=N+1}^\infty \beta_k^2\right)^{1/2}\le \frac{1}{\|\mathbf 1\|}.
\]
Then, since $|\lambda_0|\le 1$,
\[
\left|\sum_{k=N+1}^{\infty}c_k\lambda_0^k\right|
\leq \sum_{k=N+1}^{\infty}|c_k|\leq \left(\sum_{k=N+1}^\infty\frac{|c_k|^2}{\beta_k^2}\right)^{1/2}
\left(\sum_{k=N+1}^\infty \beta_k^2\right)^{1/2} <\frac{\varepsilon_1}{\|\mathbf 1\|}.
\]
Define
\[
p(z):=\sum_{k=0}^{N} c_k z^k-\sum_{k=0}^{N}c_k\lambda_0^k.
\]
Then clearly $p(\lambda_0)=0$. Finally, since $\varphi(\lambda_0)=0$, we have
\[
\sum_{k=0}^N c_k\lambda_0^k=-\sum_{k=N+1}^{\infty}c_k\lambda_0^k;
\]
hence
\[
\|\varphi-p\|
=\left\|\sum_{k=N+1}^{\infty}c_k z^k-\sum_{k=N+1}^{\infty}c_k\lambda_0^k\right\|
\le \varepsilon_1+\varepsilon_1
=\frac{\varepsilon}{2}<\varepsilon.
\]

\end{proof}

We now turn to the proof of Theorem \ref{A}.

\begin{proof}[\bf Proof of Theorem \ref{A}]

\noindent{\bf Step 1.} Let $y={\mathbf 1}$, $x=g$, $A=M_z$, and let $B$ be the canonical left inverse of $A$ (so $BA=I$ and $B{\mathbf 1}=0$). Note that $y\in \ker A^*\cap \ker B$.

We claim that for $f(z)=\sum_{k=0}^\infty a_k z^k\in R(\mathbb D)$,
\[
\|{\mathbf 1}\|^2f(T)g=\sum_{n=0}^\infty \langle f,(B^*)^n {\mathbf 1}\rangle\, T^n g.
\]
Indeed,
\begin{align*}
\langle f,(B^*)^n {\mathbf 1}\rangle
&=\sum_{k=0}^\infty a_k \langle z^k,(B^*)^n {\mathbf 1}\rangle \\
&=\sum_{k=0}^\infty a_k \langle A^k {\mathbf 1},(B^*)^n {\mathbf 1}\rangle \\
&=\sum_{k=0}^{n-1} a_k \langle A^k {\mathbf 1},(B^*)^n {\mathbf 1}\rangle
+a_n\|{\mathbf 1}\|^2
+\sum_{k=n+1}^\infty a_k \langle A^k {\mathbf 1},(B^*)^n {\mathbf 1}\rangle \\
&=\sum_{k=0}^{n} a_k \langle B^{n-k}{\mathbf 1},{\mathbf 1}\rangle
+\sum_{k=n+1}^\infty a_k \langle {\mathbf 1},(A^*)^{k-n}{\mathbf 1}\rangle \\
&=a_n\|{\mathbf 1}\|^2.
\end{align*}
Thus $K_{x,y}f=\|\mathbf 1\|^2f(T)g$ for every $f\in R(\mathbb D)$; since $\|\mathbf 1\|\neq0$, the two sets
\[
\{f(T)g:f\in \mathcal M\}\qquad\text{and}\qquad \{K_{x,y}f:f\in \mathcal M\}
\]
have the same closed linear span. By Lemma \ref{L:k_{x,y}}, $\overline{K_{x,y}\mathcal M}\in \operatorname{Lat}(T)$.

\bigskip

It remains to verify the two hypotheses of Lemma \ref{L:k_{x,y}}, namely that the closed subspace $\overline{K_{x,y}\mathcal M}$ is nonzero and proper.

\medskip

\noindent{\bf Step 2.} $K_{x,y}\mathcal M\neq \{0\}$. If there exists $0\neq f\in \mathcal M$ with $f(T)g\neq 0$, then clearly $K_{x,y}\mathcal M\neq \{0\}$. Otherwise, for every $0\neq f\in \mathcal M$, we have $f(T)g=0$. Choose a unitary $U:H^2\to R(\mathbb D)$ and set $T_1:=U^*TU$. Then $T_1\in B(H^2)$ with $r(T_1)<1$. Since $f$ is holomorphic on a neighborhood of $\sigma(T_1)$, we have
\[
f(T_1)(U^*g)=U^*f(T)U(U^*g)=0.
\]
Equivalently, writing $f=\sum_{n=0}^\infty a_nz^n$ and $y:=\sum_{n=0}^\infty a_ne_n\in H^2$, we have
\[
K_{U^*g}(y)=f(T_1)(U^*g)=0,
\]
where
\[
K_{U^*g}:=\sum_{n=0}^\infty T_1^n(U^*g)\otimes e_n,
\]
and $\{e_n\}_{n=0}^\infty$ is the canonical ONB of $H^2$. Since $f\neq 0$, we have $y\neq0$, so $K_{U^*g}$ is not injective, i.e., $0\in\sigma_p(K_{U^*g})$. By \cite[Proposition 1.7]{FHJZ1}, $T_1$ has eigenvalues; hence $T_1$, and therefore $T$, is intransitive.

\medskip

\noindent{\bf Step 3.} $\overline{K_{x,y}\mathcal M}\neq \mathcal H$. It suffices to show that $\mathbf 1\notin \overline{K_{x,y}\mathcal M}=\overline{\{f(T)g:f\in \mathcal M\}}$. Suppose otherwise. Then there exist $f_n\in \mathcal M$ and $\lambda_n\in \overline{\mathbb D}$ such that $\varphi_n(\lambda_n)=0$ for $\varphi_n:=f_n(T)g$, and
\[
\lim_{n\to\infty}\|\varphi_n-\mathbf 1\|=0.
\]
By Lemma \ref{L:polynomial}, there exist polynomials $p_n$ with
\[
\|p_n-\varphi_n\|<\frac{1}{n},\qquad p_n(\lambda_n)=0.
\]
Hence $p_n=(z-\lambda_n)q_n$ for some polynomial $q_n$. Consequently,
\[
\lim_{n\to\infty}\|p_n-\mathbf 1\|=0.
\]
By Propositions \ref{P:Shields1} and \ref{P:Mz is strongly cyclic}, the norm of $R(\mathbb D)$ is equivalent to the supremum norm and $\|M_f\|\le C\|f\|$ for some constant $C$ and all $f\in R(\mathbb D)$; hence
\[
\lim_{n\to\infty} M_{p_n}=M_{\mathbf 1}.
\]
By Proposition \ref{P:multiplication operator}, $\ker(M_z^*-\overline{\lambda_n})\neq \{0\}$. Choose a unit vector $h_n\in \ker(M_z^*-\overline{\lambda_n})$. Then
\[
(z-\lambda_n)q_n h_n=(M_z-\lambda_n)q_n h_n\in \operatorname{ran}(M_z-\lambda_n)\subset \ker(M_z^*-\overline{\lambda_n})^\perp.
\]
So
\[
\langle (z-\lambda_n)q_n h_n,h_n\rangle=0.
\]
But
\[
\left|\langle (z-\lambda_n)q_n h_n,h_n\rangle-1\right|
=\left|\langle (M_{p_n}-M_{\mathbf 1})h_n,h_n\rangle\right|
\le \|M_{p_n}-M_{\mathbf 1}\|\to 0,
\]
since $\|h_n\|=1$ and $M_{\mathbf 1}=I$, a contradiction. Hence $\mathbf 1\notin \overline{K_{x,y}\mathcal M}$.
Therefore $T$ is intransitive.
\end{proof}

\begin{rem}\label{R:main theorem}
The proof of Theorem \ref{A} is constructive. Indeed, take $x=g$, $y={\mathbf 1}$, and $A=M_z$ with $B$ its canonical left inverse; then, as Lemma \ref{L:k_{x,y}} shows, the closed subspace $\overline{K_{x,y}(\mathcal M)}$ is invariant under $T$, and it has the same closed linear span as the set $\{f(T)g:f\in\mathcal M\}$. The two verifications above show that this subspace is neither $\{0\}$ nor $\mathcal H$. Thus the nontrivial invariant subspace of $T$ produced by the proof is determined by the given $M_z$-invariant manifold $\mathcal M$ and the vector $g$.
\end{rem}

\section{Operators with the ideal property}\label{S:ideal}

Motivated by Theorem \ref{A}, we isolate the following property of operators on $R(\bD)$.

\begin{defn}\label{D:ideal property}
Let $\mathfrak I$ be the set of all nontrivial ideals of $R(\mathbb D)$. We say that $T\in B(R(\mathbb D))$ has the \emph{ideal property} if there exist $0<\mu<1/r(T)$ (here we adopt the convention that $1/0=\infty$), a nontrivial ideal $\mathcal M\in\mathfrak I$, and a nonzero vector $g\in R(\mathbb D)$ such that for every $f\in\mathcal M$, the closed ideal $I(f(\mu T)g)$ generated by $f(\mu T)g$ is proper.
\end{defn}

\begin{lem}\label{L:invertible}
For $f\in R(\mathbb D)$, we have $f(z)\neq 0$ for all $z\in\overline{\mathbb D}$ if and only if $f$ is invertible in $R(\mathbb D)$.
\end{lem}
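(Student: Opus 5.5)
The plan is to prove the two implications separately. The "if" direction is elementary, and the "only if" direction rests on the Gelfand theory of the commutative Banach algebra $R(\mathbb D)$ that is already recorded in Propositions \ref{P:multiplication operator} and \ref{P:Shields1}.

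\emph{Invertible implies zero-free.} Suppose $fh=\mathbf 1$ for some $h\in R(\mathbb D)$. Every element of $R(\mathbb D)$ is a continuous function on $\overline{\mathbb D}$ by the Sobolev embedding, and multiplication in $R(\mathbb D)$ is pointwise. Hence $f(z)h(z)=1$ for all $z\in\overline{\mathbb D}$, so $f$ has no zero there.

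\emph{Zero-free implies invertible.} Assume $f(z)\neq0$ on $\overline{\mathbb D}$.
\begin{enumerate}
\item[(a)] Consider the principal ideal $f\,R(\mathbb D)$ and its closure $\mathcal J$. Since $R(\mathbb D)$ is an algebra under pointwise multiplication, $\mathcal J$ is a closed ideal. It is invariant under $M_z$, because $M_z(fh)=f(zh)$.
\item[(b)] Suppose $\mathcal J\neq R(\mathbb D)$. By Proposition \ref{P:Shields1}(3), applicable because $M_z$ is strictly cyclic on $R(\mathbb D)=H^2(\beta^{-1})$ (Proposition \ref{P:Mz is strongly cyclic}), all functions in $\mathcal J$ have a common zero in $\Delta(M_z)^-=\overline{\mathbb D}$. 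Since $f\in\mathcal J$, this contradicts the hypothesis that $f$ is zero-free. Therefore $\mathcal J=R(\mathbb D)$.
\item[(c)] It remains to pass from dense range to actual invertibility. The operator $M_f$ is bounded on $R(\mathbb D)$ by \eqref{Eq:equivalent norms}. By (b), $\mathbf 1$ is a limit of elements $fh_n$, so $\|M_{fh_n}-I\|\le C\|fh_n-\mathbf 1\|<1$ for large $n$.
\item[(d)] Fix such an $n$. Then $M_fM_{h_n}=M_{fh_n}$ is invertible in $B(R(\mathbb D))$ by the Neumann series. Let $u:=M_{fh_n}^{-1}\mathbf 1$ and set $h:=h_nu$.
\item[(e)] The algebra $\{M_z\}'=\{M_g:g\in R(\mathbb D)\}$ is commutative and norm closed, being a commutant. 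It contains $M_{fh_n}$, so it also contains the inverse $M_{fh_n}^{-1}$. Hence $M_{fh_n}^{-1}=M_u$, and $M_{fh_n}M_u=I$ gives $fh_nu=\mathbf 1$, that is, $fh=\mathbf 1$.
\end{enumerate}

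Equivalently, one can phrase the whole argument through Gelfand theory. By Proposition \ref{P:multiplication operator}(2), the maximal ideal space of $\{M_z\}'\cong R(\mathbb D)$ is $\overline{\mathbb D}$, and the characters are the point evaluations $f\mapsto f(z_0)$. An element of a commutative unital Banach algebra is invertible exactly when no character vanishes on it, which gives the lemma at once. I will present this as the main proof and keep the ideal-based argument (a)–(e) as the verification that the characters really are just point evaluations.

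The main point needing care is that the isomorphism between $R(\mathbb D)$ and $\{M_z\}'$ given by $f\mapsto M_f$ is a Banach algebra isomorphism. This follows from \eqref{Eq:equivalent norms}, so invertibility of $f$ in $R(\mathbb D)$ and invertibility of $M_f$ in $\{M_z\}'$ coincide.
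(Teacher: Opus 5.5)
Your proposal is correct. The ideal-based argument (a)--(e) is the paper's proof. The paper also takes $\overline{fR(\mathbb D)}$, notes that it is $M_z$-invariant, and uses Proposition~\ref{P:Shields1}(3): if this subspace were proper, its elements would have a common zero, contradicting that $f=f\cdot\mathbf 1$ is zero-free. It then passes from density to invertibility.

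Your steps (c)--(d) are slightly more careful than the paper's closing ``$\|fg-\mathbf 1\|<1/2$''. The Hilbert norm of $R(\mathbb D)$ is only equivalent to the multiplier norm (e.g.\ $\|\mathbf 1\|=\sqrt{\pi}$), so choosing $n$ with $C\|fh_n-\mathbf 1\|<1$ is the right way to invoke the Neumann series. Step (e) is superfluous: $u=M_{fh_n}^{-1}\mathbf 1$ already means $fh_nu=\mathbf 1$.

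The Gelfand-theoretic version you intend as the main proof is a genuinely different and shorter route. You apply the invertibility criterion for commutative unital Banach algebras to $\{M_z\}'=\{M_g:g\in R(\mathbb D)\}$ and read off the characters from Proposition~\ref{P:multiplication operator}(2). This needs only that $f\mapsto M_f$ is an algebra isomorphism onto $\{M_z\}'$, so that $M_f$ is invertible there iff $fh=\mathbf 1$ for some $h\in R(\mathbb D)$; the norm equivalence is not needed. The trade-off is that this route rests entirely on the cited identification of the maximal ideal space, while the paper's route uses only Shields' common-zero theorem.

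Finally, (a)--(e) is not really a ``verification that the characters are point evaluations''; Proposition~\ref{P:multiplication operator}(2) hands you that. If you wanted to derive it, only (b) matters: each maximal ideal is a proper closed ideal, hence lies in the kernel of some point evaluation. Steps (c)--(e) are absorbed by Gelfand theory.
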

\begin{proof}
Suppose that $f$ is invertible in $R(\mathbb{D})$; then clearly $f(z)\neq 0$ for all $z\in \overline{\mathbb{D}}$.
Conversely, suppose that $f(z)\neq 0$ for all $z\in\overline{\mathbb D}$.
Let $\mathcal M:=\overline{fR(\mathbb D)}$. Then $\mathcal M$ is a closed invariant subspace of $M_z$. If $\mathcal M\neq R(\mathbb D)$, then by Proposition \ref{P:Shields1}(3), all functions in $\mathcal M$ have a common zero; in particular, $f=f\cdot {\mathbf 1}$ has a zero in $\overline{\mathbb D}$, contradiction. Hence $\mathcal M=R(\mathbb D)$, so there exists $g\in R(\mathbb D)$ with $\|fg-\mathbf{1}\|<1/2$, implying $f$ is invertible.
\end{proof}

\begin{thm}\label{T:operator with i.p. is intransitive}
Let $T\in B(R(\mathbb D))$. If $T$ has the ideal property, then $T$ is intransitive.
\end{thm}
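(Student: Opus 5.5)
The plan is to reduce Theorem \ref{T:operator with i.p. is intransitive} to Theorem \ref{A}, applied to the rescaled operator $\mu T$. Since $\mu T$ and $T$ have the same invariant subspaces (for $\mu>0$), it suffices to show that $\mu T$ is intransitive.

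First, the choice $0<\mu<1/r(T)$ gives $r(\mu T)=\mu r(T)<1$. So $\mu T$ satisfies the spectral radius hypothesis of Theorem \ref{A}, and $f(\mu T)$ is well defined for every $f\in R(\mathbb D)$.

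Next, take the nontrivial ideal $\mathcal M\in\mathfrak I$ and the nonzero vector $g$ supplied by Definition \ref{D:ideal property}. By Proposition \ref{P:Shields1}(3), $\mathcal M$ is invariant under $M_z$; being a nontrivial ideal, it is a nontrivial invariant linear manifold of $M_z$. It remains to check the zero-set hypothesis: for every $f\in\mathcal M$, the function $h:=f(\mu T)g$ must vanish somewhere in $\overline{\mathbb D}$. By assumption the closed ideal $I(h)$ is proper. Suppose $h$ had no zero in $\overline{\mathbb D}$. Then Lemma \ref{L:invertible} makes $h$ invertible in $R(\mathbb D)$, so $\mathbf 1=h\cdot h^{-1}\in I(h)$ and $I(h)=R(\mathbb D)$, a contradiction. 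Hence $Z(f(\mu T)g)\neq\varnothing$ for all $f\in\mathcal M$. (Alternatively, Proposition \ref{P:Shields1}(3) applied to the proper closed ideal $I(h)$, which is $M_z$-invariant, directly gives a common zero of its elements, in particular of $h$.)

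Theorem \ref{A} now applies to $\mu T$ with this $g$ and $\mathcal M$, so $\mu T$, and therefore $T$, is intransitive. The main delicate point is the case $r(T)=0$ with the convention $1/0=\infty$: there $\mu$ is any positive number and $r(\mu T)=0<1$, so the argument is unchanged. Otherwise the proof is a direct translation of the ideal property into the hypotheses of Theorem \ref{A}, the only substantive input being Lemma \ref{L:invertible}, or equivalently the common-zero statement of Proposition \ref{P:Shields1}.
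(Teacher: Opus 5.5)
Your proposal is correct and matches the paper's proof: rescale to $\mu T$ so that $r(\mu T)<1$, use the common-zero statement of Proposition \ref{P:Shields1}(3) to turn properness of $I(f(\mu T)g)$ into $Z(f(\mu T)g)\neq\varnothing$, and apply Theorem \ref{A}. Your detour through Lemma \ref{L:invertible} rests on the same fact, since that lemma is itself proved from Proposition \ref{P:Shields1}(3).
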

\begin{proof}
Since $T$ has the ideal property, there exist $0<\mu<1/r(T)$, a nontrivial ideal $\mathcal M$, and $g\in R(\mathbb D)$ such that for every $f\in\mathcal M$, the ideal $I(f(\mu T)g)$ is proper. By Proposition \ref{P:Shields1}(3), for every $f\in\mathcal M$, $Z(f(\mu T)g)\neq\varnothing$. Since $\mathcal M$ is invariant under $M_z$ and $r(\mu T)<1$, Theorem \ref{A} shows that $\mu T$ is intransitive. As $\mu T$ and $T$ have the same invariant subspaces, $T$ is intransitive.
\end{proof}

The following example shows that an operator which acts as multiplication at a single point has the ideal property.

\begin{eg}\label{E:one point}
Let $T\in B(R(\mathbb D))$ and $0\neq g\in R(\mathbb D)$. Suppose there exist $\alpha_0\in\mathbb D$ and $\omega_0\in\mathbb C$ such that
\[
(T^n g)(\alpha_0)=\omega_0^n,\qquad n\ge 0.
\]
Then $T$ has the ideal property, and hence is intransitive.

Indeed, let
\[
T_1:=\frac{T}{\|T\|+|\omega_0|+1},\qquad
\mathcal M:=\{f\in R(\mathbb D): f(\frac{\omega_0}{\|T\|+|\omega_0|+1})=0\}.
\]
Clearly $\mathcal M$ is an ideal. For $f=\sum_{n=0}^\infty a_n z^n\in\mathcal M$,
\[
[f(T_1)g](\alpha_0)
=\sum_{n=0}^\infty a_n \frac{(T^n g)(\alpha_0)}{(\|T\|+|\omega_0|+1)^n}
=\sum_{n=0}^\infty a_n\left(\frac{\omega_0}{\|T\|+|\omega_0|+1}\right)^n=0.
\]
Thus $T$ has the ideal property, and Theorem \ref{T:operator with i.p. is intransitive} applies.
\end{eg}

The operators with the ideal property are universal; this is Theorem \ref{B}, whose proof we now give.

\begin{proof}[\bf Proof of Theorem \ref{B}]
Let $\mathcal N$ be a nontrivial invariant subspace of $T$. With respect to $\mathcal H=\mathcal N\oplus \mathcal N^\perp$, write
\[
T=\begin{bmatrix}
T_{11}&T_{12}\\
0&T_{22}
\end{bmatrix}.
\]
Choose a subspace $\widetilde{\mathcal N}\subset R(\mathbb D)$ as follows:
\begin{itemize}
\item if $\dim\mathcal N=d<\infty$, let $\widetilde{\mathcal N}=\bigvee\{z,\dots,z^d\}$;
\item if $\dim\mathcal N^\perp=d<\infty$, let $\widetilde{\mathcal N}=(\bigvee\{{\mathbf 1},\dots,z^{d-1}\})^\perp$;
\item if both are infinite-dimensional, let $\widetilde{\mathcal N}=\bigvee\{z^{2n-1}:n\in\mathbb N\}$.
\end{itemize}
Then $\dim \mathcal N=\dim \widetilde{\mathcal N}$ and $\dim \mathcal N^\perp=\dim \widetilde{\mathcal N}^\perp$, so there exists a block diagonal unitary $U=U_1\oplus U_2$ mapping $\widetilde{\mathcal N}$ onto $\mathcal N$ and $\widetilde{\mathcal N}^\perp$ onto $\mathcal N^\perp$.

\bigskip

Set $\widetilde T:=U^*TU$. Then $\widetilde T\in B(R(\mathbb D))$ and $\widetilde{\mathcal N}$ is invariant under $\widetilde T$, hence under $p(\widetilde T)$ for every polynomial $p$. Put $\nu:=1/(\|\widetilde T\|+1)$, so that $r(\nu\widetilde T)<1$. Since $f(\nu\widetilde T)$ is a norm limit of polynomials in $\nu\widetilde T$ for every $f\in R(\mathbb D)$, the subspace $\widetilde{\mathcal N}$ is invariant under $f(\nu\widetilde T)$ as well.

Let
\[
\mathcal M:=\bigvee\{h\psi:\ h\in R(\mathbb D),\ \psi\in\widetilde{\mathcal N}\}
\]
be the closed ideal generated by $\widetilde{\mathcal N}$. Since $\widetilde{\mathcal N}\neq\{0\}$ and every element of $\widetilde{\mathcal N}$ vanishes at $0$, the ideal $\mathcal M$ is nontrivial. Fix a nonzero $g\in\widetilde{\mathcal N}$, and let $f\in\mathcal M$, say $f=\underset{k\to \infty}{\lim} f_k$ with $f_k\in \mathcal{L}\{h\psi:h\in R(\mathbb D),\psi\in\widetilde{\mathcal N}\}$, the linear manifold generated by $\{h\psi:h\in R(\mathbb D),\psi\in\widetilde{\mathcal N}\}$. By the multiplicativity of the analytic functional calculus, $f_k(\nu\widetilde T)g\in\widetilde{\mathcal N}$ for every $k$, and hence $f(\nu\widetilde T)g\in\widetilde{\mathcal N}$ by closedness; in particular, $[f(\nu\widetilde T)g](0)=0$. Thus $I(f(\nu\widetilde T)g)$ is a proper ideal of $R(\mathbb D)$, and $\widetilde T$ has the ideal property.
\end{proof}

\begin{prop}\label{P:ideal property v.s. Banach algebra}
Let $T\in B(R(\mathbb D))$ have the ideal property and $\sigma_p(T)=\varnothing$. Then every operator in $\mathcal A(T)$, the norm-closed algebra generated by $T$, also has the ideal property.
\end{prop}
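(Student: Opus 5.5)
The plan is to extract from the ideal property of $T$ a \emph{single} nonzero closed subspace of $R(\mathbb D)$ that is invariant under $T$ and consists entirely of non-invertible functions. That subspace is then automatically invariant under every $S\in\mathcal A(T)$, which yields the ideal property for $S$ directly.

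\textbf{Step 1 (the invariant subspace).} Fix $\mu$, $\mathcal M$, $g$ as in Definition \ref{D:ideal property} for $T$, and set $\mathcal N:=\overline{\{f(\mu T)g: f\in\mathcal M\}}$. By Step 1 of the proof of Theorem \ref{A} (with $\mu T$ in place of $T$), $\mathcal N=\overline{K_{g,\mathbf 1}\mathcal M}$. By Lemma \ref{L:k_{x,y}} it is invariant under $\mu T$, hence under $T$.

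\textbf{Step 2 ($\mathcal N$ is nonzero).} This is where $\sigma_p(T)=\varnothing$ enters. Step 2 of the proof of Theorem \ref{A} shows the following: if $f(\mu T)g=0$ for every nonzero $f\in\mathcal M$, then, via \cite[Proposition 1.7]{FHJZ1}, $\mu T$ has an eigenvalue. That is impossible because $\sigma_p(\mu T)=\mu\,\sigma_p(T)=\varnothing$. Hence $\mathcal N\neq\{0\}$.

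\textbf{Step 3 (every element of $\mathcal N$ is non-invertible).} For each $f\in\mathcal M$, the closed ideal generated by $f(\mu T)g$ is proper, so $f(\mu T)g$ is not invertible in $R(\mathbb D)$. The set of non-invertible elements of the unital Banach algebra $R(\mathbb D)$ (under the equivalent algebra norm) is closed, because the invertibles form an open set. Therefore every $h\in\mathcal N$ is non-invertible. By Lemma \ref{L:invertible}, every $h\in\mathcal N$ has a zero in $\overline{\mathbb D}$, so the closed ideal $I(h)$ generated by $h$ lies in the proper ideal of functions vanishing at that point.

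\textbf{Step 4 (passing to $S\in\mathcal A(T)$).} Let $S\in\mathcal A(T)$, and choose any $0<\nu<1/r(S)$, for instance $\nu=1/(\|S\|+1)$.
\begin{itemize}
\item $S$ is a norm limit of polynomials in $T$, so $\mathcal N$ is invariant under $S$.
\item For every $f\in R(\mathbb D)$, $f(\nu S)$ is a norm limit of polynomials in $S$ (the Taylor partial sums converge in operator norm since $r(\nu S)<1$). Hence $f(\nu S)\mathcal N\subseteq\mathcal N$.
\end{itemize}
Now take a nonzero $g'\in\mathcal N$ (possible by Step 2) and any nontrivial ideal, say $\mathcal M':=\overline{zR(\mathbb D)}$. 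For every $f\in\mathcal M'$ we have $f(\nu S)g'\in\mathcal N$, so by Step 3 the ideal $I(f(\nu S)g')$ is proper. Thus $S$ has the ideal property.

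This argument also covers degenerate cases such as $S=\lambda I$ or $S=0$, since only the invariance of $\mathcal N$ is used.

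\textbf{Main obstacle.} The step needing the most care is Step 2, the nontriviality of $\mathcal N$. It requires re-running the argument of Step 2 of the proof of Theorem \ref{A} for $\mu T$, and checking that the hypothesis $\sigma_p(T)=\varnothing$ exactly rules out the alternative in which every $f(\mu T)g$ vanishes. The closedness argument in Step 3 is routine once the equivalence of the $R(\mathbb D)$ norm with a Banach algebra norm, via \eqref{Eq:equivalent norms}, is invoked.
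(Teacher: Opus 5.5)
Your proof is correct. It uses the same three ingredients as the paper: $\sigma_p(T)=\varnothing$ to produce a nonzero vector, the fact that $\mathcal M$ absorbs polynomial multiples, and the closedness of the non-invertible elements of $R(\mathbb D)$. It organizes them differently, however.

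The paper fixes the explicit vector $g_1=h(\mu T)g$ with $h\in\mathcal M$. It notes that $q(p_n(\mu T))g_1$ is non-invertible because $(q\circ p_n)h\in\mathcal M$. It then uses a double approximation: $f$ by polynomials, and $A/(\|A\|+1)$ by $p_n(\mu T)$. The second approximation needs the side condition $\|p_n(\mu T)\|<1$ and continuity of the functional calculus in the operator variable.

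You instead package everything into the closed $T$-invariant subspace $\mathcal N$ of non-invertible functions, which contains the cyclic subspace of $g_1$. This makes invariance under $\mathcal A(T)$ and under each $f(\nu S)$ automatic, so no limit in the operator variable is needed. It also shows that both the ideal and the nonzero vector $g'\in\mathcal N$ can be chosen freely; indeed $f(\nu S)g'$ is non-invertible for every $f\in R(\mathbb D)$.

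For nonvanishing, the paper proves directly that $f(\mu T)g\neq 0$ for every nonzero $f\in\mathcal M$, by factoring $f=qp$ near $\sigma(\mu T)$ and applying the spectral mapping theorem. You go through Step~2 of the proof of Theorem~\ref{A} and the cited Proposition~1.7 of the first paper. That is legitimate, since you need only one such $f$, but less self-contained.

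A minor citation point: the $\mu T$-invariance of $\mathcal N$ is not the conclusion of Lemma~\ref{L:k_{x,y}}. It comes from Lemma~\ref{L:kernel of Rosenblum}, or simply from $(\mu T)f(\mu T)g=(zf)(\mu T)g$ with $zf\in\mathcal M$.
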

\begin{proof}

\noindent{\bf Step 1.} Since $T$ has the ideal property, there exist $0<\mu<1/r(T)$, a nontrivial ideal $\mathcal M$, and a nonzero $g\in R(\mathbb D)$ such that the ideal $I(f(\mu T)g)$ is proper, equivalently $f(\mu T)g$ is not invertible, for every $f\in\mathcal M$.

\medskip

\noindent{\bf Step 2.} We claim that $f(\mu T)g\neq 0$ for all $0\neq f\in\mathcal M$. Indeed, if $f(\mu T)g=0$, then, since $\sigma(\mu T)$ is a compact subset of $\mathbb D$ and $f$ has only finitely many zeros there (counting multiplicity), we can write $f(z)=q(z)p(z)$ with $p$ a polynomial and $q$ holomorphic with no zeros on $\sigma(\mu T)$. By the spectral mapping theorem, $q(\mu T)$ is invertible, so $p(\mu T)g=0$. Here $p$ is nonconstant, since otherwise $g=0$; writing $p(z)=\prod_i(z-w_i)$ as a product of linear factors, we see that some $\mu T-w_i$ has a nonzero kernel, so that $w_i/\mu\in\sigma_p(T)$, a contradiction to $\sigma_p(T)=\varnothing$.

\medskip

\noindent{\bf Step 3.} Fix $h\in\mathcal M$ with $\|h\|=1$, and set $g_1:=h(\mu T)g$; by Step 2, $g_1\neq0$. Now take $A\in\mathcal A(T)$. There exist polynomials $p_n$ such that
\[
p_n(\mu T)\to \frac{A}{\|A\|+1}.
\]
For $f\in\mathcal M$, the analytic functional calculus gives
\[
f(p_n(\mu T))\to f\left(\frac{A}{\|A\|+1}\right).
\]
But for every polynomial $q$, the function $(q\circ p_n)h$ belongs to $\mathcal M$, so
\[
(q\circ p_n)(\mu T)h(\mu T)g=q(p_n(\mu T))g_1
\]
is not invertible. Approximating $f$ by polynomials in $R(\mathbb D)$ and using the closedness of the set of non-invertible elements, we conclude that $f(p_n(\mu T))g_1$ is not invertible for every $n$ with $\|p_n(\mu T)\|<1$. Since $p_n(\mu T)\to A/(\|A\|+1)$, the continuity of the analytic functional calculus gives
\[
f(p_n(\mu T))g_1\to f\left(\frac{A}{\|A\|+1}\right)g_1,
\]
which is therefore not invertible as well. Since $1/(\|A\|+1)<1/r(A)$, this shows that $A$ has the ideal property.

\end{proof}

\begin{prop}\label{P:ideal property v.s. commutant}
Let $T\in B(R(\mathbb D))$ have the ideal property and $\sigma_p(T)=\varnothing$, and suppose $\{T\}'=\mathcal A(T)^{\mathrm{SOT}}$, where $\mathcal A(T)^{\mathrm{SOT}}$ denotes the strong operator closure of $\mathcal A(T)$. Then every operator in $\mathcal A(T)^{\mathrm{SOT}}$ has the ideal property.
\end{prop}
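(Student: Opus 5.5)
We follow the scheme of Proposition \ref{P:ideal property v.s. Banach algebra}, with the norm approximation by polynomials replaced by SOT approximation. This is possible because the commutant hypothesis puts every SOT-limit of polynomials in $T$ back into the commutant of $T$.

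By the ideal property we fix $\mu$, a nontrivial ideal $\mathcal M$ and $g\neq0$ such that $f(\mu T)g$ is non-invertible in $R(\mathbb D)$ for every $f\in\mathcal M$. As in Step 2 of Proposition \ref{P:ideal property v.s. Banach algebra}, the assumption $\sigma_p(T)=\varnothing$ gives $g_1:=h(\mu T)g\neq0$ for a fixed unit vector $h\in\mathcal M$. Repeating Step 3 of that proposition, we get that $F(B)g_1$ is non-invertible for every $B\in\mathcal A(T)$ with $\|B\|<1$ and every $F\in\mathcal M$. Indeed, $F(B)$ is a norm limit of $q(p_n(\mu T))$, and $(q\circ p_n)h\in\mathcal M$.

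Now let $A\in\mathcal A(T)^{\mathrm{SOT}}$ and put $A_1:=A/(\|A\|+1)$, so that $\|A_1\|<1$. The key task is to pass non-invertibility of $F(B)g_1$ from $B\in\mathcal A(T)$ to $B=A_1$.

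The most direct route would be norm-bounded SOT approximation $B_\alpha\to A_1$ with $\|B_\alpha\|\le r<1$. Then $F(B_\alpha)g_1\to F(A_1)g_1$ in norm, since $F(B)=\sum a_kB^k$ and $\sum |a_k| r^k<\infty$. That would finish the proof, because the non-invertible elements form a closed set. The problem is that Kaplansky density is not available for the non-selfadjoint algebra $\mathcal A(T)$, so such bounded approximants need not exist.

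We therefore plan to exploit the commutant hypothesis more concretely, and to argue with the zero sets rather than with SOT limits of vectors. For every $B\in\mathcal A(T)$ with $\|B\|<1$, the functions $F(B)g_1$, $F\in\mathcal M$, have a zero in $\overline{\mathbb D}$ (Proposition \ref{P:Shields1}). By the argument of Theorem \ref{A}, the closed span $\mathcal L_B$ of $\{F(B)g_1:F\in\mathcal M\}$ does not contain $\mathbf 1$. More precisely, by Step 3 of that proof, $\mathcal L_B$ stays at a fixed positive distance $\delta$ from $\mathbf 1$. The value of $\delta$ comes only from the uniform estimate $\|M_f\|\le C\|f\|$ and the eigenvectors $h_\lambda$ of $M_z^*$, so it is independent of $B$.

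Next, take a net $p_\alpha(\mu T)\to A_1$ in SOT. For a fixed $F$ we will try to show that $F(A_1)g_1$ lies in the weak closure of $\bigcup_\alpha\mathcal L_{B_\alpha}$, with $B_\alpha$ suitably rescaled polynomials. We will then use the fact that the set $\{\varphi:\ \text{there is }\lambda\in\overline{\mathbb D}\ \text{with}\ \varphi(\lambda)=0\}$ is closed under the weak limits that arise here. This holds because point evaluations are weakly continuous and $\overline{\mathbb D}$ is compact: if $\varphi_\alpha(\lambda_\alpha)=0$, $\varphi_\alpha\to\varphi$ weakly and boundedly, and $\lambda_\alpha\to\lambda$ along a subnet, then $\varphi(\lambda)=0$, since bounded weak convergence in $R(\mathbb D)$ implies uniform convergence on $\overline{\mathbb D}$ by the compactness of the embedding. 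Consequently $F(A_1)g_1$ has a zero, so $I(F(A_1)g_1)$ is proper, and $A$ has the ideal property with the same $\mathcal M$, the vector $g_1$, and $1/(\|A\|+1)<1/r(A)$.

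The main obstacle is obtaining boundedness of the approximants $F(B_\alpha)g_1$. The first thing we will try is to use $\{T\}'=\mathcal A(T)^{\mathrm{SOT}}$ to write $A_1$ as the SOT-limit of $M$-bounded polynomial approximants. If needed, we will compose these with a contraction-preserving rational map of the disk, which keeps powers uniformly bounded.
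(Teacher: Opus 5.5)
Your proposal has a genuine gap at exactly the step you call the key task: passing non-invertibility of $F(B)g_1$ from $B\in\mathcal A(T)$ to $B=A_1$. Everything after ``take a net $p_\alpha(\mu T)\to A_1$ in SOT'' is a plan, not an argument. You never show that $F(A_1)g_1$ lies in the weak closure of $\bigcup_\alpha\mathcal L_{B_\alpha}$.

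Any argument along your lines needs control of $F(B_\alpha)g_1$, and SOT convergence $B_\alpha\to A_1$ does not give it. The approximants $B_\alpha$ may have large spectral radius, so $F(B_\alpha)$ need not even be defined. Moreover, powers of an unbounded SOT-convergent net need not converge. The hypothesis $\{T\}'=\mathcal A(T)^{\mathrm{SOT}}$ is an equality of sets and yields no norm bounds, so your proposed remedy of ``$M$-bounded polynomial approximants'' does not follow from it. The uniform distance $\delta$ from $\mathbf 1$ is correct but beside the point: a norm limit of functions with a zero in $\overline{\mathbb D}$ already has a zero, because the norm of $R(\mathbb D)$ dominates the sup norm.

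The missing idea is to apply the functional calculus \emph{before} approximating, rather than approximating $A_1$ and then applying $F$ to the approximants.

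For $A\in\mathcal A(T)^{\mathrm{SOT}}\subseteq\{T\}'$ and $F\in\mathcal M$, the operator $F(A_1)$ is a norm limit of polynomials in $A_1$. So it commutes with $T$ and hence lies in $\{T\}'=\mathcal A(T)^{\mathrm{SOT}}$.

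Approximate $F(A_1)$ itself in SOT at the single vector $g_1$. This gives polynomials $p_n$ with $p_n(\mu T)g_1\to F(A_1)g_1$ in norm. Now $p_n(\mu T)g_1=p_n(\mu T)h(\mu T)g=(p_nh)(\mu T)g$, and $p_nh\in\mathcal M$ because $\mathcal M$ is an ideal. So each $p_n(\mu T)g_1$ is non-invertible. Since the non-invertible elements of $R(\mathbb D)$ form a closed set, $F(A_1)g_1$ is non-invertible as well.

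This is exactly why $g_1=h(\mu T)g$ is introduced: the approximating polynomial is absorbed into $\mathcal M$. As a result, no bounded approximation, weak limits or compact embeddings are needed. Taking $\mu'=1/(\|A\|+1)$, the ideal $\mathcal M$ and the vector $g_1\neq0$, this is the paper's proof.
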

\begin{proof}

 As in the proof of Proposition \ref{P:ideal property v.s. Banach algebra}, there exist $0<\mu<1/r(T)$, a nontrivial ideal $\mathcal M$, a nonzero $g\in R(\mathbb D)$ and a vector $g_1=h(\mu T)g\neq0$, with $h\in\mathcal M$ and $\|h\|=1$, such that $f(\mu T)g$ is not invertible for every $f\in\mathcal M$.

\medskip

 Take $A\in \{T\}'$ and $f\in\mathcal M$. By the analytic functional calculus, $f\left(\frac{A}{\|A\|+1}\right)$ commutes with $T$. Since $\{T\}'=\mathcal A(T)^{\mathrm{SOT}}$, there exist polynomials $p_n$ such that
\[
p_n(\mu T)\xrightarrow{\mathrm{SOT}} f\left(\frac{A}{\|A\|+1}\right).
\]
Then
\[
p_n(\mu T)h(\mu T)g=(p_n h)(\mu T)g,
\]
and since $p_n h\in\mathcal M$, the right-hand side is not invertible. Passing to the limit, which is a norm limit on the fixed vector $g_1$, and using the closedness of the set of non-invertible elements of $R(\mathbb D)$, we conclude that the operator $f\left(\frac{A}{\|A\|+1}\right)g_1$ is not invertible. Since $1/(\|A\|+1)<1/r(A)$, this shows that $A$ has the ideal property.

\end{proof}

\section{An application to Pearcy's problem}\label{S:Pearcy}

In this section we shall apply the methods developed above to a problem of Pearcy.

\begin{problem}[Pearcy]\cite{Pea78}\label{Pearcy Problem}
Suppose $N$ is a diagonal normal operator whose eigenvalues constitute a dense subset of the unit disc $\mathbb{D}$.
Show that every operator of the form $N+F$ has a nontrivial invariant subspace, where $F$ is an operator of rank one.
\end{problem}

Pearcy's problem has attracted much attention. For a rank-one perturbation $T=D+f\otimes g$ of a diagonal operator $D$, Foias, Jung, Ko and Pearcy \cite{FJKP07} obtained a nontrivial hyperinvariant subspace whenever $T$ is not a scalar multiple of the identity and the Fourier coefficients $\{\alpha_{n}\}$ and $\{\beta_{n}\}$ of $f$ and $g$ satisfy $\sum_{n}(|\alpha_{n}|^{2/3}+|\beta_{n}|^{2/3})<\infty$. Fang and Xia \cite{FX12} weakened this summability condition to $\sum_{n}(|\alpha_{n}|+|\beta_{n}|)<\infty$ and extended the result to finite-rank perturbations. Gallardo-Guti\'errez and Gonz\'alez-Do\~na \cite{GGD24} showed more recently that a vanishing Fourier coefficient of $f$ or $g$, or a logarithmic summability condition on the coefficients, already suffices, with analogous results for finite-rank perturbations. In contrast to these approaches, the method of this section exploits the ideal property of the operator and the rationality of the generating function of its moments.

We first need the following observation concerning the cyclic vectors of diagonal operators; recall that $x\in\H$ is called a \emph{cyclic vector} of $T\in B(\H)$ if $\bigvee\{T^nx:n\ge0\}=\H$.

\begin{prop}\label{P:cyclic vectors}
Suppose that $D\in B(\mathcal{H})$ is a diagonal operator and $x\in \mathcal{H}$.
Then $x$ is a cyclic vector of $D$ if and only if
$x$ is a cyclic vector of $D^*$.
\end{prop}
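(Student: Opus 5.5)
Write $D=\sum_n \lambda_n\, e_n\otimes e_n$ with respect to an orthonormal basis $\{e_n\}$, so that $D^*=\sum_n\overline{\lambda_n}\, e_n\otimes e_n$. The plan is to exhibit an antiunitary (conjugate-linear isometric bijection) $J$ of $\mathcal H$ that intertwines $D$ and $D^*$. Then $J$ carries the orbit of $x$ under $D$ onto the orbit of $Jx$ under $D^*$. Finally, we relate $Jx$ back to $x$ by a diagonal unitary that commutes with $D^*$. Since $D$ and $D^*$ play symmetric roles (because $D^{**}=D$), it suffices to prove one implication.

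\textbf{Step 1.} Define the conjugation $J(\sum_n c_ne_n)=\sum_n\overline{c_n}e_n$. It is conjugate linear, isometric, involutive, and satisfies $JD=D^*J$. Indeed,
\[
JD\Big(\sum c_ne_n\Big)=\sum\overline{\lambda_nc_n}e_n=D^*J\Big(\sum c_ne_n\Big).
\]
Hence $J(D^kx)=D^{*k}Jx$ for all $k$. Because $J$ is a conjugate-linear isometric bijection, it maps closed linear spans onto closed linear spans. Consequently
\[
J\Big(\bigvee_k D^kx\Big)=\bigvee_k D^{*k}Jx,
\]
so $x$ is cyclic for $D$ if and only if $Jx$ is cyclic for $D^*$.

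\textbf{Step 2.} Write $x=\sum x_ne_n$ and choose unimodular scalars $u_n$ with $u_n\overline{x_n}=x_n$, taking $u_n=1$ when $x_n=0$. Let $U$ be the diagonal unitary $Ue_n=u_ne_n$. Then $UJx=x$. Since $U$ is diagonal in the same basis, it commutes with $D^*$. Therefore
\[
U\Big(\bigvee_k D^{*k}Jx\Big)=\bigvee_k D^{*k}x,
\]
and so $Jx$ is cyclic for $D^*$ if and only if $x$ is. Combining this with Step 1 gives the equivalence.

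There is no real obstacle here. The only point requiring care is to check that a conjugate-linear isometry preserves closed linear spans: $J$ maps complex linear combinations to complex linear combinations, using conjugated coefficients, and it is a homeomorphism. It is also worth remarking that the multiplicities of the eigenvalues cause no trouble, since the whole argument is carried out coordinatewise in the fixed eigenbasis.
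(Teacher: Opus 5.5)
Your argument is correct and is essentially the paper's proof in coordinate-free form. The paper's annihilating vector $z$, with $\gamma_n=\frac{\alpha_n^2}{|\alpha_n|^2}\overline{\beta_n}$, is exactly $UJy$ for your antiunitary $UJ$ (which satisfies $UJD=D^*UJ$ and $UJx=x$); the paper applies this map to a vector orthogonal to the $D$-orbit of $x$, whereas you apply it to the orbit itself. Working directly with the cyclic subspace lets you handle vanishing coefficients of $x$ without the paper's separate case, and gives both implications at once.
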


\begin{proof}
Suppose that $x=\sum_{n=0}^\infty \alpha_n e_n$ is not a cyclic vector of $D=\sum_{n=0}^\infty \lambda_n e_n\otimes e_n$. We show that $x$ is not a cyclic vector of $D^*$ either. If $\alpha_{n_1}=0$ for some $n_1\geq 0$, then $e_{n_1}\perp (D^*)^kx$ for all $k\ge0$, and we are done. Hence we may assume that $\alpha_n\neq 0$ for all $n\geq 0$.

Then there exists a unit vector $y=\sum_{n=0}^\infty \beta_n e_n\in \mathcal{H}$ such that $\langle D^kx, y\rangle=0$ for all $k\ge0$, that is,
\[
\sum_{n=0}^\infty \lambda_n^k \alpha_n \overline{\beta_n}=0,\qquad k\ge 0.
\]
Define $z=\sum_{n=0}^\infty \gamma_n e_n$, where $\gamma_n=\frac{\alpha_n^2}{|\alpha_n|^2}\overline{\beta_n}$. Then $z$ is a unit vector and for $k\geq 0$,
\[\langle (D^*)^kx,z\rangle=\sum_{n=0}^\infty \overline{\lambda_n}^k \alpha_n \overline{\gamma_n}=\overline{\sum_{n=0}^\infty \lambda_n^k \overline{\alpha_n}\gamma_n}=\overline{\sum_{n=0}^\infty \lambda_n^k \alpha_n \overline{\beta_n}}=0.\]
Hence $x$ is not a cyclic vector of $D^*$, and the same argument with $D^*$ in place of $D$ gives the converse.
\end{proof}

We now apply these observations to rank-one perturbations of diagonal operators on $R(\mathbb D)$.

\begin{lem}\label{L:Pearcy}
Let $T=D+f\otimes g\in B(R(\mathbb D))$, where $f,g\in R(\mathbb D)$ and
\[
D=\sum_{n=0}^\infty \alpha_n (\beta_nz^n)\otimes (\beta_nz^n)
\]
is the diagonal operator with respect to the orthonormal basis $\{\beta_nz^n\}_{n=0}^\infty$ of $R(\mathbb{D})$. Suppose that either $f$ or $g$ is not a cyclic vector of $D$. Then $T$ or $T^*$ has the ideal property. In particular, $T$ is intransitive.
\end{lem}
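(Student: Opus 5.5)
The idea is to use the cyclic subspace of $D$ generated by the non-cyclic vector, which is automatically invariant under $T$ (or $T^*$), and then to repeat the proof of Theorem \ref{B} for that subspace. We may assume $f\neq0$ and $g\neq0$: if $f=0$ or $g=0$, then $T=D$ is diagonal, each $e_n$ is an eigenvector, and $T$ is intransitive.

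\emph{Case 1: $f$ is not cyclic for $D$.} Put
\[
\mathcal L:=\bigvee\{D^kf:k\ge0\},
\]
which is a nonzero proper subspace. For $h\in\mathcal L$ we have $Th=Dh+\langle h,g\rangle f\in\mathcal L$, because $D\mathcal L\subset\mathcal L$ and $f\in\mathcal L$. So $\mathcal L\in\operatorname{Lat}T$ is nontrivial, and $T$ is already intransitive.

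To obtain the ideal property as well, I would follow the proof of Theorem \ref{B}. Since $\mathcal L$ is a subspace of $R(\mathbb D)$ itself, I cannot simply unitarily move it inside $\{h:h(0)=0\}$. Instead, I would argue directly from Definition \ref{D:ideal property}, using Proposition \ref{P:Shields1}(3):
\begin{itemize}
\item If $\mathcal L$ has a common zero $\lambda\in\overline{\mathbb D}$, let $\mathcal M$ be the closed ideal generated by $\mathcal L$. Take $g_0=f$ and $\mu=1/(\|T\|+1)$. Then for every $\varphi\in\mathcal M$ the vector $\varphi(\mu T)f$ lies in $\mathcal L$, by multiplicativity of the functional calculus and closedness, exactly as in the proof of Theorem \ref{B}. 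Hence it vanishes at $\lambda$, and $I(\varphi(\mu T)f)$ is proper.
\item If $\mathcal L$ has no common zero, then the ideal generated by $\mathcal L$ is all of $R(\mathbb D)$, and this argument fails. In that situation I would pass to $T^*$. The subspace $\mathcal L^\perp$ is a nontrivial element of $\operatorname{Lat}T^*$, and I would test whether its functions have a common zero.
\end{itemize}
This dichotomy, together with the symmetry given by Proposition \ref{P:cyclic vectors}, is where the ``$T$ or $T^*$'' in the statement comes from.

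\emph{Case 2: $g$ is not cyclic for $D$.} By Proposition \ref{P:cyclic vectors}, $g$ is not cyclic for $D^*$. Now
\[
T^*=D^*+g\otimes f,
\]
and the Case 1 argument with $(D^*,g)$ in place of $(D,f)$ shows that
\[
\bigvee\{D^{*k}g:k\ge0\}
\]
is a nontrivial invariant subspace of $T^*$. Its orthogonal complement is a nontrivial invariant subspace of $T$. The ideal-property statement then transfers in the same way as in Case 1.

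\emph{Main obstacle.} The hard step is upgrading ``has a nontrivial invariant subspace'' to ``has the ideal property'' on the given space $R(\mathbb D)$ without a unitary change of model. This requires a vector subspace, namely $\mathcal L$ or $\mathcal L^\perp$, whose elements share a common zero.

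My first attempt would use the diagonal structure. If some coordinate $\langle f,e_{n_0}\rangle=0$, then every $D^kf$ has zero $e_{n_0}$-coefficient, i.e. its $z^{n_0}$-coefficient vanishes. With $n_0=0$ this means vanishing at $0$. In the general case I would instead look at the orthogonal complement: $\mathcal L^\perp$ contains the vector $y$ from the proof of Proposition \ref{P:cyclic vectors}. I would try to exploit the fact that $\mathcal L^\perp$ is $D^*$-invariant and spanned by coordinate-supported pieces.

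If this common-zero step fails, I would fall back on intransitivity alone, which is all that the ``in particular'' clause of the statement needs. That conclusion follows from the invariant subspaces $\mathcal L$ and $\big(\bigvee\{D^{*k}g:k\ge0\}\big)^\perp$ exhibited above.
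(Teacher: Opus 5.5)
\textbf{Genuine gap: the ideal property is not proved.} Your intransitivity argument is correct, and more direct than the paper's. When $f$ is not cyclic, $\bigvee\{D^kf:k\ge0\}$ is $T$-invariant. When $g$ is not cyclic, Proposition \ref{P:cyclic vectors} shows that $\big(\bigvee\{D^{*k}g:k\ge0\}\big)^\perp$ is $T$-invariant. However, the main assertion of the lemma, that $T$ or $T^*$ has the ideal property, is left unproved. Your first bullet only covers the special case where $\mathcal L$ has a common zero. The second bullet (``test whether $\mathcal L^\perp$ has a common zero'') is not an argument, and you concede you may have to fall back on intransitivity alone. Your reduction for $f=0$ or $g=0$ also yields only intransitivity.

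\textbf{The missing idea.} You do not need an invariant subspace all of whose functions share a zero. Definition \ref{D:ideal property} only asks that $F(\mu T)h$ be non-invertible for $F$ in one chosen nontrivial ideal $\mathcal M$, and that ideal can supply the zero. Here is how the paper does it in the $g$-case:
\begin{enumerate}
\item Take a unit vector $h\perp D^{*k}g$ for all $k$. It lies in the very subspace $\big(\bigvee D^{*k}g\big)^\perp$ you found.
\item Since $\langle D^nh,g\rangle=0$, induction gives $T^nh=D^nh$. So $T$ agrees with $D$ on the $D$-orbit of $h$, and $F(\nu T)h=F(\nu D)h$ for all $F\in R(\mathbb D)$, where $\nu=1/(\|T\|+\|D\|+1)$.
\item Choose $\mathcal M=(z-\nu\alpha_0)R(\mathbb D)$. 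Because $F(\nu D)=\sum_nF(\nu\alpha_n)\,e_n\otimes e_n$, the $e_0$-coefficient of $F(\nu D)h$ is $F(\nu\alpha_0)\langle h,e_0\rangle=0$.
\item Since $e_0=\beta_0\mathbf 1$, having zero $e_0$-coefficient means vanishing at $0$. Hence $[F(\nu T)h](0)=0$ for every $F\in\mathcal M$, and $T$ has the ideal property.
\end{enumerate}
The $f$-case is the same argument applied to $T^*=D^*+g\otimes f$ with $h\in\mathcal L^\perp$; that is where your instinct to pass to $T^*$ and $\mathcal L^\perp$ was pointing. It gives the ideal property for $T^*$. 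This argument also handles $f=0$ or $g=0$ uniformly, since any unit vector $h$ then works.
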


\begin{proof}
Suppose first that $g$ is not a cyclic vector of $D$. By Proposition \ref{P:cyclic vectors}, $g$ is not a cyclic vector of $D^*$ either, so there exists a unit vector $h\in R(\mathbb{D})$ such that
\[
\langle D^nh,g\rangle=\langle h,(D^*)^ng\rangle=0,\qquad n\in \mathbb{Z}_{\geq 0}.
\]
Put $\nu:=1/(\|T\|+\|D\|+1)$. We claim that
\[
T^nh=D^nh,\qquad n\ge0.
\]
Indeed, assuming $T^nh=D^nh$, we get
\[
T^{n+1}h=T(T^nh)=T(D^n h)=D^{n+1}h+\langle D^n h,g\rangle f=D^{n+1}h.
\]
Hence
\[
(\nu T)^nh=(\nu D)^nh,\qquad n\ge0.
\]
Set
\[
\mathcal M=(z-\nu\alpha_0)R(\mathbb{D}).
\]
Then $\mathcal M$ is an ideal of $R(\mathbb{D})$. Now for
\[
F=(z-\nu\alpha_0)F_1\in \mathcal M,
\]
we have
\[
F(\nu T)h=F(\nu D)h=\nu\,F_1(\nu D)(D-\alpha_0 I)h.
\]
Note that
\[
D-\alpha_0 I=\sum_{n=1}^\infty \alpha_n (\beta_nz^n)\otimes (\beta_nz^n),
\]
and hence $[(D-\alpha_0 I)h](0)=0$. Consequently,
\[
[F(\nu T)h](0)=0.
\]
Since $\nu<1/r(T)$ and $\mathcal M$ is a nontrivial ideal, this means that $T$ has the ideal property.

If instead $f$ is not a cyclic vector of $D$, then by Proposition \ref{P:cyclic vectors} again, $f$ is not a cyclic vector of $D^*$; since $T^*=D^*+g\otimes f$, the same argument shows that $T^*$ has the ideal property.

Finally, by Theorem \ref{T:operator with i.p. is intransitive} applied to $T$ or to $T^*$, the operator $T$ is intransitive.

\end{proof}

The following corollary records two simple instances in which the hypothesis of Lemma \ref{L:Pearcy} is satisfied.

\begin{cor}\label{C:Pearcy}
Let $T=D+f\otimes g$, where $D=\sum_{n=0}^\infty \alpha_n (\beta_nz^n)\otimes (\beta_nz^n)\in B(R(\mathbb{D}))$ is the diagonal operator with respect to the orthonormal basis $\{\beta_nz^n\}_{n=0}^\infty$ of $R(\mathbb{D})$, and $g=\sum_{n=0}^\infty c_nz^n\in R(\mathbb{D})$.
If $D$ has an eigenvalue of multiplicity greater than one, or if some coefficient $c_n$ vanishes, then $T$ has the ideal property. In particular, $T$ is intransitive.
\end{cor}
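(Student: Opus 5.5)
The plan is to reduce both cases to the proof of Lemma \ref{L:Pearcy} in the branch where $g$ is not a cyclic vector of $D$. That branch produces the ideal property for $T$ itself, not for $T^*$, which is exactly what is claimed here. So it suffices to show that in each case $g$ fails to be cyclic for $D$, or more precisely that there is a unit vector $h$ with $\langle D^nh,g\rangle=0$ for all $n\ge0$. I would then rerun the argument of Lemma \ref{L:Pearcy} with that $h$.

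\emph{Case of a vanishing coefficient.} If $c_m=0$, take $h=e_m=\beta_mz^m$. Since $g=\sum c_n z^n=\sum (c_n/\beta_n)e_n$, we get
\[
\langle D^ne_m,g\rangle=\alpha_m^n\,\overline{c_m}/\beta_m=0
\]
for all $n$. By induction, as in Lemma \ref{L:Pearcy}, $T^nh=D^nh=\alpha_m^n e_m$.

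\emph{Case of a repeated eigenvalue.} If $\alpha_j=\alpha_k$ with $j\neq k$, choose a nonzero $h=ae_j+be_k$ orthogonal to $g$; this is a single linear condition in two unknowns. Then $D^nh=\alpha_j^nh$, so $\langle D^nh,g\rangle=\alpha_j^n\langle h,g\rangle=0$, and hence $T^nh=\alpha_j^nh$.

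In both cases $h$ is an eigenvector of $T$ for some eigenvalue $\lambda$ ($\lambda=\alpha_m$ or $\lambda=\alpha_j$). Instead of copying the index-$0$ trick of Lemma \ref{L:Pearcy}, which needs $\alpha_0$ specifically, I would argue as in Example \ref{E:one point}. Put $\nu=1/(\|T\|+1)$ and $\mathcal M=(z-\nu\lambda)R(\mathbb D)$. For $F\in\mathcal M$ we get $F(\nu T)h=F(\nu\lambda)h=0$, so the ideal generated is $\{0\}$, which is proper.

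The main obstacle is to make sure this satisfies Definition \ref{D:ideal property}: $\nu<1/r(T)$, $\mathcal M$ must be a nontrivial ideal, and $\nu\lambda\in\mathbb D$. Each point needs a short check. First, $r(T)\le\|T\|<\|T\|+1$ gives $\nu<1/r(T)$. Second, $|\nu\lambda|\le\|T\|/(\|T\|+1)<1$, so $\mathcal M$ is a proper nonzero closed ideal; it is closed because $M_z-\nu\lambda$ is bounded below, by Proposition \ref{P:multiplication operator}, so its range is closed. Third, the zero vector $F(\nu T)h$ generates the proper ideal $\{0\}$. Theorem \ref{T:operator with i.p. is intransitive} then gives intransitivity, and here it is in any case immediate from the eigenvector.
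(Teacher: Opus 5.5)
Your argument is correct, and it differs from the paper's in two places.

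\emph{How the vector $h$ is found.} The paper only checks that $g$ is not a cyclic vector of $D$. A vanishing $c_n$ makes $\beta_nz^n$ orthogonal to every $D^kg$. A repeated eigenvalue $\alpha_i=\alpha_j$ with $c_ic_j\neq0$ forces the compressions of the $D^kg$ to $\bigvee\{\beta_iz^i,\beta_jz^j\}$ into a line. The paper then reruns the proof of Lemma~\ref{L:Pearcy}, where Proposition~\ref{P:cyclic vectors} turns non-cyclicity into a unit vector $h$ with $\langle D^nh,g\rangle=0$. You instead write $h$ down explicitly as an eigenvector of $D$ orthogonal to $g$. This bypasses Proposition~\ref{P:cyclic vectors} and shows that $h$ is an eigenvector of $T$ itself.

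\emph{How the ideal property is witnessed.} The paper uses the ideal $(z-\nu\alpha_0)R(\mathbb D)$; the point is that $F(\nu T)h=\nu F_1(\nu D)(D-\alpha_0I)h$ always vanishes at $0$. You use $(z-\nu\lambda)R(\mathbb D)$ and witness the property in its degenerate form $F(\nu T)h=F(\nu\lambda)h=0$, much as in Example~\ref{E:one point}.

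\emph{What each buys.} Your route is more elementary and makes explicit that in both cases of the corollary $\sigma_p(T)\neq\varnothing$. So intransitivity is immediate, and the ideal property is satisfied only trivially. The paper's route is more modular: it obtains the corollary as an instance of the general non-cyclicity criterion of Lemma~\ref{L:Pearcy}.

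\emph{A remark on your motivation.} The index-$0$ trick does not depend on any special feature of $\alpha_0$ or of $h$, since $D-\alpha_0I$ always annihilates the $e_0$-coordinate. It would therefore have applied verbatim to your $h$, so your change of ideal is a valid alternative rather than a necessity.
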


\begin{proof}
In either case $g$ is not a cyclic vector of $D$. Indeed, if $c_n=0$ for some $n$, then the basis vector $\beta_nz^n$ is orthogonal to $D^kg$ for all $k\ge0$. If $D$ has an eigenvalue of multiplicity greater than one, say $\alpha_i=\alpha_j$ with $i\neq j$, and if both $c_i$ and $c_j$ are nonzero, then the projections of the vectors $D^kg$, $k\ge0$, onto $\bigvee\{\beta_iz^i,\beta_jz^j\}$ all lie in the one-dimensional space spanned by $\frac{c_i}{\beta_i}\beta_iz^i+\frac{c_j}{\beta_j}\beta_jz^j$, so that $g$ is not cyclic either.

Consequently, by the proof of Lemma \ref{L:Pearcy}, $T$ has the ideal property. In particular, $T$ is intransitive.
\end{proof}

The results above reduce Pearcy's problem, in the setting of diagonal operators on $R(\bD)$, to the verification of the ideal property. To verify this property in more quantitative terms, we record a general criterion for intransitivity in terms of the generating function of the moments; it is a general form of \cite[Theorem 4.4]{FHJZ1}, in which the restrictions $\mathcal H=H^2$ and $r(T)<1$ are removed.

\begin{thm}\label{T:FHJZ1}
Let $T\in B(\mathcal H)$. Then $T$ is intransitive if and only if there exist nonzero $\xi,\eta\in\mathcal H$ such that
\[
\varphi(z):=\sum_{n=0}^\infty \langle T^n\xi,\eta\rangle z^n
\]
is a rational function.
\end{thm}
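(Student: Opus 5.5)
Both directions rest on the Cayley--Hamilton principle for compressions to finite-dimensional subspaces, together with a Krylov-space argument for the converse. Throughout, $\varphi$ is read as a formal power series; it has positive radius of convergence, since $|\langle T^n\xi,\eta\rangle|\le\|T\|^n\|\xi\|\|\eta\|$. \emph{Rational} means that $\varphi=P/Q$ with $P,Q$ polynomials and $Q(0)\neq0$. Equivalently, the sequence $a_n:=\langle T^n\xi,\eta\rangle$ satisfies a linear recurrence with constant coefficients from some index on: there exist $N$ and $q_0,\dots,q_d$, with $q_d\neq0$, such that $\sum_{j=0}^d q_j a_{n+j}=0$ for all $n\ge N$.

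\emph{Necessity.} Suppose $T$ is intransitive, and let $\mathcal N$ be a nontrivial invariant subspace. Pick nonzero $\xi\in\mathcal N$ and nonzero $\eta\in\mathcal N^\perp$. Then $T^n\xi\in\mathcal N$ for all $n$, so every moment $\langle T^n\xi,\eta\rangle$ vanishes. Hence $\varphi\equiv0$, which is rational. This direction is trivial.

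\emph{Sufficiency.} Suppose $\varphi$ is rational for some nonzero $\xi,\eta$, and assume toward a contradiction that $T$ is transitive.

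First, every nonzero vector is cyclic for $T$ and for $T^*$. Indeed, $\bigvee\{T^n\xi\}$ is a nonzero invariant subspace, so it equals $\mathcal H$. Also $T$ has no eigenvalues, since an eigenvector would span an invariant line; the space is infinite-dimensional, so that line is a nontrivial invariant subspace.

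Next, use the recurrence. Put $Q(z)=\sum_j q_j z^j$, so that $\langle T^{n}Q(T)\xi,\eta\rangle=0$ for $n\ge N$. Hence the vector $\zeta:=T^NQ(T)\xi$ satisfies $\langle T^n\zeta,\eta\rangle=0$ for all $n\ge0$. Thus $\eta\perp\bigvee\{T^n\zeta\}$.

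Since $\eta\ne0$, this subspace is proper, and it is invariant under $T$. Transitivity therefore forces it to be $\{0\}$, i.e.\ $\zeta=0$. So $T^NQ(T)\xi=0$ with $T^NQ(T)$ a nonzero polynomial in $T$ (as $q_d\neq0$).

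Factor $z^NQ(z)=c\prod_i(z-w_i)$ with $c\neq0$. Apply the factors to $\xi$ one at a time. Let $k$ be the first stage at which the partial product applied to $\xi$ becomes $0$, and let $v$ be the vector obtained at the previous stage; if $k=1$, take $v=\xi$. Then $v\neq0$ and $(T-w_k)v=0$, so $w_k$ is an eigenvalue of $T$. This contradicts the first step. Hence $T$ is intransitive.

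The only step needing care is the passage from rationality to the exact recurrence $\sum_j q_j a_{n+j}=0$ for all large $n$. For this, multiply $\varphi Q=P$ and compare coefficients of $z^m$ for $m>\deg P$: this gives $\sum_{j}Q_j a_{m-j}=0$, where $Q_j$ are the coefficients of $Q$. Then reindex by $q_j:=Q_{d-j}$ with $d=\deg Q$. Note that $q_d=Q_0=Q(0)\neq0$, and $q_0=Q_d\neq0$ is the leading coefficient of $Q$; in particular the polynomial $\sum_j q_j z^j$ is nonzero.

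No use of $r(T)<1$ or of the Hardy space structure is needed, which is why the restrictions of \cite[Theorem 4.4]{FHJZ1} disappear.
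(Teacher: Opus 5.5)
Your proof is correct, and it takes a genuinely different, much more elementary route than the paper.

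The paper reduces to $\mathcal H=H^2$ and first treats the case $r(T)<1$. There it invokes \cite[Theorem C]{FHJZ1}: intransitivity is equivalent to non-injectivity of a Hankel operator whose symbol is built from the moments $\langle T^n\xi,\eta\rangle$. It combines this with \cite[Lemma 4.2]{FHJZ1}, which identifies that non-injectivity with rationality of $\varphi$. Finally it removes the hypothesis $r(T)<1$ by passing to $\alpha T$ and $\varphi(\alpha z)$.

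You bypass all of this.
\begin{itemize}
\item Necessity: you note it is trivial, since $\xi\in\mathcal N$, $\eta\in\mathcal N^\perp$ gives $\varphi\equiv 0$. The paper itself treats $F\equiv0$ as rational in the proof of Theorem \ref{T:H2 perturbation}, so this reading is consistent.
\item Sufficiency: you convert $\varphi=P/Q$ into a linear recurrence. This produces a nonzero polynomial $p$ with $\langle T^n p(T)\xi,\eta\rangle=0$ for all $n$. Then either $\bigvee\{T^n p(T)\xi\}$ is a nonzero proper invariant subspace, or $p(T)\xi=0$ and a linear factor of $p$ yields an eigenvector.
\end{itemize}

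Two small presentational points. When $p$ is a nonzero constant, the case $p(T)\xi=0$ is simply impossible because $\xi\neq0$; your stage-by-stage factorization covers this only implicitly. Also, you reuse the letter $Q$ for the reversed polynomial $\sum_j q_jz^j$, which is confusing.

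What your argument buys:
\begin{itemize}
\item It is self-contained and exhibits the invariant subspace explicitly.
\item It needs no normalization of the spectral radius, no identification with $H^2$, and no Hankel or Beurling theory.
\item It works verbatim on any Banach space of dimension greater than one, with $\eta$ a nonzero functional (using Hahn--Banach for necessity).
\end{itemize}
It also shows that the rationality formulation of the criterion is essentially elementary.

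The paper's route, by contrast, places the statement inside the Rosenblum/Hankel framework of the series. There, rationality of the moment generating function is tied to the kernel structure of a Hankel operator, but the argument depends on the external results of \cite{FHJZ1}.
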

\begin{proof}
Since $\mathcal H$ is unitarily equivalent to $H^2$, and since unitary equivalence preserves intransitivity as well as the values $\langle T^n\xi,\eta\rangle$, we may assume that $\mathcal H=H^2$.

We first establish the theorem under the additional hypothesis $r(T)<1$. In this case, \cite[Theorem C]{FHJZ1} states that $T$ is intransitive if and only if there exist nonzero $\xi,\eta\in H^2$ such that the Hankel operator with symbol $\psi=\sum_{n=0}^\infty\langle T^n\xi,\eta\rangle z^{-n}$ is not injective; by \cite[Lemma 4.2]{FHJZ1}, this non-injectivity is in turn equivalent to the rationality of
\[
\sum_{n=0}^\infty \langle T^n\xi,\eta\rangle z^{n}.
\]
This proves the theorem when $r(T)<1$.

\smallskip

It remains to remove the hypothesis on the spectral radius. Choose $\alpha>0$ with $r(\alpha T)<1$. The operators $T$ and $\alpha T$ have the same invariant subspaces, so $T$ is intransitive if and only if $\alpha T$ is; moreover,
\[
\sum_{n=0}^\infty \langle (\alpha T)^n\xi,\eta\rangle z^n=\sum_{n=0}^\infty \langle T^n\xi,\eta\rangle (\alpha z)^n=\varphi(\alpha z).
\]
Since $\varphi(z)$ is rational if and only if $\varphi(\alpha z)$ is rational, the same nonzero vectors $\xi,\eta$ witness the rationality of the generating function for $T$ and for $\alpha T$. Applying the case already proved to $\alpha T$ completes the proof.
\end{proof}

\begin{thm}\label{T:polynomial moments}
Let $T\in B(\mathcal H)$ and let $\xi,\eta\in\mathcal H$ be nonzero. If
\[
\langle T^n\xi,\eta\rangle=p_1(n)\alpha_1^n+\cdots+p_m(n)\alpha_m^n,\qquad n\ge N_0,
\]
where $p_1,\dots,p_m$ are polynomials, then $T$ is intransitive.
\end{thm}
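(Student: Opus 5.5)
The plan is to reduce the statement directly to Theorem \ref{T:FHJZ1}: it suffices to show that the generating function
\[
\varphi(z)=\sum_{n=0}^\infty \langle T^n\xi,\eta\rangle z^n
\]
is a rational function, since the same nonzero vectors $\xi,\eta$ then witness intransitivity of $T$.

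First I split $\varphi$ into a polynomial part and a tail. The terms with $n<N_0$ contribute the polynomial
\[
P(z):=\sum_{n=0}^{N_0-1}\langle T^n\xi,\eta\rangle z^n,
\]
which is rational. It then remains to show that
\[
\sum_{n\ge N_0}\sum_{j=1}^m p_j(n)\alpha_j^n z^n
\]
is rational, and by linearity it suffices to treat each $j$ separately. Each $p_j$ is a finite linear combination of the monomials $n^k$, so the problem reduces to showing that $\sum_{n\ge N_0} n^k\alpha^n z^n$ is rational for each fixed $k\ge0$ and $\alpha\in\mathbb C$.

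For this I use the operator $\theta=z\frac{d}{dz}$. Writing
\[
\sum_{n\ge N_0} n^k(\alpha z)^n=\theta^k\left(\frac{(\alpha z)^{N_0}}{1-\alpha z}\right),
\]
one sees that the right side is rational because $\theta$ maps rational functions to rational functions. Alternatively, one can note that $n^k$ is a combination of binomial coefficients $\binom{n+i}{i}$, whose generating functions are $(1-\alpha z)^{-i-1}$ up to polynomial shifts.

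The one point requiring care is that $\varphi$ is a priori only a power series, convergent for $|z|<1/r(T)$, and rationality should be understood as equality, near $0$, with a rational function. The identity above holds as formal power series and on the disc $|z|<\min(1/r(T),\min_j 1/|\alpha_j|)$, where it gives agreement with a rational function; this is exactly what Theorem \ref{T:FHJZ1} requires. The degenerate case $\alpha_j=0$ contributes only finitely many terms and is harmless. I expect no real obstacle: the whole content lies in Theorem \ref{T:FHJZ1}.
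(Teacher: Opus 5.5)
Your proposal is correct and follows essentially the same route as the paper. Both reduce, via Theorem \ref{T:FHJZ1}, to the rationality of $\sum_{n\ge N_0} n^k\alpha^n z^n$: the paper writes $n^k$ in terms of falling factorials and uses $z^j\psi^{(j)}(z)$ with $\psi(z)=(\alpha z)^{N_0}/(1-\alpha z)$, while you apply $(z\,\frac{d}{dz})^k$ to the same $\psi$, which is an equivalent computation. Your explicit splitting off of the polynomial $\sum_{n<N_0}\langle T^n\xi,\eta\rangle z^n$ also tidies a point the paper passes over silently.
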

\begin{proof}
By Theorem~\ref{T:FHJZ1}, we only need to show that
\[
\varphi(z)=\sum_{n\geq N_0} (p_1(n)\alpha_1^n+\cdots+p_m(n)\alpha_m^n)z^n
\]
is a rational function. Therefore, it suffices to prove that $\sum_{n\geq N_0} p(n)\alpha^n z^n$ is a rational function, where $p$ is a polynomial. Furthermore, we may assume that $p(n)=n^k$ for some $k\in \mathbb{N}\cup\{0\}$. Note that
\[
\psi(z)=\sum_{n\geq N_0} \alpha^nz^n=\frac{(\alpha z)^{N_0}}{1-\alpha z}
\]
is a rational function. On the other hand, $n^k$ is a linear combination of the falling factorials $n(n-1)\cdots(n-j+1)$, $1\le j\le k$, together with a constant, and
\[
\sum_{n\geq N_0}n(n-1)\cdots(n-j+1)\,\alpha^n z^n=z^j\psi^{(j)}(z),\qquad j\ge0.
\]
Since $\psi$ is rational, so is each $z^j\psi^{(j)}(z)$. Thus $\sum_{n\geq N_0}n^k \alpha^n z^n$ is rational.
\end{proof}

We next consider rank-one perturbations of diagonal operators on the Hardy space, where the generating function can be analyzed more explicitly.

\begin{thm}\label{T:H2 perturbation}
Let $T=D+f\otimes g$, where $D=\sum_{n=0}^\infty \lambda_n z^n\otimes z^n\in B(H^2)$ and $f,g\in H^2$. Let $\bar D:=D-\lambda_0 I$ and $\bar T:=\bar D+f\otimes g$. If $\langle\bar T \mathbf{1},\mathbf{1}\rangle\neq0$, then the following are equivalent:
\begin{enumerate}
\item[(a)] $\displaystyle \sum_{n=0}^\infty \langle \bar T^n \mathbf{1},\mathbf{1}\rangle z^n$ is rational;
\item[(b)] $\displaystyle \sum_{n=0}^\infty \langle \bar D^n f,g\rangle z^n$ is rational.
\end{enumerate}
Moreover, if either (hence both) holds, then $T$ has a nontrivial invariant subspace. If $\langle\bar T\mathbf{1},\mathbf{1}\rangle=0$, then $T$ has a nontrivial invariant subspace as well.
\end{thm}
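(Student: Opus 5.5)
The plan is a Sherman--Morrison type resolvent identity for the rank-one perturbation $\bar T=\bar D+f\otimes g$, combined with the observation that $\bar D$ annihilates $\mathbf 1$. Since $D$ is diagonal with $D\mathbf 1=\lambda_0\mathbf 1$, we get $\bar D\mathbf 1=0$ and $\bar D^*\mathbf 1=0$. Set
\[
c:=\langle \bar T\mathbf 1,\mathbf 1\rangle=\langle \mathbf 1,g\rangle\langle f,\mathbf 1\rangle,\qquad \Phi(z):=\sum_{n\ge0}\langle\bar T^n\mathbf 1,\mathbf 1\rangle z^n,\qquad \Psi(z):=\sum_{n\ge0}\langle\bar D^nf,g\rangle z^n .
\]
Both series converge for $|z|$ small, for instance $|z|<1/(\|\bar T\|+\|\bar D\|+1)$, where everything below is justified.

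\textbf{Step 1: a closed formula for $\Phi$.} For such $z$ put $R(z)=(I-z\bar D)^{-1}$, so $I-z\bar T=(I-z\bar D)-z f\otimes g$. Since $1-z\Psi(z)=1-z\langle R(z)f,g\rangle\neq0$ for small $|z|$, the rank-one inversion formula gives
\[
(I-z\bar T)^{-1}=R(z)+\frac{z\,R(z)f\otimes R(z)^*g}{1-z\Psi(z)} .
\]
Pair this with $\mathbf 1$ on both sides. Because $\bar D\mathbf 1=0=\bar D^*\mathbf 1$, we have $R(z)\mathbf 1=\mathbf 1=R(z)^*\mathbf 1$. Hence $\langle R\mathbf 1,\mathbf 1\rangle=1$, $\langle \mathbf 1,R^*g\rangle=\langle \mathbf 1,g\rangle$ and $\langle Rf,\mathbf 1\rangle=\langle f,\mathbf 1\rangle$. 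This yields
\[
\Phi(z)=1+\frac{c\,z}{1-z\Psi(z)} .
\]
If one prefers to avoid the resolvent, the same identity can be checked coefficientwise by expanding $\bar T^n\mathbf 1$ through the recursion $\bar T^{n+1}\mathbf 1=\bar D\bar T^n\mathbf 1+\langle\bar T^n\mathbf 1,g\rangle f$. Getting this identity right is the main step; everything else is bookkeeping.

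\textbf{Step 2: (a)$\iff$(b), and the consequence.} When $c\neq0$, solving the identity gives $\Psi(z)=\bigl(1-cz/(\Phi(z)-1)\bigr)/z$, and conversely $\Phi$ is expressed rationally in terms of $\Psi$. Hence $\Phi$ is rational exactly when $\Psi$ is. If either holds, Theorem \ref{T:FHJZ1} applied to $\bar T$ with $\xi=\eta=\mathbf 1$ shows that $\bar T$ is intransitive. Since $T=\bar T+\lambda_0 I$ has the same invariant subspaces, $T$ is intransitive.

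\textbf{Step 3: the case $c=0$.} Here $\langle \mathbf 1,g\rangle=0$ or $\langle f,\mathbf 1\rangle=0$.
\begin{itemize}
\item If $\langle \mathbf 1,g\rangle=0$, then $\bar T\mathbf 1=\bar D\mathbf 1=0$. So $\mathbb C\mathbf 1$ is a nontrivial invariant subspace of $\bar T$, hence of $T$.
\item If $\langle f,\mathbf 1\rangle=0$, then $\bar T^*\mathbf 1=\bar D^*\mathbf 1+\langle\mathbf 1,f\rangle g=0$. So $\{\mathbf 1\}^\perp$ is a nontrivial invariant subspace of $\bar T$, hence of $T$.
\end{itemize}
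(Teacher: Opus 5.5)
Your proof is correct, and it reaches the paper's key identity by a different route.

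\textbf{The paper's route.} The paper sets $x_n=\langle\bar T^n\mathbf 1,\mathbf 1\rangle$ and $y_n=\langle\bar D^{n-1}f,g\rangle$. It checks the convolution recursion $x_n=y_1x_{n-1}+\cdots+y_{n-1}x_1$ by hand for $n\le 5$ and asserts the general case by induction. It then reads off $F-x_1z=FG$ for $F=\sum_{n\ge1}x_nz^n$ and $G=\sum_{n\ge1}y_nz^n$. Since $F=\Phi-1$ and $G=z\Psi$, this is exactly your relation $\Phi=1+cz/(1-z\Psi)$.

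\textbf{Your route.} The Sherman--Morrison formula, combined with $R(z)\mathbf 1=R(z)^*\mathbf 1=\mathbf 1$, gives the identity in one stroke. It therefore supplies a complete proof of the step the paper leaves to an unwritten induction. The cost is a little analysis: convergence for small $|z|$, $1-z\Psi(z)\neq0$, and the observation that $\Phi-1=cz+O(z^2)$ has no zeros on a punctured neighbourhood of $0$ when you solve for $\Psi$. The paper's argument is purely at the level of formal power series.

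\textbf{Remaining steps.} These coincide with the paper's. You use rational dependence in both directions when $c\neq0$, Theorem \ref{T:FHJZ1} applied to $\bar T$ with $\xi=\eta=\mathbf 1$, and the kernel of $\bar T$ or $\bar T^*$ when $c=0$. One small difference is in the subcase $\langle f,\mathbf 1\rangle=0$: you take $\{\mathbf 1\}^\perp$, while the paper takes $\overline{\ran\bar T}$. Your choice is always nontrivial. The paper's degenerates to $\{0\}$ when $\bar T=0$, that is when $T=\lambda_0 I$. That case is harmless, but strictly it needs a separate word.
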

\begin{proof}
Let $x_n=\langle \bar{T}^n \mathbf{1},\mathbf{1}\rangle$ for $n\geq 1$. We claim that for $n\geq 2$,
\begin{equation}\label{E:xn}
x_n=\langle f,g\rangle x_{n-1}+\langle \bar{D}f, g\rangle x_{n-2}+\langle \bar{D}^2f, g\rangle x_{n-3}+\cdots+\langle \bar{D}^{n-2}f, g\rangle x_1.
\end{equation}
Indeed,
\[
x_1=\langle \bar{T} \mathbf{1},\mathbf{1}\rangle=\langle f,\mathbf{1}\rangle \langle \mathbf{1},g\rangle,
\]
\[
x_2=\langle \bar{T}^2 \mathbf{1},\mathbf{1}\rangle=\langle f,g\rangle \langle f,\mathbf{1}\rangle \langle \mathbf{1},g\rangle,
\]
\[
x_3=\langle \bar{T}^3 \mathbf{1},\mathbf{1}\rangle=(\langle \bar{D}f,g\rangle+(\langle f,g\rangle)^2) \langle f,\mathbf{1}\rangle \langle \mathbf{1},g\rangle,
\]
\[
x_4=\langle \bar{T}^4 \mathbf{1},\mathbf{1}\rangle=(\langle \bar{D}^2f,g\rangle+2\langle f,g\rangle \langle\bar{D}f,g\rangle+ (\langle f,g\rangle)^3) \langle f,\mathbf{1}\rangle \langle \mathbf{1},g\rangle,
\]
\begin{align*}
x_5=\langle \bar{T}^5 \mathbf{1},\mathbf{1}\rangle=\Bigl(&\langle \bar{D}^3f,g\rangle+2\langle f,g\rangle\langle \bar{D}^2f,g\rangle+3(\langle f,g\rangle)^2\langle\bar{D}f,g\rangle\\
&+\langle\bar{D}f,g\rangle^2+(\langle f,g\rangle)^4\Bigr) \langle f,\mathbf{1}\rangle \langle \mathbf{1},g\rangle.
\end{align*}
The cases $n=2,3,4,5$ are shown above; the general case follows by induction. Put $y_n=\langle \bar{D}^{n-1}f,g\rangle$ for $n\geq 1$. Then
\[
x_n=y_1x_{n-1}+y_2x_{n-2}+\cdots+y_{n-1}x_1,\quad n\geq 2.
\]
Thus
\[
\sum_{n=2}^\infty \langle \bar{T}^n \mathbf{1},\mathbf{1}\rangle z^n=\sum_{n=2}^\infty x_nz^n=\sum_{n=2}^\infty \left( \sum_{j=1}^{n-1} y_jx_{n-j}\right)z^n
\]
\[
=\sum_{n=2}^\infty \left( \sum_{i+j=n}y_jx_{i}\right)z^{i+j}=\sum_{i=1}^\infty x_iz^i \sum_{j=1}^\infty y_jz^j.
\]
Define $F(z)=\sum_{n=1}^\infty x_nz^n$ and $G(z)=\sum_{n=1}^\infty y_nz^n$. Then
\[
F(z)-x_1z=F(z)G(z).
\]
\smallskip

If $x_1\neq 0$, then
\[
F(z)=\frac{x_1z}{1-G(z)},\qquad G(z)=\frac{F(z)-x_1z}{F(z)}.
\]
It follows that $F$ is rational if and only if $G$ is, that is, (a)$\Leftrightarrow$(b). By Theorem~\ref{T:FHJZ1}, if (a) or (b) holds, then $\bar T=T-\lambda_0\mathbf{1}$ has a nontrivial invariant subspace, and hence so does $T$.

If $x_1=0$, then $F=FG$; since $G(0)=0$, the series $1-G$ is invertible, so $F\equiv0$ and (a) holds automatically, while (b) need not. In this case $x_1=\langle f,\mathbf{1}\rangle\langle \mathbf{1},g\rangle=0$, so either $\langle \mathbf{1},g\rangle=0$, in which case $\bar T\mathbf{1}=0$ and $\mathbb C\mathbf{1}$ is a nontrivial invariant subspace of $\bar T$, or $\langle f,\mathbf{1}\rangle=0$, in which case $\bar T^*\mathbf{1}=0$ and $\overline{\ran\bar T}$ is one. In both cases $\bar T$, and hence $T$, has a nontrivial invariant subspace.
\end{proof}

An immediate consequence of Theorem \ref{T:H2 perturbation} is the following.

\begin{cor}
Let $T=D+f\otimes g$ with $D=\sum_{n=0}^\infty \lambda_n z^n\otimes z^n\in B(H^2)$ and $f,g\in H^2$, and let $\bar D=D-\lambda_0 I$. If there exists $N\ge 1$ such that $\langle \bar D^n f,g\rangle=0$ for all $n\ge N$, then $T$ has a nontrivial invariant subspace.
\end{cor}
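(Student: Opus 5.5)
The plan is to reduce directly to Theorem \ref{T:H2 perturbation}, case (b). Suppose $\langle \bar D^n f,g\rangle=0$ for all $n\ge N$, and set $y_n=\langle \bar D^{n-1}f,g\rangle$ as in that proof. Then
\[
G(z)=\sum_{n=1}^\infty y_nz^n=\sum_{n=1}^{N} y_nz^n
\]
is a polynomial. In particular it is rational, so condition (b) of Theorem \ref{T:H2 perturbation} holds, because $\sum_{n\ge0}\langle\bar D^nf,g\rangle z^n=G(z)/z$ is rational.

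The next step is to split into the two cases of Theorem \ref{T:H2 perturbation}.

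\emph{Case $\langle \bar T\mathbf 1,\mathbf 1\rangle\neq0$.} Here (b)$\Rightarrow$(a) gives that $F(z)=x_1z/(1-G(z))$ is rational. Theorem \ref{T:FHJZ1}, applied to $\bar T$ with $\xi=\eta=\mathbf 1$, then shows that $\bar T$ is intransitive. Since $T=\bar T+\lambda_0 I$, the operators $T$ and $\bar T$ share invariant subspaces, so $T$ is intransitive.

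\emph{Case $\langle\bar T\mathbf 1,\mathbf 1\rangle=0$.} The last assertion of Theorem \ref{T:H2 perturbation} already gives a nontrivial invariant subspace, with no further argument needed.

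I expect no real obstacle, since everything is contained in Theorem \ref{T:H2 perturbation}. The only point to check is the bookkeeping that the hypothesis $n\ge N$ with $N\ge1$ makes $G$ a polynomial of degree at most $N$. That holds because $y_n=0$ for $n-1\ge N$.
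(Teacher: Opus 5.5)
Your proposal is correct and follows the paper's proof: the generating function in (b) of Theorem \ref{T:H2 perturbation} is a polynomial, hence rational, and that theorem, including its separate clause for the case $\langle\bar T\mathbf 1,\mathbf 1\rangle=0$, gives the nontrivial invariant subspace. Your explicit case split only spells out what the paper's one-line citation of that theorem leaves implicit.
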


\begin{proof}
Under the hypothesis, the generating function in condition (b) of Theorem \ref{T:H2 perturbation} is a polynomial, hence rational; the conclusion therefore follows from that theorem.
\end{proof}

We conclude this section with a finite-rank analogue of these results.

\begin{thm}\label{T:finite rank perturbations}
Let $N\in\mathbb N$, let $f_i,g_i\in H^2$ for $1\le i\le N$, let $\{e_n\}_{n=0}^{\infty}$ be the standard orthonormal basis of $H^2$, and let
\[
T=D+\sum_{i=1}^N f_i\otimes g_i\in B(H^2),\qquad D:=\sum_{n=1}^\infty \lambda_n e_n\otimes e_n.
\]
If $\langle D^n f_i,g_j\rangle=0$ for all $n\ge 1$ and all $i,j$, and if $\langle f_i,g_j\rangle=0$ for $i\neq j$, then $T$ is intransitive.
\end{thm}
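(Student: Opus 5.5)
The plan is to exhibit a nontrivial invariant subspace of $T$ directly, without passing through the generating function criterion of Theorem \ref{T:FHJZ1}. Write $F:=\sum_{i=1}^N f_i\otimes g_i$, so that $T=D+F$. Set
\[
\mathcal M:=\bigvee\{D^nf_i:\ n\ge 1,\ 1\le i\le N\}.
\]
Clearly $D\mathcal M\subset\mathcal M$. For every $n\ge1$ and every $i$ we have
\[
F(D^nf_i)=\sum_j\langle D^nf_i,g_j\rangle f_j=0
\]
by the hypothesis. By linearity and continuity, $F\mathcal M=\{0\}$. Hence $T$ agrees with $D$ on $\mathcal M$, and $T\mathcal M=D\mathcal M\subset\mathcal M$, so $\mathcal M\in\operatorname{Lat}T$. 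It remains to treat the cases in which $\mathcal M$ is trivial.

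\emph{$\mathcal M=\mathcal H$.} The hypothesis says precisely that every $g_j$ is orthogonal to $\mathcal M$. If $\mathcal M=H^2$, this forces $g_j=0$ for all $j$. Then $F=0$ and $T=D$. Since $De_0=0$, the line $\mathbb C e_0$ is a nontrivial invariant subspace. Conversely, as soon as some $g_j\neq0$, we get $\mathcal M\subset g_j^{\perp}\neq H^2$, so $\mathcal M$ is proper.

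\emph{$\mathcal M=\{0\}$.} Then $Df_i=0$ for every $i$, so $\ran F\subset\ker D$. Consequently
\[
T(\ker D)=F(\ker D)\subset\ker D,
\]
and $\ker D$ is invariant under $T$. It is nonzero because $e_0\in\ker D$. If $\ker D\neq H^2$, we are done. If $\ker D=H^2$, then $D=0$ and $T=F$ has finite rank. Then either $\ker T$ or $\overline{\ran T}$ is a nontrivial invariant subspace, unless $T=0$, in which case any line is invariant.

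Combining these cases, $T$ is intransitive. The main point to check carefully is the bookkeeping of the degenerate cases, especially that $\ker D\ni e_0$ always supplies a nonzero vector. The step I expect to require a second look is whether the argument really does not need the hypothesis $\langle f_i,g_j\rangle=0$ for $i\neq j$. That hypothesis would be essential if one instead computed $\sum_n\langle T^n\xi,\eta\rangle z^n$ and showed that it is rational, as in Theorem \ref{T:H2 perturbation}. If the direct argument above turned out to have a gap, I would fall back on that route: take $\xi=\eta=e_0$, derive a convolution recursion for the moments analogous to \eqref{E:xn}, now with an $N\times N$ matrix of coefficients $\langle D^nf_i,g_j\rangle$ that is diagonal at $n=0$ and vanishes for $n\ge1$, and conclude that the generating function is rational.
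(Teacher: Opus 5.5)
Your proof is correct, and it takes a genuinely different and more elementary route than the paper. The paper computes moments. It uses $D\mathbf 1=D^*\mathbf 1=0$ together with both orthogonality hypotheses, and proves by induction that $\langle T^n\mathbf 1,\mathbf 1\rangle=\sum_i\langle f_i,g_i\rangle^{n-1}\langle f_i,\mathbf 1\rangle\langle\mathbf 1,g_i\rangle$. The generating function is therefore a finite sum of geometric series, hence rational. Intransitivity then follows from Theorem \ref{T:FHJZ1}, which rests on the Hankel-operator results of the first paper in the series.

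You instead exhibit an explicit invariant subspace, $\bigvee\{D^nf_i:n\ge1\}$, on which $F$ vanishes and $T$ coincides with $D$. You then settle the two degenerate cases by hand:
\begin{itemize}
\item if all $g_j=0$, then $T=D$ and $\mathbb C e_0$ is invariant;
\item if all $Df_i=0$, you use $\ker D$ or, when $D=0$, the fact that $T=F$ has finite rank.
\end{itemize}
All of these steps check out.

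What your route buys is a self-contained argument with a concrete invariant subspace. It also shows that the hypothesis $\langle f_i,g_j\rangle=0$ for $i\neq j$ is superfluous for the conclusion. Moreover, the diagonal structure of $D$ enters only through $De_0=0$, and only in the degenerate cases.

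What the paper's route buys is an illustration of the rationality criterion, which is the purpose of that section. As you correctly anticipated in your fallback, the cross-orthogonality hypothesis is exactly what makes the moment recursion close up. Only the words $F^n$ survive in $\langle T^n\mathbf 1,\mathbf 1\rangle$, and each $F^n$ reduces to a diagonal sum $\sum_i\langle f_i,g_i\rangle^{n-1}f_i\otimes g_i$.
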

\begin{proof}
Note that $D\mathbf{1}=0$ and $D^*\mathbf{1}=0$, so
\[
\langle T\mathbf{1},\mathbf{1}\rangle=\sum_{i=1}^N \langle f_i,\mathbf{1}\rangle \langle \mathbf{1},g_i\rangle.
\]
\[
T^2=D^2+\sum_{i=1}^N Df_i\otimes g_i+\sum_{i=1}^N f_i\otimes D^*g_i+\sum_{i=1}^N \langle f_i,g_i\rangle f_i\otimes g_i.
\]
\[
\langle T^2\mathbf{1},\mathbf{1}\rangle=\sum_{i=1}^N \langle f_i,g_i\rangle \langle f_i,\mathbf{1}\rangle \langle \mathbf{1},g_i\rangle.
\]
\begin{align*}
T^3=D^3&+\sum_{i=1}^N D^2f_i\otimes g_i+\sum_{i=1}^N Df_i\otimes D^*g_i+\sum_{i=1}^N\langle f_i,g_i\rangle Df_i\otimes g_i\\
&+\sum_{i=1}^N f_i\otimes D^{*2}g_i+\sum_{i=1}^N \langle f_i,g_i\rangle f_i\otimes D^*g_i+\sum_{i=1}^N \langle f_i,g_i\rangle^2 f_i\otimes g_i.
\end{align*}
\[
\langle T^3\mathbf{1},\mathbf{1}\rangle=\sum_{i=1}^N \langle f_i,g_i\rangle^2 \langle f_i,\mathbf{1}\rangle \langle \mathbf{1},g_i\rangle.
\]
A straightforward induction gives
\[
\langle T^n\mathbf{1},\mathbf{1}\rangle=\sum_{i=1}^N \langle f_i,g_i\rangle^{n-1} \langle f_i,\mathbf{1}\rangle \langle \mathbf{1},g_i\rangle.
\]
Evidently, $\sum_{n=0}^\infty \langle T^n\mathbf{1},\mathbf{1}\rangle z^n$ is a rational function.
Thus by Theorem \ref{T:FHJZ1}, $T$ is intransitive.
\end{proof}

\section{On Halmos's third problem}\label{S:Halmos}

Halmos's third problem \cite{Hal70} is one of the ten open problems posed by Halmos in the theory of operators on Hilbert space: is the inverse of an invertible intransitive operator necessarily also intransitive? Equivalently, is the property of having a nontrivial closed invariant subspace preserved under taking inverses? This is natural to ask because $T$ and $T^{-1}$ commute and share many spectral properties, although their invariant subspace lattices need not coincide.

The problem is closely related to the Invariant Subspace Problem, and we refer to the survey \cite{LomShu18} for a comprehensive account. While the other nine of Halmos's ten problems have been settled, the third has remained open for over fifty years; partial positive results under additional assumptions are due to Douglas, Foias and Pearcy \cite{DFP05} and to Jiang, Fang and Ji \cite{JFJ25}, and the companion problem, whether the intransitivity of $T^{2}$ implies that of $T$, was partially answered in the first paper of this series \cite{FHJZ1}. In this section we reduce Halmos's third problem to a special class of operators, namely those of \emph{conjugate bi-geometric form}, whose definition rests on the Cowen-Douglas class.

Recall that, for a connected open set $\Omega\subset\mathbb C$ and a positive integer $n$, an operator $T\in B(\H)$ is said to belong to the Cowen-Douglas class $B_n(\Omega)$ if $\Omega\subset\sigma(T)$; $\ran(T-z)=\H$ for every $z\in\Omega$; $\bigvee_{z\in\Omega}\ker(T-z)=\H$; and $\dim\ker(T-z)=n$ for every $z\in\Omega$. The theory of these operators connects the similarity classification of operators with the complex geometry of holomorphic vector bundles (see \cite{CD} and the book \cite{JW2}), and it is the basis of the reduction established below.

\begin{defn}
	Let $T\in B(\H)$. We say that $T$ has a conjugate bi-geometric form if there is a nontrivial subspace $\mathfrak{M}$ such that
	\[
	T=\begin{matrix}\begin{bmatrix}T_1& T_{12}\\ 0& T_2 \\ \end{bmatrix}&\begin{matrix}
		\mathfrak{M}\\
		\ \ \mathfrak{M}^{\perp}
		\end{matrix}
\end{matrix} \,
	\]
	 where $T_1^*,T_2\in B_1(O_\delta)$ for some $\delta>0$, with $O_\delta:=\{z\in\mathbb{C}:|z|<\delta\}$.
\end{defn}

We shall show that, for Halmos's third problem, it suffices to consider invertible operators with conjugate bi-geometric form (Theorem \ref{C}).

\begin{lem}\label{L:the cap of T^nH}
Let $T\in B(\H)$ and $\mathcal{R}=\bigcap\limits_{n=1}^{\infty}T^n(\H)$. If $T$ is injective, then $T(\mathcal{R})=\mathcal{R}$.
\end{lem}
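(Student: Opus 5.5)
The plan is to prove the two inclusions separately, using only the nesting of the ranges $T^n(\H)$ and the injectivity of $T$. The inclusion $T(\mathcal R)\subseteq\mathcal R$ needs no hypothesis. The reverse inclusion $\mathcal R\subseteq T(\mathcal R)$ is where injectivity enters, and I expect that to be the only real step.

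\emph{The easy inclusion.} First note that $T^{n+1}(\H)=T^n(T\H)\subseteq T^n(\H)$, so the ranges are decreasing. If $x\in\mathcal R$, then for each $n\ge1$ we can write $x=T^n y_n$. Hence $Tx=T^{n+1}y_n\in T^{n+1}(\H)\subseteq T^n(\H)$ for every $n$, and therefore $Tx\in\mathcal R$.

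\emph{The reverse inclusion.} Take $x\in\mathcal R$; we must find $w\in\mathcal R$ with $Tw=x$.
\begin{itemize}
\item Since $x\in T(\H)$, there is $w$ with $Tw=x$, and by injectivity this $w$ is unique.
\item Fix $n\ge1$. Since $x\in T^{n+1}(\H)$, write $x=T^{n+1}u_n$. Then $T(T^n u_n)=x=Tw$.
\item Injectivity of $T$ forces $w=T^n u_n$, so $w\in T^n(\H)$.
\end{itemize}
Since $n\ge1$ was arbitrary, $w\in\bigcap_{n\ge1}T^n(\H)=\mathcal R$, and $x=Tw\in T(\mathcal R)$. Combining the two inclusions gives $T(\mathcal R)=\mathcal R$.

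The only point needing care is this reverse inclusion. Without injectivity, the preimages $T^n u_n$ obtained for different $n$ could be different vectors, and none of them need lie in every range. Injectivity collapses all of them to the single vector $w$, which is exactly what places $w$ in $\mathcal R$. No closedness of $\mathcal R$ is used or needed; $\mathcal R$ is just a linear manifold.
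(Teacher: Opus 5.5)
Your proposal is correct and follows essentially the same argument as the paper: you take the unique preimage $w$ of $x$ and use injectivity to identify it with $T^n u_n$ for every $n$, so $w \in \mathcal R$. You also spell out the inclusion $T(\mathcal R)\subseteq\mathcal R$, which the paper simply calls evident.
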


\begin{proof}
It is evident that $T(\mathcal{R})\subseteq \mathcal{R}$. Let $y\in \mathcal{R}$. Since $y\in T(\H)$, there is $x\in \H$ such that $Tx=y$. For any $n\in \mathbb{N}$, it follows from $y\in T^{n+1}(\H)$ that there is $x_n\in \H$ such that $T^{n+1}x_n=y$. Since $T$ is injective, it follows from $T(T^{n}x_n)=y$ that $T^nx_n=x$.
Thus $x\in\bigcap\limits_{n=1}^{\infty}T^n(\H)=\mathcal{R}$. This completes the proof.
\end{proof}

\begin{lem}\label{L:CD operator}
Let $k\in\mathbb{N}$ and let $T\in B(\mathcal{H})$ be an injective operator with closed range.
Suppose that $\dim\ran(T)^\perp = k$ and
$\bigcap_{n=1}^\infty\ran(T^n) = \{0\}$.
Then $T^*$ is a Cowen-Douglas operator of index $k$ on some open disc.
\end{lem}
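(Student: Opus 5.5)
The plan is to verify the four defining conditions of $B_k(O_\delta)$ for $T^*$, with $\delta$ chosen via the lower bound of $T$. Since $T$ is injective with closed range, it is bounded below: there is $c>0$ with $\|Tx\|\ge c\|x\|$. Put $\delta:=c$ (or any positive number not exceeding the reduced minimum modulus). For $|z|<\delta$ we have $\|(T-z)x\|\ge (c-|z|)\|x\|$, so $T-z$ is injective with closed range. Moreover, $T$ is Fredholm with $\operatorname{ind}T=-k$, because $\dim\ker T=0$ and $\dim\ran(T)^\perp=k$. By stability of the index on the connected set $O_\delta$ (the family $T-z$ stays semi-Fredholm, indeed Fredholm), $\operatorname{ind}(T-z)=-k$. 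Hence $\dim\ran(T-z)^\perp=k$, i.e. $\dim\ker(T^*-\bar z)=k$, and $\ran(T^*-\bar z)=\ker(T-z)^\perp=\H$ since $T-z$ is injective with closed range. As $z$ ranges over $O_\delta$, so does $\bar z$. Therefore $T^*-w$ is surjective with $k$-dimensional kernel for every $w\in O_\delta$. In particular $O_\delta\subset\sigma(T^*)$, since $T^*-w$ is not injective.

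The remaining and main step is the spanning condition $\bigvee_{w\in O_\delta}\ker(T^*-w)=\H$. Suppose $x\perp\ker(T^*-w)$ for all $w\in O_\delta$. Then $x\in\ker(T^*-w)^\perp=\ran(T-\bar w)$ (closed range) for every $|w|<\delta$, so $x\in\ran(T-z)$ for all $z\in O_\delta$. I want to conclude $x\in\bigcap_n\ran(T^n)$, which is $\{0\}$ by hypothesis.

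To do this I will use analyticity. Let $L$ be the left inverse $L=(T^*T)^{-1}T^*$, which satisfies $LT=I$. For $|z|$ small, $T-z=T(I-zL)$ with $I-zL$ invertible, so $(T-z)$ has the analytic left inverse $L_z=(I-zL)^{-1}L$. Since $x\in\ran(T-z)$, the function $u(z):=L_zx$ is analytic and satisfies $(T-z)u(z)=x$ near $0$. Expand $u(z)=\sum_n u_nz^n$. Comparing coefficients gives $Tu_0=x$ and $Tu_{n}=u_{n-1}$. Hence $x=T^{n+1}u_n\in\ran T^{n+1}$ for every $n$, and so $x\in\bigcap_n\ran(T^n)=\{0\}$. (Lemma \ref{L:the cap of T^nH} is in the same spirit but not needed.)

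The obstacle is exactly this identity $(T-z)L_zx=x$, which needs $x\in\ran(T-z)$. That holds because $(T-z)L_z$ is the idempotent onto $\ran(T-z)$ along a complement: indeed $L_z(T-z)=I$. The only care needed is to take $\delta$ small enough that $\|zL\|<1$, so that $L_z$ is defined and analytic; it suffices to take $\delta\le\min(c,\|L\|^{-1})$. With that choice of disc, all conditions of $B_k(O_\delta)$ hold for $T^*$.
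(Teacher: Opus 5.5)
Your argument is correct, but it differs from the paper's in both halves. For the pointwise conditions, the paper uses the single factorization $T-\lambda=(I-\lambda S)T$ with $ST=I$. For $|\lambda|<1/\|S\|$ the invertible factor $I-\lambda S$ maps the closed, codimension-$k$ subspace $\ran T$ onto $\ran(T-\lambda)$. This gives injectivity, closed range and $\dim\ran(T-\lambda)^\perp=k$ all at once, with no Fredholm theory. You instead use the lower bound together with constancy of the semi-Fredholm index on the disc, which is valid but heavier.

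Your displayed identity $T-z=T(I-zL)$ is false, since $TL$ is the orthogonal projection onto $\ran T$, not $I$. The correct factorization is $T-z=(I-zL)T$, which is exactly the paper's and would let you drop the index argument. The slip does no damage downstream: $L(T-z)=I-zL$, so $L_z=(I-zL)^{-1}L$ is still a left inverse of $T-z$.

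For the spanning condition, the paper invokes \cite[Proposition 1.41]{JW1} to reduce it to $\bigvee_n\ker(T^*)^n=\H$, which is just the orthogonal complement of the hypothesis $\bigcap_n\ran(T^n)=\{0\}$. You prove the needed implication directly. Expanding $L_zx=\sum_n z^nL^{n+1}x$ and comparing coefficients in $(T-z)L_zx=x$ gives $x=T^{n+1}L^{n+1}x$ for every $n$. Hence any vector orthogonal to all $\ker(T^*-w)$ lies in $\bigcap_n\ran(T^n)$. This makes your proof self-contained at the cost of a few lines; in effect you reprove, in this setting, the direction of the cited result that the lemma actually needs.
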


\begin{proof}
Since $T$ is injective with closed range, $T$ is left invertible; choose $S\in B(\mathcal{H})$ with $ST=I$, for instance $S=(T^*T)^{-1}T^*$ on $\ran T$ and $S=0$ on $\ran(T)^\perp$. Then
\[
T - \lambda I = (I - \lambda S)T,\qquad \lambda\in\mathbb{C}.
\]
Let $\Omega = \{\lambda\in\mathbb{C} : |\lambda| < 1/\|S\|\}$. For $\lambda\in\Omega$ the operator $I - \lambda S$ is invertible, so $T-\lambda I$ is injective, its range $(I - \lambda S)\ran(T)$ is closed, and
\[
\dim\ran(T - \lambda I)^\perp = \dim\ran(T)^\perp = k.
\]

Set $\Omega^* = \{\overline{\lambda} : \lambda\in\Omega\}$. For $w\in\Omega^*$, the operator $T^* - w$, being the adjoint of $T-\overline{w}$, is surjective and
\[
\dim\ker(T^* - w) = \dim\ran(T - \overline{w})^\perp = k.
\]
It remains to show that $\bigvee_{w\in\Omega^*}\ker(T^* - w)=\mathcal{H}$. By \cite[Proposition 1.41]{JW1}, it suffices to show that $\bigvee_{n\in \mathbb{N}}\ker (T^*)^n=\mathcal{H}$; but this follows at once from the hypothesis, since
\[\bigvee_{n\in \mathbb{N}}\ker (T^*)^n=(\bigcap_{n=1}^\infty\ran(T^n))^\perp=\{0\}^\perp=\mathcal{H}.\]
Hence $T^*$ satisfies all the requirements of the definition, that is, $T^*$ is a Cowen-Douglas operator of index $k$ on the disc $\Omega$.
\end{proof}

\begin{lem}\label{lrinv}
	Let $T\in B(\H)$ be an invertible operator and $\mathfrak{M}$ be a nonzero invariant subspace of $T$, with $P_{\mathfrak{M}}$ the orthogonal projection onto $\mathfrak{M}$. Write $T_1=T|_{\mathfrak{M}}=P_{\mathfrak{M}}TP_{\mathfrak{M}}$ and $T_2=P_{\mathfrak{M}^\perp}TP_{\mathfrak{M}^\perp}$. Then $T_1$ is left invertible and $T_2$ is right invertible.
\end{lem}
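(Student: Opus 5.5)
The plan is to use the block form of $T$ with respect to $\H=\mathfrak{M}\oplus\mathfrak{M}^\perp$,
\[
T=\begin{bmatrix}T_1&T_{12}\\0&T_2\end{bmatrix},
\]
and simply read off the relevant entries of $TT^{-1}=T^{-1}T=I$. Set $P=P_{\mathfrak{M}}$ and $Q=I-P=P_{\mathfrak{M}^\perp}$. Invariance of $\mathfrak{M}$ gives $TP=PTP$, equivalently $QTP=0$. We then write $T_1=PTP|_{\mathfrak M}$ and $T_2=QTQ|_{\mathfrak M^\perp}$.

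\textbf{Left invertibility of $T_1$.} For $x\in\mathfrak M$ we have $Tx=T_1x$. Hence
\[
\|x\|\le \|T^{-1}\|\,\|Tx\|=\|T^{-1}\|\,\|T_1x\|,
\]
so $T_1$ is bounded below on $\mathfrak M$. Equivalently, the operator $S_1:=PT^{-1}|_{\mathfrak M}$ satisfies
\[
S_1T_1x=PT^{-1}Tx=Px=x,
\]
so $S_1$ is an explicit left inverse of $T_1$.

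\textbf{Right invertibility of $T_2$.} Taking adjoints, $\mathfrak M^\perp$ is invariant under $T^*$, and $T^*|_{\mathfrak M^\perp}=(QTQ)^*|_{\mathfrak M^\perp}=T_2^*$. Applying the previous paragraph to the invertible operator $T^*$ and its invariant subspace $\mathfrak M^\perp$ shows that $T_2^*$ is left invertible. Taking adjoints again, $T_2$ is right invertible.

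One can also check this directly. Put $R:=QT^{-1}|_{\mathfrak M^\perp}$. Since $QTP=0$, we get $QT=QTQ$, and therefore
\[
T_2Ry=QTQT^{-1}y=QTT^{-1}y=Qy=y\qquad (y\in\mathfrak M^\perp).
\]

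There is no real obstacle here. The only point needing care is the degenerate case $\mathfrak M=\H$, where $T_2$ acts on the zero space and the statement is vacuous, so the argument above still covers it.
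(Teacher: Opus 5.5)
Your proposal is correct and is essentially the paper's argument. Your explicit one-sided inverses $S_1=P_{\mathfrak M}T^{-1}|_{\mathfrak M}$ and $R=P_{\mathfrak M^\perp}T^{-1}|_{\mathfrak M^\perp}$ are exactly the diagonal entries $A_1,A_2$ of the block matrix of $T^{-1}$, and the paper obtains $A_1T_1=I$ and $T_2A_2=I$ by reading off the $(1,1)$ entry of $T^{-1}T=I$ and the $(2,2)$ entry of $TT^{-1}=I$; your bounded-below and adjoint variants are valid but not needed.
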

\begin{proof}
With respect to the decomposition $\H=\mathfrak{M}\oplus\mathfrak{M}^{\perp}$, write
	\[
T=\begin{matrix}\begin{bmatrix}T_1& T_{12}\\ 0& T_2 \\ \end{bmatrix}&\begin{matrix}
		\mathfrak{M}\\
		\ \ \mathfrak{M}^{\perp}
	\end{matrix}
\end{matrix} \ \ ,
\]
and
	\[
T^{-1}=\begin{matrix}\begin{bmatrix}A_1& A_{12}\\ A_{21}& A_2 \\ \end{bmatrix}&\begin{matrix}
		\mathfrak{M}\\
		\ \ \mathfrak{M}^{\perp}
	\end{matrix}
\end{matrix} \ \ .
\]
Since
\begin{align*}
&T^{-1}T=\begin{bmatrix}A_1& A_{12}\\ A_{21}& A_2 \\ \end{bmatrix}\begin{bmatrix}T_1& T_{12}\\ 0& T_2 \\ \end{bmatrix}=\begin{bmatrix}A_1T_1& A_1T_{12}+A_{12}T_2\\ A_{21}T_1& A_{21}T_{12}+A_2T_2 \\ \end{bmatrix}=\begin{bmatrix}I& 0\\ 0& I \\ \end{bmatrix}, \\
&TT^{-1}=\begin{bmatrix}T_1& T_{12}\\ 0& T_2 \\ \end{bmatrix}\begin{bmatrix}A_1& A_{12}\\ A_{21}& A_2 \\ \end{bmatrix}=\begin{bmatrix}T_1A_1+T_{12}A_{21}& T_1A_{12}+T_{12}A_2\\ T_2A_{21}& T_2A_2 \\ \end{bmatrix}=\begin{bmatrix}I& 0\\ 0& I \\ \end{bmatrix},
\end{align*}
we obtain $A_1T_1=I$ and $T_2A_2=I$; hence $T_1$ is left invertible and $T_2$ is right invertible.
\end{proof}

\begin{lem}\label{intersection}
	Let $T\in B(\H)$ be an invertible operator, let $\mathfrak{M}$ be a nontrivial invariant subspace of $T$, and let $P_{\mathfrak{M}}$ be the orthogonal projection onto $\mathfrak{M}$. If $T|_{\mathfrak{M}}=P_{\mathfrak{M}}TP_{\mathfrak{M}}$ is invertible, then $T^{-1}$ is intransitive. The same conclusion holds if $\bigcap_{n\geq 1}(\ran T|_{\mathfrak{M}}^n)\neq \{0\}$.
\end{lem}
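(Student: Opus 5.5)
Write $T=\begin{bmatrix}T_1&T_{12}\\0&T_2\end{bmatrix}$ with respect to $\H=\mathfrak M\oplus\mathfrak M^\perp$, and write $T^{-1}=\begin{bmatrix}A_1&A_{12}\\A_{21}&A_2\end{bmatrix}$ as in the proof of Lemma \ref{lrinv}. For the first assertion, suppose $T_1$ is invertible on $\mathfrak M$. Then $T\mathfrak M=T_1\mathfrak M=\mathfrak M$. Applying $T^{-1}$ gives $\mathfrak M=T^{-1}\mathfrak M$, so $\mathfrak M$ is invariant under $T^{-1}$. Since $\mathfrak M$ is nontrivial, $T^{-1}$ is intransitive. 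Equivalently, from $A_{21}T_1=0$ (proof of Lemma \ref{lrinv}) and the surjectivity of $T_1$ we get $A_{21}=0$, so $T^{-1}$ is upper triangular.

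For the second assertion, set $\mathcal R=\bigcap_{n\ge1}\ran(T_1^n)\subseteq\mathfrak M$ and assume $\mathcal R\neq\{0\}$. Since $T$ is invertible, $T_1=T|_{\mathfrak M}$ is injective. Lemma \ref{L:the cap of T^nH}, applied to $T_1$ on $\mathfrak M$, then gives $T_1\mathcal R=\mathcal R$, so $T\mathcal R=\mathcal R$ and therefore $T^{-1}\mathcal R=\mathcal R$. Hence the linear manifold $\mathcal R$ is invariant under $T^{-1}$, and so is its closure $\overline{\mathcal R}$, by continuity of $T^{-1}$. We have $\overline{\mathcal R}\neq\{0\}$ by assumption and $\overline{\mathcal R}\subseteq\mathfrak M\subsetneq\H$. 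Thus $\overline{\mathcal R}$ is a nontrivial invariant subspace of $T^{-1}$.

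The only delicate point is that $\mathcal R$ need not be closed. We therefore pass to the closure only after establishing the set equality $T^{-1}\mathcal R=\mathcal R$ at the level of the linear manifold. Properness of the closure is automatic because the closure stays inside $\mathfrak M$. No other obstacle is expected.
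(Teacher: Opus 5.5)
Your proposal is correct and follows the paper's proof. In the first case you use $T\mathfrak M=\mathfrak M$, and in the second you use Lemma \ref{L:the cap of T^nH} to get $T\mathcal R=\mathcal R$ and hence $T^{-1}\mathcal R=\mathcal R$. The only difference is that the paper notes $\mathcal R$ is already closed: each $T^n(\mathfrak M)$ is closed because $T|_{\mathfrak M}$ is bounded below by Lemma \ref{lrinv}. So your passage to $\overline{\mathcal R}$ is valid but not actually needed.
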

\begin{proof}
	By Lemma \ref{lrinv}, $T|_{\mathfrak{M}}$ is left invertible, that is, injective with closed range.

	If $T|_{\mathfrak{M}}=P_{\mathfrak{M}}TP_{\mathfrak{M}}$ is invertible, then $P_{\mathfrak{M}^{\perp}}T^{-1}P_{\mathfrak{M}}=0$. That is, $\mathfrak{M}\in \textup{Lat}T^{-1}$, and hence $T^{-1}$ is intransitive. In particular, if $\mathfrak{M}$ is finite dimensional, then $P_{\mathfrak{M}}TP_{\mathfrak{M}}$, being left invertible, is automatically invertible.

	Assume now that $\mathcal{R}:=\bigcap_{n\geq 1}(\ran T|_{\mathfrak{M}}^n)\neq \{0\}$. Since $T^n|_{\mathfrak{M}}$ is bounded below, each $T^n(\mathfrak{M})$ is closed, and hence so is $\mathcal{R}$. As $T|_{\mathfrak{M}}\in B(\mathfrak{M})$ is injective, Lemma \ref{L:the cap of T^nH} gives $T|_{\mathfrak{M}}\mathcal{R}=\mathcal{R}$; in particular, $\mathcal{R}$ is a nonzero subspace of $\mathfrak{M}$, hence a nontrivial invariant subspace of $T$. Therefore $T\mathcal{R}=\mathcal{R}$, so that $T^{-1}\mathcal{R}=\mathcal{R}$ and $T^{-1}$ is intransitive.
\end{proof}

We are now ready to prove Theorem \ref{C}.

\begin{proof}[\bf Proof of Theorem \ref{C}]
Since a conjugate bi-geometric operator has, by definition, a nontrivial invariant subspace, $(1)\Rightarrow (2)$ is immediate.

\smallskip

We prove $(2)\Rightarrow (1)$. Let $T\in B(\H)$ be an invertible operator with a nontrivial invariant subspace $\mathcal{N}$. If $\bigcap_{n\geq 1}(\ran T|_{\mathcal{N}}^n)\neq \{0\}$, then Lemma \ref{intersection} shows that $T^{-1}$ is intransitive; so we may suppose that $\bigcap_{n\geq 1}(\ran T|_{\mathcal{N}}^n)= \{0\}$.
In particular, $T|_{\mathcal{N}}$ is not invertible, while Lemma \ref{lrinv} shows that it is left invertible. Hence $\ker (T|_{\mathcal{N}})^*\neq \{0\}$. Choose a unit vector $e_0\in\ker(T|_{\mathcal{N}})^*$ and set
\[
	\mathfrak{M}=\bigvee\{T|_{\mathcal{N}}^ne_0; n\in\mathbb{Z}_{\geq 0}\}.
\]
Then $\mathfrak{M}\subseteq\mathcal{N}$ is a nontrivial invariant subspace of $T$. Write
\[
T=\begin{matrix}\begin{bmatrix}T_1& T_{12}\\ 0& T_2 \\ \end{bmatrix}&\begin{matrix}
		\mathfrak{M}\\
		\ \ \mathfrak{M}^{\perp}
	\end{matrix}
\end{matrix} \ \ .
\]
The restriction $T_1=T|_{\mathfrak{M}}$ is left invertible and $\ran T_1=\mathfrak{M}\ominus(\bigvee \{e_0\})$. Hence $\operatorname{codim}\, \ran T_1=1$, and
\[\bigcap_{n=1}^\infty \ran T_1^n=\bigcap_{n=1}^\infty T^n(\mathfrak{M})\subseteq\bigcap_{n=1}^\infty T^n(\mathcal{N})=\{0\}.\]
By Lemma \ref{L:CD operator}, $T_1^*\in B_1(O_{\delta_1})$ for some $\delta_1>0$.

If $\bigcap_{n\geq 1}(\ran(T_2^*)^n)\neq \{0\}$, then $T^{-1}$ is intransitive. Indeed, with respect to $\H=\mathfrak{M}^{\perp}\oplus\mathfrak{M}$ we may write
\[
T^*=\begin{matrix}\begin{bmatrix}T_2^*& T^*_{12}\\ 0& T_1^* \\ \end{bmatrix}&\begin{matrix}
		\mathfrak{M}^{\perp}\\
		\ \ \mathfrak{M}
	\end{matrix}
\end{matrix} \ \ ,
\]
so that $\mathfrak{M}^{\perp}$ is invariant under $T^*$ and $T^*|_{\mathfrak{M}^{\perp}}=T_2^*$. Lemma \ref{intersection} shows that $(T^*)^{-1}$ is intransitive, and hence so is $T^{-1}$: a nontrivial invariant subspace of $(T^{-1})^*$ has an orthogonal complement invariant under $T^{-1}$.
Hence we may assume that $\bigcap_{n\geq 1}(\ran(T_2^*)^n)=\{0\}$. In particular, $T_2$ is not invertible; since $T_2$ is always right invertible, we have $\ker T_2\neq \{0\}$.

\smallskip

We first observe that $T_{12}f\notin \ran T_1$ for every nonzero $f\in \ker T_2$. Indeed, if $0\neq f\in \ker T_2$ and $T_1 u=T_{12}f$ for some $u\in \mathfrak{M}$, then
\[
T\begin{bmatrix}-u\\ f \end{bmatrix}=\begin{bmatrix}-T_1u+T_{12}f\\T_2f  \end{bmatrix}=\begin{bmatrix}0\\ 0 \end{bmatrix},
\]
which contradicts the invertibility of $T$.

We next show that $\dim \ker T_2=1$. Suppose that $f_0,f_1\in \ker T_2$ are linearly independent. By the observation above, we may write
\[
T_{12}f_0=a_0e_0+g_0 \ \ \  \text{and} \ \ \ T_{12}f_1=a_1e_0+g_1,
\]
where $0\neq a_0,a_1\in\mathbb{C}$ and $g_0, g_1\in \ran T_1$. Then $0\neq a_1f_0-a_0f_1\in\ker T_2$, while
\[
T_{12}(a_1f_0-a_0f_1)=a_1g_0-a_0g_1\in\ran T_1,
\]
which contradicts the observation above.

\smallskip
Now $T_2^*$ is left invertible and $\dim\ker T_2=1$, while by assumption the intersection $\bigcap_{n\geq 1}\ran (T_2^*)^n$ is trivial. Applying Lemma \ref{L:CD operator} to $T_2^*$, we obtain $T_2\in B_1(O_{\delta_2})$ for some $\delta_2>0$.

Taking $\delta=\min\{\delta_1, \delta_2\}$, we conclude that $T$ has a conjugate bi-geometric form. This proves $(2)\Rightarrow(1)$ and completes the proof.
\end{proof}

\begin{thm}\label{T:characterization of invertibility of T}
Let $T\in B(\mathcal{H}\oplus \mathcal{H})$ be a conjugate bi-geometric operator given by
\[
T=\begin{bmatrix}
A & T_{12}\\
0 & B
\end{bmatrix}.
\]
Then the following are equivalent:
\begin{enumerate}
\item $T$ is invertible;
\item $T_{12}(\ker B) \not\subset \ran A$;
\item $T_{12}^*(\ker A^*) \not\subset \ran B^*$.
\end{enumerate}
\end{thm}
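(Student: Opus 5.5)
Here $A^*,B\in B_1(O_\delta)$, so $\dim\ker B=1=\dim\ker A^*$. Also $A$ is injective with closed range: $A^*$ is surjective, hence $A$ is bounded below. Similarly $B$ is surjective and $B^*$ is bounded below. Thus $\operatorname{codim}\ran A=1$ and $\ran B^*=(\ker B)^\perp$ has codimension one. Fix unit vectors $e\in\ker A^*$ and $f\in\ker B$. Then $\ran A=\{e\}^\perp$ and $\ran B^*=\{f\}^\perp$. Conditions (2) and (3) then both read $\langle T_{12}f,e\rangle\neq 0$: (2) says $T_{12}f\notin\{e\}^\perp$, and (3) says $T_{12}^*e\notin\{f\}^\perp$, i.e.\ $\langle f,T_{12}^*e\rangle\neq0$. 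So (2)$\Leftrightarrow$(3) is immediate. The substance is (1)$\Leftrightarrow$(2).

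For (1)$\Rightarrow$(2) I would reuse the observation from the proof of Theorem \ref{C}. If $T_{12}f\in\ran A$, say $T_{12}f=Au$, then $T(-u\oplus f)=0$, so $T$ is not injective.

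For (2)$\Rightarrow$(1) I would check injectivity and surjectivity directly; since $T$ is bounded, the open mapping theorem then gives invertibility.

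\emph{Injectivity.} Suppose $T(u\oplus v)=0$. Then $Bv=0$, so $v=cf$, and $Au+cT_{12}f=0$. Pairing with $e$ kills $Au$ and gives $c\langle T_{12}f,e\rangle=0$, hence $c=0$. Then $Au=0$, and $u=0$ because $A$ is injective.

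\emph{Surjectivity.} Given $x\oplus y$, pick $v_0$ with $Bv_0=y$, using that $B$ is surjective. We need $u$ and $c$ with
\[Au=x-T_{12}v_0-cT_{12}f.\]
Choose $c$ so that the right-hand side is orthogonal to $e$; this is possible since $\langle T_{12}f,e\rangle\neq0$. The right-hand side then lies in $\ran A=\{e\}^\perp$, so such a $u$ exists. Then $T(u\oplus(v_0+cf))=x\oplus y$.

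The only point needing care is the identification $\ran A=(\ker A^*)^\perp$ with $\ker A^*$ one-dimensional (and the analogous statement for $B$). This follows from the Cowen--Douglas hypothesis at $z=0\in O_\delta$: surjectivity of $A^*$ gives closed range of $A$, and $\dim\ker A^*=1$. Everything else is routine linear algebra.
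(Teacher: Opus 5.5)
Your proposal is correct and follows the paper's proof essentially step for step. It uses the same reduction of (2) and (3) to $\langle T_{12}f,e\rangle\neq0$ via $\ran A=(\ker A^*)^\perp$ and $\ran B^*=(\ker B)^\perp$ with one-dimensional kernels, the same kernel vector $(-u)\oplus f$ for (1)$\Rightarrow$(2), and the same direct injectivity and surjectivity check, followed by the bounded inverse theorem, for (2)$\Rightarrow$(1).
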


\begin{proof}
Since $A^*\in B_1(O_\delta)$ and $0\in O_\delta$, the operator $A^*$ is surjective; equivalently, $A$ is injective with closed range, and $\dim\ker A^*=1$. Choose $0\ne \varphi\in \ker A^*$. Then
\[
\ran A =  (\ker A^*)^\perp= \{x\in \mathcal{H}:\langle x,\varphi\rangle=0\}.
\]
Similarly, $B$ is surjective and $\dim\ker B=1$; choose $0\ne \psi\in \ker B$.

(1) $\Rightarrow$ (2). Suppose $T$ is invertible. If $T_{12}\psi\in \ran A$, then there exists $x\in \mathcal{H}$ with $Ax=T_{12}\psi$. Hence
\[
T\begin{bmatrix}-x\\ \psi\end{bmatrix}
=
\begin{bmatrix}
-Ax+T_{12}\psi\\
B\psi
\end{bmatrix}
=
\begin{bmatrix}0\\0\end{bmatrix},
\]
contradicting injectivity of $T$. Thus $T_{12}\psi\notin \ran A$.

\smallskip

(2) $\Rightarrow$ (1). Assume $T_{12}\psi\notin \ran A$. We first show that $T$ is injective. Suppose
\[T\begin{bmatrix}x\\ y\end{bmatrix}
=0.\]
Then $By=0$, so $y=\alpha \psi$ for some scalar $\alpha$. Also $Ax+T_{12}y=0$. If $\alpha\ne0$, then
\[
A(-\alpha^{-1}x)=T_{12}\psi,
\]
so $T_{12}\psi\in \ran A$, a contradiction. Hence $\alpha=0$, so $y=0$. Then $Ax=0$, and injectivity of $A$ gives $x=0$. Thus $T$ is injective.

\smallskip

We next show that $T$ is surjective. Let $(u,v)\in \mathcal{H}\oplus \mathcal{H}$. Since $B$ is surjective, choose $y_0\in \mathcal{H}$ with $By_0=v$. The general solution of $By=v$ is $y=y_0+\alpha\psi$, $\alpha\in\mathbb C$. We need $x\in \mathcal{H}$ such that
\[
Ax+T_{12}(y_0+\alpha\psi)=u.
\]
This is possible if and only if
\[
u-T_{12}(y_0+\alpha\psi)\in \ran A = (\ker A^*)^\perp.
\]
Equivalently, for $\varphi\in\ker A^*$,
\[
\langle u-T_{12}y_0-\alpha T_{12}\psi,\varphi\rangle=0.
\]
Since $T_{12}\psi\notin \ran A$, we have $\langle T_{12}\psi,\varphi\rangle\ne0$. Thus we may choose uniquely
\[
\alpha=\frac{\langle u-T_{12}y_0,\varphi\rangle}{\langle T_{12}\psi,\varphi\rangle}.
\]
With this choice, there exists $x\in \mathcal{H}$ satisfying the equation, so
\[
T\begin{bmatrix}x\\ y_0+\alpha\psi\end{bmatrix}=\begin{bmatrix}u\\ v\end{bmatrix}.
\]
Hence $T$ is surjective. By the bounded inverse theorem, $T$ is invertible.

\smallskip

(2) $\Leftrightarrow$ (3).
Note that
\[
\ran A = (\ker A^*)^\perp = \{\varphi\}^\perp,
\]
and
\[
\ran B^* = (\ker B)^\perp = \{\psi\}^\perp.
\]

First,
\[
T_{12}(\ker B) \not\subset \ran A
\iff
T_{12}\psi \notin \ran A
\iff
\langle T_{12}\psi,\varphi\rangle \ne 0. \tag{*}
\]

Second,
\[
T_{12}^*(\ker A^*) \not\subset \ran B^*
\iff
T_{12}^*\varphi \notin \ran B^*
\iff
\langle T_{12}^*\varphi,\psi\rangle \ne 0. \tag{**}
\]

But
\[
\langle T_{12}^*\varphi,\psi\rangle
= \langle \varphi, T_{12}\psi \rangle
= \overline{\langle T_{12}\psi,\varphi\rangle}.
\]
Hence
\[
\langle T_{12}\psi,\varphi\rangle \ne 0
\iff
\langle T_{12}^*\varphi,\psi\rangle \ne 0.
\]
Combining this with (*) and (**) yields
\[
T_{12}(\ker B) \not\subset \ran A
\iff
T_{12}^*(\ker A^*) \not\subset \ran B^*.
\]

\end{proof}

\smallskip

\begin{lem}\label{L:left inverse}
Let $A\in B(\mathcal{H})$ with $A^*\in B_1(O_\delta)$ for some $\delta>0$. Suppose that $x\in \ker A^*$ and $y\in \mathcal{H}$ satisfy $\langle y,x\rangle=1$.
Then there is a unique left inverse $A_L^{-1}$ of $A$ such that $AA_{L}^{-1}=I-y\otimes x$. Moreover, $A_L^{-1}y=0$.
\end{lem}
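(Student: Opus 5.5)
Since $A^*\in B_1(O_\delta)$ and $0\in O_\delta$, the operator $A^*$ is surjective with $\dim\ker A^*=1$. Hence $A$ is injective with closed range, and
\[
\ran A=(\ker A^*)^\perp=\{x\}^\perp .
\]
Here $x\neq 0$, because $\langle y,x\rangle=1$. The map $P:=I-y\otimes x$ satisfies $P^2=P$, since $(y\otimes x)^2=\langle y,x\rangle\, y\otimes x=y\otimes x$. Its kernel is $\bigvee\{y\}$ and its range is $\{h:\langle h,x\rangle=0\}=\ran A$. So $P$ is a (generally non-orthogonal) idempotent onto $\ran A$ along $\bigvee\{y\}$. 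Note that $y\notin\ran A$, since $\langle y,x\rangle\ne0$, so $\H=\ran A\dotplus\bigvee\{y\}$.

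\textbf{Existence.} Let $A_0:\H\to\ran A$ be the bounded inverse of the bijection $A:\H\to\ran A$; it is bounded by the bounded inverse theorem, since $\ran A$ is closed. Define $A_L^{-1}:=A_0P$. Then $A_L^{-1}A=A_0PA=A_0A=I$, because $PA=A$ (indeed $\langle Ah,x\rangle=\langle h,A^*x\rangle=0$). Also $AA_L^{-1}=AA_0P=P=I-y\otimes x$, since $AA_0$ is the identity on $\ran A$. Finally $A_L^{-1}y=A_0Py=A_0(y-\langle y,x\rangle y)=0$.

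\textbf{Uniqueness.} Suppose $L$ is any left inverse of $A$ with $AL=I-y\otimes x$. Since $A$ is injective and $AL=AA_L^{-1}$, we get $A(L-A_L^{-1})=0$, hence $L=A_L^{-1}$. In fact only the identity $AL=P$ together with injectivity of $A$ is needed. For the \emph{moreover} part, it also follows directly that $ALy=Py=0$, so $Ly=0$.

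There is no real obstacle. The only points needing care are that $\ran A$ is closed, so that $A_0$ is bounded, and that $\ran A=\{x\}^\perp$. Both follow from surjectivity of $A^*$ and $\dim\ker A^*=1$, which are part of the definition of $B_1(O_\delta)$ evaluated at $z=0$.
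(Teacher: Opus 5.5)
Your proof is correct and is essentially the paper's argument. The paper builds the same operator as a rank-one correction $A_{SL}^{-1}-z_0\otimes x_0$ of the standard left inverse, with $x_0=x/\|x\|$ and $Az_0=\|x\|y-x_0$; this unwinds to your $A_0(I-y\otimes x)$, and the paper obtains uniqueness and $A_L^{-1}y=0$ from the injectivity of $A$, just as you do. One notational slip: $A_0$ should be the map $\ran A\to\H$ inverting $A:\H\to\ran A$, not a map $\H\to\ran A$.
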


\begin{proof}
Since $\langle y,x\rangle=1$, the vectors $x$ and $y$ are nonzero.
Let
\[x_0=\frac{x}{\|x\|},~~y_0=\|x\|y.\]
Then $\|x_0\|=1$, and $\langle y_0,x_0\rangle=1$.
Write $y_0= x_0 + v$, where $\langle x_0, v \rangle = 0$, i.e., $v \in \ran A$, and choose $z_0$ such that $Az_0 = v$.
Set \[A_L^{-1} = A_{SL}^{-1} - z_0 \otimes x_0,\] where $A_{SL}^{-1}$ is the standard left inverse of $A$.
Then
\[
A_L^{-1}A = A_{SL}^{-1}A - (z_0 \otimes x_0)A = I,
\]
and
\[
A A_L^{-1} = A A_{SL}^{-1} - A z_0 \otimes x_0 = I - x_0 \otimes x_0 - v \otimes x_0 = I - y_0 \otimes x_0=I-y\otimes x.
\]
Moreover,
\[
A A_L^{-1}y=(I-y\otimes x)y=y-\langle y,x\rangle y=0.
\]
Since $A^*$ is surjective, $A$ is injective, so $A_L^{-1}y=0$. Finally, $A_L^{-1}$ is the unique left inverse of $A$ with the required property, because $A$ is injective.
\end{proof}

By Theorem \ref{T:characterization of invertibility of T}, we can write the form of $T^{-1}$ explicitly if $T$ is invertible.

\begin{prop}\label{P:form of inverse}
Let $T\in B(\mathcal{H}\oplus \mathcal{H})$ be an invertible conjugate bi-geometric operator given by
\[
T=\begin{bmatrix}
A & T_{12}\\
0 & B
\end{bmatrix}.
\]
Choose nonzero vectors $e_0\in \ker A^*$ and $f_0\in \ker B$ such that $\langle T_{12}f_0,e_0 \rangle=1$. Let $A_L^{-1}$
be the unique left inverse of $A$ such that $A A_L^{-1} = I - T_{12}f_0 \otimes e_0$;
and $B_R^{-1}$ be the unique right inverse of $B$ such that $ B_R^{-1}B = I -f_0 \otimes T_{12}^*e_0$.
Then
\[T^{-1}=\begin{bmatrix} A_L^{-1} & -A_L^{-1}T_{12}B_R^{-1} \\ f_0 \otimes e_0 & B_R^{-1} \end{bmatrix}.\]
\end{prop}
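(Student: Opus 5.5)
Since $T$ is invertible, it suffices to exhibit a two-sided inverse. We show that the displayed block matrix, call it $X$, satisfies $XT=I$; for an invertible $T$ a left inverse is automatically the inverse. We first record the relations coming from the choices made. Since $e_0\in\ker A^*$, we have $e_0\otimes e_0\cdot A=0$, i.e. $(f_0\otimes e_0)A=f_0\otimes A^*e_0=0$. Since $f_0\in\ker B$, we have $B f_0=0$. By Lemma \ref{L:left inverse}, applied with $x=e_0$ and $y=T_{12}f_0$ (so that $\langle y,x\rangle=1$), we get $A_L^{-1}A=I$, $AA_L^{-1}=I-T_{12}f_0\otimes e_0$, and $A_L^{-1}T_{12}f_0=0$. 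The dual statement for $B$ is obtained by applying Lemma \ref{L:left inverse} to $B^*$ (note $(B^*)^*=B\in B_1(O_\delta)$), with $x=f_0\in\ker B$ and $y=T_{12}^*e_0$, using $\langle T_{12}^*e_0,f_0\rangle=\overline{\langle T_{12}f_0,e_0\rangle}=1$. Taking adjoints gives $BB_R^{-1}=I$, $B_R^{-1}B=I-f_0\otimes T_{12}^*e_0$, and $(B_R^{-1})^*T_{12}^*e_0=0$. The last relation says $\langle B_R^{-1}u,T_{12}^*e_0\rangle=0$, that is, $\langle T_{12}B_R^{-1}u,e_0\rangle=0$ for all $u$.

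Now we compute $XT$ block by block.
\begin{itemize}
\item Entry $(1,1)$: $A_L^{-1}A=I$.
\item Entry $(2,1)$: $(f_0\otimes e_0)A=0$.
\item Entry $(2,2)$: $(f_0\otimes e_0)T_{12}+B_R^{-1}B=f_0\otimes T_{12}^*e_0+I-f_0\otimes T_{12}^*e_0=I$.
\item Entry $(1,2)$: $A_L^{-1}T_{12}-A_L^{-1}T_{12}B_R^{-1}B=A_L^{-1}T_{12}(f_0\otimes T_{12}^*e_0)=A_L^{-1}T_{12}f_0\otimes T_{12}^*e_0=0$.
\end{itemize}
Hence $XT=I$, and therefore $X=T^{-1}$.

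As a consistency check, one can also verify $TX=I$:
\begin{itemize}
\item Entry $(1,1)$: $AA_L^{-1}+T_{12}f_0\otimes e_0=I$.
\item Entry $(2,1)$: $Bf_0\otimes e_0=0$.
\item Entry $(2,2)$: $BB_R^{-1}=I$.
\item Entry $(1,2)$: $-AA_L^{-1}T_{12}B_R^{-1}+T_{12}B_R^{-1}=(T_{12}f_0\otimes e_0)T_{12}B_R^{-1}$, which vanishes precisely because $\langle T_{12}B_R^{-1}u,e_0\rangle=0$.
\end{itemize}

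The main care needed is in transferring Lemma \ref{L:left inverse} to the right inverse $B_R^{-1}$ with the correct adjoint conventions. This is also what justifies the uniqueness implicit in the statement: $B_R^{-1}$ is the adjoint of the unique left inverse of $B^*$ with the prescribed property. The $(1,2)$ entry of $TX$ depends on that adjoint relation, but the $XT=I$ computation avoids it altogether.
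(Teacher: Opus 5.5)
Your proof is correct and follows the paper's own argument. Like the paper, you obtain $A_L^{-1}$ and $B_R^{-1}=C^*$ from Lemma \ref{L:left inverse} applied to $A$ and to $B^*$. You then check the block products entry by entry, using $A_L^{-1}T_{12}f_0=0$ and $(B_R^{-1})^*T_{12}^*e_0=0$. Your observation that $XT=I$ alone suffices, since $T$ is assumed invertible, is a small economy over the paper, which verifies both $TQ=I$ and $QT=I$.
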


\begin{proof}
Since $T$ is invertible, Theorem \ref{T:characterization of invertibility of T} shows that such vectors $e_0$ and $f_0$ exist. Now $A^*\in B_1(O_\delta)$ for some $\delta>0$, while $e_0\in \ker A^*$ and $\langle T_{12}f_0,e_0\rangle=1$; by Lemma \ref{L:left inverse}, there is a unique left inverse $A_L^{-1}$ of $A$ such that
\[A A_L^{-1} = I - T_{12}f_0 \otimes e_0,\]
and
\[A_L^{-1}T_{12}f_0=0.\]

Similarly, $(B^*)^*=B\in B_1(O_\delta)$, $f_0\in \ker B=\ker (B^*)^*$, and $\langle T_{12}^*e_0, f_0\rangle=\overline{\langle T_{12}f_0,e_0\rangle}=1$. By Lemma \ref{L:left inverse} again, there is a unique left inverse $C$ of $B^*$ such that
  \[B^*C=I-T_{12}^*e_0\otimes f_0,\]
  and
  \[C(T_{12}^*e_0)=0.\]
Set $B_R^{-1}=C^*$. Then
\[
B_R^{-1}B =(B^*C)^*=(I-T_{12}^*e_0\otimes f_0)^*=I - f_0 \otimes T_{12}^* e_0, \]
so that $B_R^{-1}$ is a right inverse of $B$, and
\[ (B_R^{-1})^*(T_{12}^* e_0)=C(T_{12}^*e_0)=0.
\]

Write
\[
Q = \begin{bmatrix} A_L^{-1} & -A_L^{-1}T_{12}B_R^{-1} \\ f_0 \otimes e_0 & B_R^{-1} \end{bmatrix}.
\]
Then,
\begin{align*}
TQ &= \begin{bmatrix} A & T_{12} \\ 0 & B \end{bmatrix}
\begin{bmatrix} A_L^{-1} & -A_L^{-1}T_{12}B_R^{-1} \\ f_0 \otimes e_0 & B_R^{-1} \end{bmatrix} \\
&= \begin{bmatrix} A A_L^{-1} + T_{12}f_0 \otimes e_0 & -A A_L^{-1}T_{12}B_R^{-1} + T_{12}B_R^{-1} \\ B f_0 \otimes e_0 & B B_R^{-1} \end{bmatrix} \\
&= \begin{bmatrix} I & (T_{12}f_0 \otimes e_0)T_{12}B_R^{-1} \\ 0 & I \end{bmatrix}\\
&= \begin{bmatrix} I & 0 \\ 0 & I \end{bmatrix},
\end{align*}
where the $(1,2)$ entry vanishes because $(B_R^{-1})^*(T_{12}^*e_0)=0$.

\begin{align*}
QT  &=
\begin{bmatrix} A_L^{-1} & -A_L^{-1}T_{12}B_R^{-1} \\ f_0 \otimes e_0 & B_R^{-1} \end{bmatrix}\begin{bmatrix} A & T_{12} \\ 0 & B \end{bmatrix} \\
&= \begin{bmatrix} A_L^{-1}A & A_L^{-1}T_{12} - A_L^{-1}T_{12}B_R^{-1}B \\ (f_0 \otimes e_0)A & (f_0 \otimes e_0)T_{12} + B_R^{-1}B \end{bmatrix} \\
&= \begin{bmatrix} I & A_L^{-1}T_{12}(f_0 \otimes T_{12}^* e_0) \\ f_0 \otimes (A^* e_0) & I \end{bmatrix}\\
&= \begin{bmatrix} I & 0 \\ 0 & I \end{bmatrix},
\end{align*}
where the $(1,2)$ entry vanishes because $A_L^{-1}T_{12}f_0=0$, and the $(2,1)$ entry vanishes because $e_0\in\ker A^*$.

Therefore $T$ is invertible with $T^{-1} = Q$.
\end{proof}

\begin{cor}\label{C:rank-one}
Let $T\in B(\mathcal{H}\oplus \mathcal{H})$ be a conjugate bi-geometric operator given by
\[
T=\begin{bmatrix}
A & T_{12}\\
0 & B
\end{bmatrix},
\]
where $T_{12}=e\otimes f$ is a rank-one operator. Then $T$ is invertible if and only if $e\notin \ran A$ and $f\notin \ran B^*$.
Furthermore, if $T$ is invertible, then $T^{-1}$ is intransitive.
\end{cor}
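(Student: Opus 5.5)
The plan is to read both assertions off Theorem \ref{T:characterization of invertibility of T} and Proposition \ref{P:form of inverse}, using that $T_{12}=e\otimes f$ acts by $T_{12}h=\langle h,f\rangle e$.

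\emph{Invertibility.} As in the proof of Theorem \ref{T:characterization of invertibility of T}, $\ker B$ is one-dimensional, spanned by some $\psi\neq0$, and $\ran B^*=\{\psi\}^\perp$. That theorem says $T$ is invertible if and only if $T_{12}\psi\notin\ran A$. Since $T_{12}\psi=\langle\psi,f\rangle e$ and $\ran A$ is a linear subspace containing $0$, this holds exactly when two things hold. First, $\langle\psi,f\rangle\neq0$, which is equivalent to $f\notin\{\psi\}^\perp=\ran B^*$. Second, $e\notin\ran A$. This proves the equivalence.

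\emph{Intransitivity of $T^{-1}$.} Assume $T$ is invertible, and choose $e_0\in\ker A^*$ and $f_0\in\ker B$ with $\langle T_{12}f_0,e_0\rangle=1$, as in Proposition \ref{P:form of inverse}. That proposition gives
\[
T^{-1}=\begin{bmatrix} A_L^{-1} & -A_L^{-1}T_{12}B_R^{-1} \\ f_0\otimes e_0 & B_R^{-1}\end{bmatrix}.
\]
Its proof also gives $A_L^{-1}T_{12}f_0=0$, via Lemma \ref{L:left inverse}. Now $T_{12}f_0=\langle f_0,f\rangle e$, and $\langle f_0,f\rangle\neq0$ because $f_0$ is a nonzero multiple of $\psi$. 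Hence $A_L^{-1}e=0$.

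It follows that
\[
A_L^{-1}T_{12}B_R^{-1}=A_L^{-1}\bigl(e\otimes (B_R^{-1})^*f\bigr)=(A_L^{-1}e)\otimes(B_R^{-1})^*f=0,
\]
so the $(1,2)$ entry of $T^{-1}$ vanishes. Therefore $T^{-1}$ is lower triangular with respect to $\mathcal H\oplus\mathcal H$. Concretely, $T^{-1}(0\oplus v)=(0,\,B_R^{-1}v)$, so the second summand $0\oplus\mathcal H$ is invariant under $T^{-1}$. This subspace is neither $\{0\}$ nor the whole space, so $T^{-1}$ is intransitive.

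There is no real obstacle. The only point to check carefully is the identity $A_L^{-1}e=0$. It depends on $\langle f_0,f\rangle\neq0$, which comes from $f\notin\ran B^*$, and on the normalization $A_L^{-1}T_{12}f_0=0$ built into Lemma \ref{L:left inverse}.
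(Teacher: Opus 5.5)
Your proposal is correct and follows essentially the same route as the paper. Both arguments read invertibility off Theorem \ref{T:characterization of invertibility of T}. The paper invokes conditions (2) and (3) together and then discards the redundant conditions, whereas your use of (2) alone is slightly more economical. Both then deduce $A_L^{-1}e=0$ from $A_L^{-1}T_{12}f_0=0$ and $\langle f_0,f\rangle\neq0$, so the $(1,2)$ entry of $T^{-1}$ vanishes and $\{0\}\oplus\mathcal H$ is invariant under $T^{-1}$.
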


\begin{proof}
Choose unit vectors $e_1\in \ker A^*$ and $f_1\in \ker B$. By Theorem \ref{T:characterization of invertibility of T}, $T$ is invertible if and only if
$T_{12}f_1\notin \ran A$ and $T_{12}^*e_1\notin \ran B^*$. Therefore, $T$ being invertible is equivalent to
\[\langle f_1,f\rangle e\notin \ran A,~~\langle e_1,e \rangle f \notin \ran B^*.\]
Hence these are equivalent to
\[\langle f_1,f\rangle\neq 0, e\notin \ran A,\langle e_1,e \rangle\neq 0, f \notin \ran B^*.\]
Note that $\langle f_1,f\rangle\neq 0$ and $f \notin \ran B^*$ are equivalent;
$e\notin \ran A$ and $\langle e_1,e \rangle\neq 0$ are equivalent. Therefore,
$T$ is invertible if and only if $e\notin \ran A$ and $f\notin \ran B^*$.

Furthermore, suppose that $T$ is invertible, and choose nonzero vectors $e_0\in \ker A^*$ and $f_0\in \ker B$ such that $\langle T_{12}f_0,e_0 \rangle=1$. Let $A_L^{-1}$ be the unique left inverse of $A$ such that $A A_L^{-1} = I - T_{12}f_0 \otimes e_0$, and let $B_R^{-1}$ be the unique right inverse of $B$ such that $ B_R^{-1}B = I -f_0 \otimes T_{12}^*e_0$. By Proposition \ref{P:form of inverse},
\[T^{-1}=\begin{bmatrix} A_L^{-1} & -A_L^{-1}T_{12}B_R^{-1} \\ f_0 \otimes e_0 & B_R^{-1} \end{bmatrix}=\begin{bmatrix} A_L^{-1} & -(A_L^{-1}e)\otimes ((B_R^{-1})^*f) \\ f_0 \otimes e_0 & B_R^{-1} \end{bmatrix}.\]
Hence,
\[
A A_L^{-1}T_{12}f_0=(I-T_{12}f_0\otimes e_0)T_{12}f_0=T_{12}f_0-\langle T_{12}f_0,e_0\rangle T_{12}f_0=0.
\]
As $A$ is injective, we have $A_L^{-1}T_{12}f_0=0$.
Thus,
\[A_L^{-1}(e\otimes f)f_0=0.\]
That is, $\langle f_0, f \rangle A_L^{-1}e=0$. Since $f_0\in\ker B$ and $f\notin \ran B^*=(\ker B)^\perp$, we have $\langle f_0, f \rangle\neq 0$.
Therefore, $A_{L}^{-1}e=0$. Consequently,
\[T^{-1}=\begin{bmatrix} A_L^{-1} & -(A_L^{-1}e)\otimes ((B_R^{-1})^*f) \\ f_0 \otimes e_0 & B_R^{-1} \end{bmatrix}=\begin{bmatrix} A_L^{-1} &  0\\ f_0 \otimes e_0 & B_R^{-1} \end{bmatrix}.\]
Since the $(1,2)$ entry of $T^{-1}$ vanishes, $\{0\}\oplus\mathcal{H}$ is a nontrivial invariant subspace of $T^{-1}$, so that $T^{-1}$ is intransitive.

\end{proof}

\begin{rem}
The operator $T$ in the above corollary is a generalization of the bilateral shift operator.
\end{rem}

We now prove the general criterion stated in the introduction as Theorem \ref{D}.

\begin{proof}[\bf Proof of Theorem \ref{D}]
Let $\mathcal{N}=\ran P$, so that $\H=\mathcal{N}\oplus\mathcal{N}^{\perp}$ and
\[
T=\begin{bmatrix}A&T_{12}\\0&B\end{bmatrix},\qquad T_{12}=PT(I-P)=e\otimes f,
\]
where $e\in\mathcal{N}$ and $f\in\mathcal{N}^{\perp}$ are nonzero, since $PT(I-P)$ is of rank one. As $T$ is invertible, Lemma \ref{lrinv} shows that $A$ is left invertible and $B$ is right invertible.

If $\bigcap_{n\geq 1}\ran A^n\neq\{0\}$, then Lemma \ref{intersection} shows that $T^{-1}$ is intransitive. If $\bigcap_{n\geq 1}\ran((B^*)^n)\neq\{0\}$, then the same lemma, applied to the invertible operator $T^*$ and its invariant subspace $\mathcal{N}^{\perp}$, gives the same conclusion. Thus we may assume that
\[
\bigcap_{n\geq 1}\ran A^n=\bigcap_{n\geq 1}\ran((B^*)^n)=\{0\};
\]
in particular, neither $A$ nor $B$ is invertible, so that $\ker A^*\neq\{0\}$ and $\ker B\neq\{0\}$.

\smallskip

We claim that $\dim\ker A^*=\dim\ker B=1$. Let $0\neq y\in\ker B$. If $T_{12}y=Au$ for some $u\in\mathcal{N}$, then
\[
T\begin{bmatrix}-u\\ y\end{bmatrix}=\begin{bmatrix}-Au+T_{12}y\\ By\end{bmatrix}=\begin{bmatrix}0\\ 0\end{bmatrix},
\]
which contradicts the invertibility of $T$; hence $T_{12}y=\langle y,f\rangle e\notin\ran A$, and in particular $\langle y,f\rangle\neq0$. If $g_1,g_2\in\ker B$ were linearly independent, then $y=\langle g_2,f\rangle g_1-\langle g_1,f\rangle g_2$ would be a nonzero vector in $\ker B$ with $\langle y,f\rangle=0$, a contradiction. Hence $\dim\ker B=1$. The same argument, applied to
\[
T^*=\begin{bmatrix}B^*&T_{12}^*\\0&A^*\end{bmatrix}
\]
with respect to $\H=\mathcal{N}^{\perp}\oplus\mathcal{N}$, where $T_{12}^*=f\otimes e$ is again of rank one, yields $\dim\ker A^*=1$.

\smallskip

By Lemma \ref{L:CD operator}, applied to $A$ and to $B^*$, we obtain $A^*\in B_1(O_{\delta_1})$ and $B\in B_1(O_{\delta_2})$ for some $\delta_1,\delta_2>0$. Thus $T$ is an invertible conjugate bi-geometric operator with respect to $\mathcal{N}\oplus\mathcal{N}^{\perp}$ whose off-diagonal entry is of rank one, and Corollary \ref{C:rank-one} completes the proof.
\end{proof}

\begin{eg}
Let $A\in B(H^2)$ be an invertible analytic Toeplitz operator, let $C\in B(H^2)$ be an invertible co-analytic Toeplitz operator, and let $B\in \ran \tau_{S,S^*}$.
Set
	\[
T_1=\begin{matrix}\begin{bmatrix}S& B+AC\\ 0& S^* \\ \end{bmatrix}&\begin{matrix}
		H^2\\
		H^2
	\end{matrix}
\end{matrix} \,
\]
Then $T_1$ is invertible and $T_1^{-1}$ is intransitive.

\noindent{\bf Step 1.} In fact, as $A$ is an invertible analytic Toeplitz operator, it follows that $AS=SA$; similarly, as $C$ is an invertible co-analytic Toeplitz operator, $CS^*=S^*C$. Both assertions are instances of \cite[Theorem 3.2.13]{MarRos07}: the shift and the backward shift are the analytic and co-analytic Toeplitz operators with symbols $z$ and $\bar z$, respectively, and two Toeplitz operators with symbols in $L^\infty$ commute whenever they are both analytic or both co-analytic.

Since $B\in \ran \tau_{S,S^*}$, there exists $X\in B(H^2)$ such that $B=SX-XS^*$.
Define
\[G=\begin{bmatrix}I& -X\\ 0& I\\ \end{bmatrix}\begin{bmatrix}A& 0\\ 0& C^{-1}\\ \end{bmatrix},\]
Then clearly $G$ is invertible.

Then it is straightforward to check that
\[
\begin{bmatrix}A^{-1}& 0\\ 0& C \\ \end{bmatrix}\begin{bmatrix}I& X\\ 0& I\\ \end{bmatrix}\begin{bmatrix}S& B+AC\\ 0& S^* \\ \end{bmatrix}\begin{bmatrix}I& -X\\ 0& I\\ \end{bmatrix}\begin{bmatrix}A& 0\\ 0& C^{-1} \\ \end{bmatrix}=G^{-1}T_1G=\begin{bmatrix}S& I\\ 0& S^* \\ \end{bmatrix}.
\]
Thus $T_1$ is similar to the operator
\[
T=\begin{bmatrix}S& I\\ 0& S^* \\ \end{bmatrix},
\]
and it suffices to show that $T$ is invertible and that $T^{-1}$ is intransitive.

\bigskip

\noindent{\bf Step 2.} Let $\{e_0,e_1,e_2,\cdots\}$ and $\{f_0,f_1,f_2,\cdots\}$ be orthonormal bases of $\mathcal{H}_1:=H^2\oplus 0$ and $\mathcal{H}_2:=0\oplus H^2$, respectively. With respect to the union of these two bases, which is an orthonormal basis of $\mathcal{H}_1\oplus\mathcal{H}_2$, the operator $T$ is given by
	\[
T=\begin{matrix}\begin{bmatrix}S& I\\ 0& S^* \\ \end{bmatrix}&\begin{matrix}
		\mathcal{H}_1\\
		 \mathcal{H}_2
	\end{matrix}
\end{matrix} \,
\]
where $S$ is the forward shift operator and $S^*$ is the backward shift operator. It is evident that $T$ has a conjugate bi-geometric form. By Theorem \ref{T:characterization of invertibility of T} and Proposition \ref{P:form of inverse}, $T$ is invertible and
	\[
T^{-1}=\begin{matrix}\begin{bmatrix}S^*& -I\\ f_0\otimes e_0& S \\ \end{bmatrix}&\begin{matrix}
		\mathcal{H}_1\\
		\mathcal{H}_2
	\end{matrix}
\end{matrix} \,
\]

\noindent{\bf Step 3.} Put
\[
x_0=\sum\limits_{n=0}^{\infty} \alpha_ne_n.
\]
We have
\begin{align*}
T^{-1}x_0&=S^*(\sum\limits_{n=0}^{\infty} \alpha_ne_n)+ f_0\otimes e_0 (\sum\limits_{n=0}^{\infty} \alpha_ne_n) \\
&=\sum\limits_{n=0}^{\infty} \alpha_{n+1}e_n+\alpha_0f_0, \\
T^{-2}x_0&=(\alpha_2-\alpha_0)e_0+\sum\limits_{n=1}^{\infty} \alpha_{n+2}e_n+ \alpha_1f_0+ \alpha_0f_1, \\
T^{-3}x_0&=(\alpha_3-\alpha_1)e_0+(\alpha_4-\alpha_0)e_1+\sum\limits_{n=2}^{\infty} \alpha_{n+3}e_n+(\alpha_2-\alpha_0)f_0+ \alpha_1f_1+ \alpha_0f_2.
\end{align*}
We claim that for $k\ge 3$,
\[T^{-k}x_0=\sum\limits_{n=0}^{\infty} \alpha_{n+k}e_n-\sum\limits_{n=0}^{k-2} \alpha_{k-2-n}e_n+\sum\limits_{n=0}^{k-1} \alpha_{k-1-n}f_n-\sum\limits_{n=0}^{k-3} \alpha_{k-3-n}f_n. \tag{*}\]
It is easy to see that the equation (*) holds for $k=3$. Suppose that the equation (*) holds for $k=m$. Since
\begin{align*}
	T^{-(m+1)}x_0=& T^{-1}(T^{-m}x_0) \\
	=& T^{-1}\sum\limits_{n=0}^{\infty}\alpha_{n+m}e_n-T^{-1}\sum\limits_{n=0}^{m-2} \alpha_{m-2-n}e_n \\
	&+T^{-1}\sum\limits_{n=0}^{m-1} \alpha_{m-1-n}f_n-T^{-1}\sum\limits_{n=0}^{m-3} \alpha_{m-3-n}f_n \\
	=& \left(\sum\limits_{n=0}^{\infty}\alpha_{n+m+1}e_n+\alpha_{m}f_0\right)-\left(\sum\limits_{n=0}^{m-3} \alpha_{m-3-n}e_n+\alpha_{m-2}f_0\right) \\
	&+\left(\sum\limits_{n=0}^{m-1} \alpha_{m-1-n}f_{n+1}-\sum\limits_{n=0}^{m-1}\alpha_{m-1-n}e_n\right) \\
	&-\left(\sum\limits_{n=0}^{m-3} \alpha_{m-3-n}f_{n+1}-\sum\limits_{n=0}^{m-3} \alpha_{m-3-n}e_n\right) \\
	=& \sum\limits_{n=0}^{\infty} \alpha_{n+m+1}e_n-\sum\limits_{n=0}^{m-1} \alpha_{m-1-n}e_n+\sum\limits_{n=0}^{m} \alpha_{m-n}f_n-\sum\limits_{n=0}^{m-2} \alpha_{m-2-n}f_n,
\end{align*}
the equation (*) holds for $k=m+1$. Therefore, the equation (*) holds for all $k\ge 3$. In particular,
\[
\langle x_0,~ e_0 \rangle=\alpha_0, \ \ \langle T^{-1}x_0,~ e_0 \rangle=\alpha_1 \ \ \text{and} \ \ \langle T^{-k}x_0,~ e_0 \rangle=\alpha_k-\alpha_{k-2} \ \ \text{for} \ k\ge 2.
\]
Then the function
\[
g(z):=\sum\limits_{k=0}^{\infty}\langle T^{-k}x_0,~ e_0 \rangle z^k=(1-z^2)\sum\limits_{k=0}^{\infty} \alpha_k z^k.
\]
If we set $\alpha_k=2^{-k}$ for $k\in\mathbb{N}$, then
\[g(z)=\frac{1-z^2}{1-{z}/{2}}\]
 is a rational function. Therefore,
 by Theorem \ref{T:FHJZ1}, the operator $T^{-1}$ has a nontrivial invariant subspace.
\end{eg}
 \begin{rem}
 We could also prove the intransitivity of $T^{-1}$ by Theorem \ref{A}; this example reveals that the Rosenblum operators indeed play an important role in studying the invariant subspace problem once more.
 \end{rem}

\begin{rem}
\emergencystretch=1.5em Very recently, Ji, Jiang and Wang \cite{JJW} have made further progress along this line. Employing the Theorem \ref{C} of the present paper as well as techniques from complex geometry and operator theory, they prove that the inverse of a conjugate bi-geometric operator $T$ has a nontrivial invariant subspace whenever $T$ has the bundle-preserving property (\cite[Theorem 3.5]{JJW}), thus giving an affirmative answer to Halmos's third problem for this class of operators. They also show that the same conclusion holds for a class of essentially unitary conjugate bi-geometric operators associated with weighted Bergman spaces, and they characterize the scope of the operators for which their methods apply.

\end{rem}

\section{Tridiagonal operators}\label{S:tridiagonal}

The study of invariant subspaces of tridiagonal operators goes back to Duren \cite{Duren63}, who showed that for a bounded regular tridiagonal operator on $\ell^2$ the lattice of invariant subspaces is anti-isomorphic to the lattice of inner functions. Grivaux \cite{Gri02} later showed, by means of moment sequences, that the weak symmetry condition $t_{n,n+1}t_{n+1,n}\geq 0$ on a real tridiagonal matrix already forces the operator to have a nontrivial closed invariant subspace. 
Clemente and Gallardo-Guti\'errez \cite{CG26} proved more recently that a hyponormal operator of the form $T=R+V$, with $R$ tridiagonal and $V$ of trace class, has nontrivial closed hyperinvariant subspaces unless it is a multiple of the identity; the key step is that no non-normal hyponormal tridiagonal operator is quasitriangular. Whether every tridiagonal operator has a nontrivial closed invariant subspace nevertheless remains open; the theorem below provides another affirmative case.

\begin{thm}\label{T:tridiagonal}
Let $\{e_0,e_1,e_2,\dots\}$ be an orthonormal basis of $\mathcal H$, and let $T\in B(\mathcal H)$ be the tridiagonal operator whose matrix with respect to this basis is
\[
T=\begin{bmatrix}
\lambda_0 & \beta_1 & & & \\
\alpha_1 & \lambda_1 & \beta_2 & & \\
& \alpha_2 & \lambda_2 & \beta_3 & \\
& & \ddots & \ddots & \ddots
\end{bmatrix}.
\]
Suppose that $\lambda_{2k}=\lambda$ for all $k\ge0$ and that
\[
\frac{\sup_{k\ge1}|\beta_{2k-1}|}{\inf_{k\ge1}|\alpha_{2k}|}<1.
\]
Then $T$ is intransitive. If, in addition,
\[
\frac{\sup_{k\ge1}|\beta_{2k}|}{\inf_{k\ge0}|\alpha_{2k+1}|}<1,
\]
then $T$ admits a sequence $\{\mathfrak M_n\}_{n=0}^{\infty}$ of invariant subspaces such that
\[
\mathfrak M_0\supsetneq \mathfrak M_1\supsetneq \mathfrak M_2\supsetneq\cdots.
\]
\end{thm}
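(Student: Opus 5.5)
The plan is to work with the adjoint and find an eigenvector of $T^*$ explicitly. With the indexing of the matrix, $Te_j=\beta_je_{j-1}+\lambda_je_j+\alpha_{j+1}e_{j+1}$, where we set $\beta_0e_{-1}:=0$. Hence
\[
T^*e_j=\overline{\alpha_j}e_{j-1}+\overline{\lambda_j}e_j+\overline{\beta_{j+1}}e_{j+1}.
\]
Since all the even diagonal entries equal $\lambda$, I will look for a vector supported on the even indices, $x=\sum_{k\ge0}c_ke_{2k}$, with $T^*x=\overline{\lambda}x$. In $(T^*-\overline\lambda)x$ the diagonal contributions cancel, and only odd basis vectors appear. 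Writing $\alpha_0:=0$, we get
\[
(T^*-\overline\lambda)x=\sum_{k\ge0}\bigl(c_k\overline{\beta_{2k+1}}+c_{k+1}\overline{\alpha_{2k+2}}\bigr)e_{2k+1}.
\]
So the eigen-equation is equivalent to the recursion $c_{k+1}=-c_k\overline{\beta_{2k+1}}/\overline{\alpha_{2k+2}}$, which makes sense because $\inf_k|\alpha_{2k}|>0$. Starting from $c_0=1$, the first hypothesis gives $|c_{k+1}|\le q|c_k|$ with $q<1$, so $x\in\mathcal H$ and $x\neq0$. Therefore $\ker(T^*-\overline\lambda)\ne\{0\}$, and $\mathfrak M_1:=\overline{\ran(T-\lambda)}$ is a proper subspace that is invariant under $T$. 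It is nonzero because $T\ne\lambda I$: for instance $\alpha_2\neq0$. This proves that $T$ is intransitive.

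For the chain I would take $\mathfrak M_0=\mathcal H$ and $\mathfrak M_n:=\overline{\ran(T-\lambda)^n}$. These subspaces are $T$-invariant and decreasing, and $\mathfrak M_n^\perp=\ker(T^*-\overline\lambda)^n$. Strictness of the chain is therefore equivalent to the strict growth of these kernels. This follows if I can build a Jordan chain $x_1=x,\ x_2,\ x_3,\dots$ in $\mathcal H$ with $(T^*-\overline\lambda)x_{n+1}=x_n$. Indeed, $x_n$ lies in $\ker(T^*-\overline\lambda)^n$ but not in $\ker(T^*-\overline\lambda)^{n-1}$, because $(T^*-\overline\lambda)^{n-1}x_n=x\ne0$.

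To solve $(T^*-\overline\lambda)y=w$, I read off the coefficient of $e_j$:
\[
\overline{\alpha_{j+1}}\,y_{j+1}+(\overline{\lambda_j}-\overline\lambda)\,y_j+\overline{\beta_j}\,y_{j-1}=w_j .
\]
This is a forward recursion determining $y_{j+1}$, with one free parameter $y_0$; the homogeneous solutions form exactly the multiples of $x$. At even $j$ the middle term vanishes. Given $y_{2k}$, I would use the even equation to determine the odd coefficient $y_{2k+1}$, and the odd equation to determine the next even coefficient $y_{2k+2}$. The two ratio hypotheses, $\sup|\beta_{2k-1}|/\inf|\alpha_{2k}|<1$ and $\sup|\beta_{2k}|/\inf|\alpha_{2k+1}|<1$, are what should give geometric-type control of this recursion. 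I would then check inductively that if $w=x_n$ decays at a geometric rate, then $y=x_{n+1}$ can be chosen in $\ell^2$, possibly with a slightly worse rate or polynomial factors, using the freedom to add multiples of $x$.

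The main obstacle is this $\ell^2$ control of the recursion. The odd diagonal terms $(\lambda_{2k+1}-\lambda)$ are not controlled by the hypotheses, so the naive forward iteration could grow. If direct estimates fail, I would instead decouple even and odd coordinates. Write $T^*-\overline\lambda$ in block form with respect to $\bigvee\{e_{2k}\}\oplus\bigvee\{e_{2k+1}\}$. The second hypothesis should make the odd-to-even block, essentially the bidiagonal map with entries $\overline{\alpha_{2k+1}}$ and $\overline{\beta_{2k}}$, bounded below and with a computable range. I would then solve for the odd part of $y$ first and the even part second, reducing everything to inverting bidiagonal operators whose off-diagonal/diagonal ratio is below $1$, where Neumann series estimates apply.
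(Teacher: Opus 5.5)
Your intransitivity argument is correct and is the paper's argument made explicit. With respect to $\bigvee\{e_{2k}\}\oplus\bigvee\{e_{2k+1}\}$ the paper writes
\[
(T-\lambda I)^*=\begin{bmatrix}0&B^*\\ A^*&\widetilde D_2^*\end{bmatrix},
\]
and it obtains the same even-supported vector in $\ker(T-\lambda I)^*$ from the factorization $A=(D_AW_A+I)V_A$, whereas you solve the two-term recursion directly. Your reduction of the chain to a Jordan chain $(T^*-\overline\lambda)x_{n+1}=x_n$ is also what the paper does. Your strictness argument via $(T^*-\overline\lambda)^{n-1}x_n=x\neq0$ is fine, and including $\mathfrak M_0=\mathcal H$ is harmless, though trivial.

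The step you flag yourself is not yet proved: that $(T^*-\overline\lambda)y=w$ has a solution $y\in\mathcal H$. As written it is hedged, but the worry behind it is unfounded. Write $y=(u,v)$ and $w=(u_w,v_w)$ in even and odd coordinates.

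Your even-index equations say $B^*v=u_w$, where $B^*$ is lower bidiagonal with diagonal $\overline{\alpha_{2k+1}}$ and subdiagonal $\overline{\beta_{2k}}$. By the second hypothesis, $B^*$ is an invertible diagonal operator times $I+N$ with $\|N\|<1$. So it is invertible, not merely bounded below, and $v=(B^*)^{-1}u_w$ is forced.

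Your odd-index equations say $A^*u=v_w-\widetilde D_2^*v$. The odd diagonal entries $\lambda_{2k+1}-\lambda$ enter only through the bounded diagonal operator $\widetilde D_2^*$, applied to the already determined $v\in\ell^2$, so they cannot cause growth. The block $A^*$ (diagonal $\overline{\beta_{2k+1}}$, superdiagonal $\overline{\alpha_{2k+2}}$) is not invertible, since your $x$ spans its kernel. What you need is surjectivity rather than an inverse, and the first hypothesis gives it. For $r\in\ell^2$, the forward recursion $u_0=0$, $u_{k+1}=(r_k-\overline{\beta_{2k+1}}u_k)/\overline{\alpha_{2k+2}}$ satisfies $|u_{k+1}|\le q|u_k|+|r_k|/a$, where $q<1$ is the first ratio and $a=\inf_{k\ge1}|\alpha_{2k}|$. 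Hence $\|u\|\le\|r\|/(a(1-q))$.

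So $T^*-\overline\lambda$ is onto, and every forward solution of your recursion lies in $\ell^2$ for every $w\in\mathcal H$. Neither geometric decay of $w$ nor correction by multiples of $x$ is needed. This is exactly the paper's Step 3: set $v_{n+1}=(B^*)^{-1}u_n$, then get $u_{n+1}$ from the surjectivity of $A^*$. With it inserted, your proof is complete.
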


\begin{proof}
\noindent{\bf Step 1.} With respect to the decomposition
\[
\mathcal{H}=(\bigvee\{e_0,e_2,e_4,\dots\})
\oplus
(\bigvee\{e_1,e_3,e_5,\dots\}),
\]
the operator $T$ has the block form
\[
T=
\begin{bmatrix}
D_1 & A\\
B & D_2
\end{bmatrix},
\]
where
\[
D_1=\lambda I,
\qquad
A=
\begin{bmatrix}
\beta_1 & 0 & 0 & \cdots\\
\alpha_2 & \beta_3 & 0 & \cdots\\
0 & \alpha_4 & \beta_5 & \cdots\\
\vdots & \vdots & \vdots & \ddots
\end{bmatrix},
\]
and
\[
B=
\begin{bmatrix}
\alpha_1 & \beta_2 & 0 & \cdots\\
0 & \alpha_3 & \beta_4 & \cdots\\
0 & 0 & \alpha_5 & \cdots\\
\vdots & \vdots & \vdots & \ddots
\end{bmatrix}.
\]
Thus
\[
T-\lambda I=
\begin{bmatrix}
0 & A\\
B & \widetilde D_2
\end{bmatrix},
\]
where $\widetilde D_2=D_2-\lambda I$.

\noindent{\bf Step 2.} Write
\[
A=D_A+V_A,
\]
where $D_A$ is the diagonal part of $A$ and $V_A$ is its lower triangular part. Consider the operator
\[
W_A:\bigvee\{e_0,e_2,e_4,\dots\}\longrightarrow \bigvee\{e_1,e_3,e_5,\dots\}
\]
given, with respect to the bases $(e_0,e_2,e_4,\dots)$ and $(e_1,e_3,e_5,\dots)$, by
\[
W_A=
\begin{bmatrix}
0 & \frac{1}{\alpha_2} & 0 & \cdots\\
0 & 0 & \frac{1}{\alpha_4} & \cdots\\
0 & 0 & 0 & \ddots\\
\vdots & \vdots & \vdots & \ddots
\end{bmatrix}.
\]
Then $W_AV_A=I$. Hence
\[
A=D_A+V_A
=D_AW_AV_A+V_A
=(D_AW_A+I)V_A.
\]
Since, with respect to the basis $(e_0,e_2,e_4,\dots)$,
\[
D_AW_Ae_{2k}=\frac{\beta_{2k-1}}{\alpha_{2k}}e_{2k-2}\quad(k\ge1),
\qquad
D_AW_Ae_0=0,
\]
the operator $D_AW_A$ is a weighted backward shift with weights $\frac{\beta_1}{\alpha_2},\frac{\beta_3}{\alpha_4},\frac{\beta_5}{\alpha_6},\dots$; in particular,
\[
\|D_AW_A\|=\sup_{k\ge1}\frac{|\beta_{2k-1}|}{|\alpha_{2k}|}
\le\frac{\sup_{k\ge1}|\beta_{2k-1}|}{\inf_{k\ge1}|\alpha_{2k}|}<1
\]
by the assumption. Therefore $D_AW_A+I$ is invertible. Since the weights $\alpha_2,\alpha_4,\dots$ of the backward shift $V_A$ are bounded below by $\inf_{k\ge1}|\alpha_{2k}|>0$, the operator $V_A$ is left invertible, while $V_A$ itself is not invertible; as $A=(D_AW_A+I)V_A$ with $D_AW_A+I$ invertible, it follows that the same holds for $A$. Thus $A^*$ is surjective and has a nontrivial kernel.

Consequently the operator
\[
(T-\lambda I)^*=
\begin{bmatrix}
0 & B^*\\
A^* & \widetilde D_2^*
\end{bmatrix}
\]
has a nontrivial kernel: for every $0\neq u_0\in\ker A^*$,
\[
(T-\lambda I)^*
\begin{bmatrix}
u_0\\
0
\end{bmatrix}
=0.
\]
Consequently $\ran(T-\lambda I)$ is not dense in $\mathcal H$. Since $\ran(T-\lambda I)$ is invariant under $T$ and nonzero, its closure is a nontrivial closed invariant subspace of $T$. Hence $T$ is intransitive.

\bigskip

\noindent{\bf Step 3.} Now assume additionally that
\[
\frac{\sup_{k\ge1}|\beta_{2k}|}{\inf_{k\ge0}|\alpha_{2k+1}|}<1.
\]
Write
\[
B^*=D_B^*+V_B^*,
\]
where $D_B^*$ is the diagonal part of $B^*$ and $V_B^*$ is its lower triangular part. Since
\[
B^*=D_B^*\bigl(I+(D_B^*)^{-1}V_B^*\bigr)
\]
and
\[
\|(D_B^*)^{-1}V_B^*\|
\le
\frac{\sup_{k\ge1}|\beta_{2k}|}{\inf_{k\ge0}|\alpha_{2k+1}|}<1,
\]
the operator $I+(D_B^*)^{-1}V_B^*$ is invertible, and so is $B^*$.

Fix $0\neq u_0\in\ker A^*$ and put
\[
x_0=
\begin{bmatrix}
u_0\\
0
\end{bmatrix},
\]
so that $(T-\lambda I)^*x_0=0$.

We construct a sequence $\{x_n\}_{n=0}^{\infty}$ of nonzero vectors such that
\[
(T-\lambda I)^*x_{n+1}=x_n,\qquad n\ge0.
\]
Suppose that $x_n=(u_n,v_n)^T\neq0$ has been chosen, and set
\[
v_{n+1}=(B^*)^{-1}u_n.
\]
Since $A^*$ is surjective, we may choose $u_{n+1}$ satisfying
\[
A^*u_{n+1}=v_n-\widetilde D_2^*v_{n+1}.
\]
If $v_n-\widetilde D_2^*v_{n+1}\neq0$, then necessarily $u_{n+1}\neq0$; if $v_n-\widetilde D_2^*v_{n+1}=0$, choose any nonzero $u_{n+1}\in\ker A^*$. In either case,
\[
x_{n+1}=
\begin{bmatrix}
u_{n+1}\\
v_{n+1}
\end{bmatrix}
\neq0.
\]
Then
\[
(T-\lambda I)^*x_{n+1}
=
\begin{bmatrix}
0 & B^*\\
A^* & \widetilde D_2^*
\end{bmatrix}
\begin{bmatrix}
u_{n+1}\\
v_{n+1}
\end{bmatrix}
=
\begin{bmatrix}
B^*v_{n+1}\\
A^*u_{n+1}+\widetilde D_2^*v_{n+1}
\end{bmatrix}
=
\begin{bmatrix}
u_n\\
v_n
\end{bmatrix}
=x_n.
\]
Thus the required sequence exists.

For $n\ge0$, let
\[
\mathfrak N_n=\bigvee\{x_0,x_1,\dots,x_n\}.
\]
Since
\[
T^*x_{n+1}=\overline{\lambda}x_{n+1}+x_n\quad(n\ge0)
\qquad\text{and}\qquad
T^*x_0=\overline{\lambda}x_0,
\]
each $\mathfrak N_n$ is invariant under $T^*$.

\noindent{\bf Step 4.} We claim that these subspaces form a strictly increasing chain:
\[
\mathfrak N_0\subsetneq \mathfrak N_1\subsetneq \mathfrak N_2\subsetneq\cdots.
\]
Suppose, to the contrary, that $x_{n+1}\in\mathfrak N_n$ for some $n$, say
\[
x_{n+1}=\sum_{k=0}^{n}c_kx_k.
\]
Applying $(T-\lambda I)^*$ and using
\[
(T-\lambda I)^*x_0=0,\qquad
(T-\lambda I)^*x_j=x_{j-1}\quad(j\ge1),
\]
we obtain
\[
x_n=(T-\lambda I)^*x_{n+1}
=\sum_{k=1}^{n}c_kx_{k-1}
\in\mathfrak N_{n-1}.
\]
Repeating this argument yields $x_1\in\mathfrak N_0$. But this is impossible: every vector in $\mathfrak N_0=\bigvee\{x_0\}$ has second component $0$, since $x_0=(u_0,0)^T$, whereas
\[
x_1=
\begin{bmatrix}
u_1\\
v_1
\end{bmatrix}
\quad\text{with}\quad v_1=(B^*)^{-1}u_0\neq0.
\]
Therefore
\[
\mathfrak N_0\subsetneq \mathfrak N_1\subsetneq \mathfrak N_2\subsetneq\cdots.
\]

Finally, set
\[
\mathfrak M_n=\mathfrak N_n^\perp,\qquad n\ge0.
\]
Since each $\mathfrak N_n$ is invariant under $T^*$, each $\mathfrak M_n$ is invariant under $T$; and since the inclusions in the chain above are strict, so are those in
\[
\mathfrak M_0\supsetneq \mathfrak M_1\supsetneq \mathfrak M_2\supsetneq\cdots.
\]
This proves the theorem.
\end{proof}


\bibliographystyle{amsplain}

\end{document}